\documentclass[a4paper,11pt]{amsart}

\usepackage{amssymb}
\usepackage[all]{xy}
\usepackage[english]{babel}
\usepackage[latin1]{inputenc}
\usepackage{amsfonts}
\usepackage{amsthm}
\usepackage{amsmath}
\usepackage{amsfonts}
\usepackage{latexsym}
\usepackage{amssymb}
\usepackage{mathrsfs}
\usepackage[usenames]{color}

\newcommand{\pint}[1]{\displaystyle \left \langle #1 \right\rangle}

\newtheorem{lem}{Lemma}[section]
\newtheorem{thm}[lem]{Theorem}
\newtheorem{cor}[lem]{Corollary}
\newtheorem{rem}[lem]{Remark}
\newtheorem{rems}[lem]{Remarks}
\newtheorem{exa}[lem]{Example}
\newtheorem{pro}[lem]{Proposition}
\newtheorem{defi}[lem]{Definition}

\newcommand{\eme}{\mathcal{M}}
\newcommand{\ene}{\mathcal{N}}
\newcommand{\ete}{\mathcal{T}}

\newcommand{\cP}{\mathcal{P}}
\newcommand{\cI}{\mathcal{I}}

\begin{document}

\title[Proper splittings and reduced solutions of matrix equations]{Proper splittings and reduced solutions of matrix equations}

\author{M. Laura Arias}
\address[M. Laura Arias]{Departamento de Matem\'atica, Facultad de Ingenier\'ia, Universidad de Buenos Aires, Argentina}
\address{
and}
\address{
Instituto Argentino de Matem\'atica-CONICET\\
Saavedra 15, Piso 3 (1083), Buenos Aires, Argentina}

\email{lauraarias@conicet.gov.ar}

\author{M. Celeste Gonzalez}
\address[M.Celeste Gonzalez]{Instituto de Ciencias, Universidad Nacional de General Sarmiento, Los Polvorines, Buenos Aires, Argentina}
\address{
and}
\address{
Instituto Argentino de Matem\'atica-CONICET\\
Saavedra 15, Piso 3 (1083), Buenos Aires, Argentina}
\email{celeste.gonzalez@conicet.gov.ar}

\thanks{\emph{Corresponding author.} M. Laura Arias (lauraarias@conicet.gov.ar)}
\thanks{\emph{Acknowledgments.} M. Laura Arias was supported in part by FONCYT (PICT 2017-0883) and UBACyT (20020190100330BA). M. Celeste Gonzalez was supported in part by FONCYT (PICT 2017-0883). }

\subjclass[2010]{15A06, 65F10, 15B48}

\date{}

\keywords{iterative processes, proper splittings, matrix equations}


\begin{abstract} In this article we apply proper splittings of matrices to develop an  iterative process to approximate solutions of matrix equations of the form $TX=W.$ Moreover, by using the partial order
induced by  positive semidefinite matrices, we obtain equivalent conditions to the convergence of this process. We also include some speed comparison results of the convergence of this method.  In addition, for all matrix $T$ we propose a proper splitting based on the polar decomposition of $T$.
\end{abstract}

\maketitle

\section{Introduction}

The theory of splittings of matrices is a useful tool for the construction of iterative methods for solving systems of linear equations. Roughly speaking, a splitting of a matrix $T\in M^{m\times n}(\mathbb{C})$ is a partition of $T$ of the form: $T=U-V$ with  $U, V \in M^{m\times n}(\mathbb{C})$.  
A pioneering work regarding splitting of matrices is due to Varga \cite{MR0158502} where the concept of regular splitting for invertible matrices  $T\in M^{n}(\mathbb{R})$ was introduced  to approximate the unique solution $T^{-1}w$ of the system $Tx=w$ by means of the following iterative process: 
\begin{equation}\label{iterativo-inversible}
x^{i+1}= U^{-1} Vx^{i}+U^{-1} w.
\end{equation}
The scheme (\ref{iterativo-inversible}) converges for every $x^0$ if and only if the spectral radius of $U^{-1}V$  is less than one (in symbols, $\rho(U^{-1} V)<1$) and, in such case, it converges  to $T^{-1}w$. Later, Berman and Plemmons \cite{MR348984} extended the notion of regular splitting for $T\in M^{m\times n}(\mathbb{R})$. They called a proper splitting of $T\in M^{m\times n}(\mathbb{R})$ to a partition $T=U-V$ where $U,V\in M^{m\times n}$, $\mathcal{R}(U)=\mathcal{R}(T)$ and $\mathcal{N}(U)=\mathcal{N}(T)$. Here, $\mathcal{R}(T)$ and $\mathcal{N}(T)$ denote the range and nullspace of $T$, respectively. Based on this concept, Berman and Plemmons proved that the next iterative process
\begin{equation}\label{iterativo-no-inversible}
x^{i+1}= U^\dagger Vx^{i}+U^\dagger w
\end{equation}
converges to the best least square approximate solution $T^\dagger w$ of $Tx=w$  for every $x^0$ if and only if $\rho(U^{\dagger} V)<1$. 

In view of the above, during the last years several authors dealt with the problems of finding conditions on the matrices $U, V$ (of the splitting) which ensure that $\rho(U^{-1} V)<1$ (or, $\rho(U^{\dagger} V)<1$) and of comparing the speed of convergence of these methods for different classes of splittings and matrices. We mention some of these classes:  regular splitting \cite{MR0158502}, \cite{MR745083}, nonnegative splitting with first and second type \cite{MR1113154}, \cite{MR1930390}, weak regular splitting with first and second type \cite{MR1286436}, \cite{MR1628383}, \cite{MR1969059}, proper splitting \cite{MR3141710}, \cite{MR348984}, $P$-regular splitting \cite{MR0075677}, \cite{John}, $P$-proper splitting \cite{MR3671533}, weak nonnegative splitting of the first and the second type \cite{MR1695402},  $T$ monotone \cite{MR0158502}, \cite{MR1286436}, \cite{MR1695402}, $T$ positive definite \cite{MR1368073}, $T$ positive semidefinite  \cite{MR3671533}, among many others.  All these articles are based on working with nonnegative matrices or  positive definite matrices and the usual partial orders induced by them. In the case of considering nonnegative matrices, the results follow by the Perron-Frobenius theory. In \cite{MR2198937}, the authors unified the theory for these two partial orders and for the scheme (\ref{iterativo-inversible})  by using the partial order induced by positivity cone of matrices. However, the theory for the scheme (\ref{iterativo-no-inversible}) requires a deeper analysis. A first approach is given in \cite{MR348984}, where under the idea of proper splitting and considering matrices that leave a cone invariant, many results as those of Varga \cite{MR0158502}, Ortega and Rheinboldt \cite{MR215487}, Mangasarian \cite{MR266409} and Vandergraft \cite{MR309971}  are extended to scheme (\ref{iterativo-no-inversible}). In fact, nonnegative matrices leave the cone $\mathbb{R}_+^n$ invariant and so applying the ideas of \cite{MR348984} many of the results about scheme (\ref{iterativo-inversible}) and nonnegative matrices are immediately extended for (\ref{iterativo-no-inversible}). However, these ideas can not be applied when the partial order induced by  positive semidefinite matrices, also known as L\"owner order,  is considered. One of our goal in this article is to fill this gap. Moreover, we observe that proper splitting can also be applied to solve matrix equations of the form $TX=W$ and to get their so-called reduced solutions. This sort of solutions emerges as a generalization of the well-known Douglas'solutions \cite{MR0203464} and are useful, for example, in many problems of engineering and physic in which it is needed to separate signals from noise (see \cite{286957}, \cite{MR2512471}, among others).  In this article, given a proper splitting $T=U-V,$ we propose the iterative method (\ref{proceso iterativo})  and we prove that it converges to a reduced solution of $TX=W$ if and only if $\rho(U^\dagger V)<1.$ In addition, we provide equivalent conditions for  (\ref{proceso iterativo}) to converge to an Hermitian or positive definite reduced solution. Motivated by all these facts, we devote the remain of this paper to provide, for a proper splitting $T=U-V$, equivalent conditions to  $\rho(U^\dagger V)<1$  and some speed comparison results, considering the L\"owner order. In addition, we define and study a proper splitting which is based on the polar decomposition of matrices. 

We briefly describe the contents of the paper. In Section \ref{preliminaries}  we fix the notation and give some results that are needed in the following sections. In Section \ref{redsolutions}  we study some spectral properties of the so-called reduced solutions of matrix equations $TX=W$ and we analyse the  existence of Hermitian and positive semidefinite reduced solutions. In Section \ref{propersplittingmatrices}  we study some properties of proper splitting that will be used in the next sections. Section \ref{RsandProper} is entirely devoted to explore the relationship between proper splitting and reduced solutions. We propose the iterative process (\ref{proceso iterativo}) with the aim of approximate the reduced solutions of $TX=W$ by means of the use of a proper splitting of $T$. In addition, we analyse the convergence of this iterative process to Hermitian and positive definite solutions. Here the main result is Theorem \ref{convergencia}. In Section  \ref{convergenciaps}  we present our main results regarding convergence and speed comparison for the iterative method (\ref{proceso iterativo}) when the L\"owner order is considered. The main results in this section are Propositions \ref{equiTdaggerV}  and \ref{H-rho-CS} and Theorems \ref{comp1} and \ref{comp2}. Finally, in Section \ref{Sec-Ejemplos} we present a particular proper splitting  for $T\in\mathbb{M}^{m\times n} (\mathbb{C})$ based on the polar decomposition of $T$. In Proposition \ref{polar splitting} we analyse the convergence of this particular splitting. In Theorem \ref{PP} and Proposition \ref{normales} we study this splitting for the class of matrices  $T\in M^{n\times n}(\mathbb{C})$ that can be factorized as the product of two orthogonal projections and for normal matrices.

\section{Preliminaries}\label{preliminaries}

Along this article the set of $n$-uples of complex numbers is denoted by $\mathbb{C}^n$  and $M^{m\times n}(\mathbb{C})$ denotes the set of $m\times n$-matrices with coefficients in $\mathbb{C}$. If $m=n$ we abbreviate $M^{n}(\mathbb{C})$. Throughout, we shall consider the canonical inner product, $\pint{ \ , \ }$, on $\mathbb{C}^n$ and $\|\cdot\|=\pint{\cdot,\cdot}^{1/2}$ the norm induced by it. The spectral norm in $M^{n}(\mathbb{C})$ will be denoted by $\| \ \|$. Given $T\in M^{m\times n}(\mathbb{C})$ the symbols $\sigma(T)$ and $\rho(T)$ stand for the spectrum (set of eigenvalues) of $T$ and spectral radius of $T$, respectively. In addition, given $T\in M^{m\times n}(\mathbb{C})$ we denote by $\mathcal{R}(T)$,  $\mathcal{N}(T)$, $r(T)$ , $T^*$, the range, the nullspace, the rank  and complex conjugate transpose of $T$, respectively.  The Moore Penrose inverse of $T\in M^{m\times n}(\mathbb{C})$ is denoted by $T^\dagger$ and it is the unique matrix in $M^{n\times m}(\mathbb{C})$ which satisfies the four ``Moore-Penrose equations''
\begin{equation}\label{mp-equations}
TXT=T, \  \   XTX=X, \  \   XT=(XT)^*, \  \ TX=(TX)^*.
\end{equation}

The next classical result of Greville \cite{MR210720} related to the reverse order law for the Moore-Penrose inverse will be used along the article. 

\begin{thm}\label{Greville}
Let $T_1,T_2\in M^n(\mathbb{C})$. Then, $(T_1 T_2)^\dagger = T_2^\dagger T_1^\dagger$ if and only if $\mathcal{R}(T_1^*T_1T_2)\subseteq \mathcal{R}(T_2)$ and $\mathcal{R}(T_2T_2^*T_1^*)\subseteq \mathcal{R}(T_1^*)$. 
\end{thm}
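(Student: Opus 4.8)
The plan is to reduce the statement to the defining property of the Moore--Penrose inverse and then to re-express the two range inclusions as commutation relations between orthogonal projections. First I would set $T=T_1T_2$ and $X=T_2^\dagger T_1^\dagger$. By the uniqueness built into the four equations (\ref{mp-equations}), proving $(T_1T_2)^\dagger=T_2^\dagger T_1^\dagger$ is the same as checking that $X$ satisfies $TXT=T$, $XTX=X$, $(XT)^*=XT$ and $(TX)^*=TX$. So the whole theorem becomes the assertion that $X$ satisfies these four identities if and only if the two inclusions hold.

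The crux is a reformulation of the hypotheses. Writing $P:=T_2T_2^\dagger$ and $Q:=T_1^\dagger T_1$ for the orthogonal projections onto $\mathcal{R}(T_2)$ and $\mathcal{R}(T_1^*)$, I claim
\[ \mathcal{R}(T_1^*T_1T_2)\subseteq\mathcal{R}(T_2)\iff PT_1^*T_1=T_1^*T_1P,\qquad \mathcal{R}(T_2T_2^*T_1^*)\subseteq\mathcal{R}(T_1^*)\iff QT_2T_2^*=T_2T_2^*Q. \]
Each equivalence is short: the inclusion on the left says $PT_1^*T_1T_2=T_1^*T_1T_2$; multiplying on the right by $T_2^\dagger$ gives $PT_1^*T_1P=T_1^*T_1P$, whose left-hand side is self-adjoint, forcing $T_1^*T_1P=PT_1^*T_1$, and conversely $PT_1^*T_1=T_1^*T_1P$ together with $PT_2=T_2$ returns the inclusion. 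The second equivalence is identical after replacing $P$ by $Q$ and using $QT_1^*=T_1^*$. Thus the theorem is equivalent to: all four Moore--Penrose equations hold if and only if $P$ commutes with $T_1^*T_1$ and $Q$ commutes with $T_2T_2^*$.

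For the implication ($\Leftarrow$) I would exploit the symmetry $(T_1,T_2)\mapsto(T_2^*,T_1^*)$: it interchanges the two commutation conditions, and since $(M^*)^\dagger=(M^\dagger)^*$ it turns $TXT=T$ into $XTX=X$ and $(XT)^*=XT$ into $(TX)^*=TX$. Hence it suffices to derive $TXT=T$ and $(XT)^*=XT$ from $QT_2T_2^*=T_2T_2^*Q$, the remaining two equations then following by symmetry. From $Q$ commuting with the Hermitian matrix $T_2T_2^*$ one gets that $Q$ commutes with the spectral projection of $T_2T_2^*$ onto its range, which is exactly $P$; then $TXT=T_1PQT_2=T_1QPT_2=T_1QT_2=T_1T_2=T$, using $PT_2=T_2$ and $T_1Q=T_1$. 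For $(XT)^*=XT$ I would write $XT=T_2^\dagger QT_2$ and use $T_2^\dagger=(T_2^*T_2)^\dagger T_2^*$ to reduce self-adjointness to the commuting of $T_2^*T_2$ with $T_2^*QT_2$, which follows at once from $QT_2T_2^*=T_2T_2^*Q$.

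For ($\Rightarrow$) I would reverse these steps: assuming the four equations, reading $(TX)^*=TX$ and $TXT=T$ backwards returns $PT_1^*T_1=T_1^*T_1P$, and dually the other two equations return $QT_2T_2^*=T_2T_2^*Q$, whence the inclusions by the reformulation above. I expect the main obstacle to be exactly this converse extraction: recovering the full commutation $QT_2T_2^*=T_2T_2^*Q$ (and not merely the weaker $QP=PQ$) from the self-adjointness of $XT$ and $TX$ requires combining two of the equations and carefully tracking the subspaces on which the various pseudoinverses act.
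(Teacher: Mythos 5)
A preliminary remark: the paper does not prove this theorem at all --- it is quoted from Greville \cite{MR210720} --- so your argument must stand on its own. Your reformulation of the two inclusions as the commutation relations $PT_1^*T_1=T_1^*T_1P$ and $QT_2T_2^*=T_2T_2^*Q$ is correct, and your derivation of $TXT=T$ and $(XT)^*=XT$ from the second one is sound: in finite dimensions $P$ is a polynomial in $T_2T_2^*$, so $QP=PQ$ follows, and the factorization $XT=(T_2^*T_2)^\dagger\,(T_2^*QT_2)$ exhibits $XT$ as a product of commuting Hermitian matrices. But the symmetry you invoke is misidentified. Under $(T_1,T_2)\mapsto(T_2^*,T_1^*)$ one has $T\mapsto T^*$ and $X\mapsto X^*$, so the equation $TXT=T$ is carried to $T^*X^*T^*=T^*$, i.e.\ to \emph{itself}, not to $XTX=X$; the star-symmetry only exchanges the two Hermitian conditions. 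As written, your plan establishes $TXT=T$, $(XT)^*=XT$ and $(TX)^*=TX$, but never $XTX=X$, and that equation is genuinely independent of the other three (take $T=X^*=\mathrm{diag}(1,0)$ perturbed by $X=\mathrm{diag}(1,c)$, $c\neq 0$). The repair is easy: either use instead the symmetry $(T_1,T_2)\mapsto(T_2^\dagger,T_1^\dagger)$, which does exchange $TXT=T$ with $XTX=X$ (at the cost of checking that a projection commutes with a Hermitian matrix iff it commutes with its Moore--Penrose inverse), or simply compute $XTX=T_2^\dagger QP\,T_1^\dagger=T_2^\dagger PQ\,T_1^\dagger=X$ once $QP=PQ$ is in hand. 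So the sufficiency half is fixable but, as stated, incomplete.

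The more serious gap is the necessity half, which you only gesture at. ``Reading the equations backwards'' cannot work as described, because your forward derivation \emph{consumed} the commutation $QP=PQ$; in the converse direction that commutation is not a hypothesis but the main thing to be extracted. Concretely, sandwiching $(TX)^*=TX$ between $T_1^*$ and $T_1$ yields only $T_1^*T_1PQ=QPT_1^*T_1$, with the projections stuck on one side --- not $PT_1^*T_1=T_1^*T_1P$ --- and multiplying $TXT=T$ by $T_1^\dagger$ and $T_2^\dagger$ yields only that $QP$ is idempotent. To close the argument one needs a further idea, for instance: since $\|QP\|\leq 1$, an idempotent contraction is an orthogonal projection, so $QP=(QP)^*=PQ$; then $(TX)^*=TX$ gives $T_1^*T_1(PQ)=(PQ)T_1^*T_1$, whence $PT_1^*T_1P=T_1^*T_1P$ and, right-multiplying by $T_2$, the inclusion $\mathcal{R}(T_1^*T_1T_2)\subseteq\mathcal{R}(T_2)$; the second inclusion follows dually from $(XT)^*=XT$. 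None of this appears in your proposal --- you yourself flag the extraction as an unresolved obstacle --- so the converse direction remains a plan rather than a proof.
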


Henceforth, we denote by $M^n_H(\mathbb{C})$ the set of Hermitian matrices of $M^n(\mathbb{C})$ and by $M_+^n(\mathbb{C})$ the set of positive semidefinite matrices, i.e.,  $T\in M_+^n(\mathbb{C})$ if $\pint{Tx,x}\geq 0$ for all $x\in\mathbb{C}^n$. Moreover, given $T_1,T_2\in M^n_H(\mathbb{C})$, we say that $T_1\leq T_2$ if $T_2-T_1\in M_+^n(\mathbb{C})$, i.e., $\leq$ denotes the L\"owner order of matrices.

The following result  will be frequently used in these notes. The proof  can be found in \cite[Corollary 2]{MR1048801}.

\begin{thm}\label{lowner-mp}Let $T_1,T_2\in M_+^n(\mathbb{C})$ and consider the following conditions 
\begin{enumerate}
\item $T_1\leq T_2$;
\item $T_2^\dagger \leq T_1^\dagger$;
\item[$c_1)$] $r(T_1)=r(T_2)$;
\item[$c_2)$] $\mathcal{R}(T_2-T_1)\cap\mathcal{N}(T_2)=\mathcal{R}(T_1^\dagger-T_2^\dagger)\cap\mathcal{N}(T_1)=\{0\}$.
\end{enumerate}
Then any of two the conditions $a), b)$ and $ c_i)$ imply the third condition, $i=1,2$.
\end{thm}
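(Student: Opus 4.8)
The plan is to reduce everything to two facts about positive semidefinite matrices: that the L\"owner order controls ranges, and that on a fixed subspace inversion is order-reversing. Throughout I would use that for $T\in M_+^n(\mathbb{C})$ one has $\mathcal{R}(T)=\mathcal{N}(T)^\perp$, that $T^\dagger\in M_+^n(\mathbb{C})$ with $\mathcal{R}(T^\dagger)=\mathcal{R}(T)$, $\mathcal{N}(T^\dagger)=\mathcal{N}(T)$, $r(T^\dagger)=r(T)$, $(T^\dagger)^\dagger=T$, and that $\langle Tx,x\rangle=0$ forces $Tx=0$. A first consequence is that $a)$ already gives $\mathcal{N}(T_2)\subseteq\mathcal{N}(T_1)$, equivalently $\mathcal{R}(T_1)\subseteq\mathcal{R}(T_2)$: if $T_2x=0$ then $0\le\langle T_1x,x\rangle\le\langle T_2x,x\rangle=0$. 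Dually, applying this to $T_2^\dagger\le T_1^\dagger$ shows $b)$ gives $\mathcal{R}(T_2)\subseteq\mathcal{R}(T_1)$.

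The analytic core is the following: if $0\le T_1\le T_2$ and $\mathcal{R}(T_1)=\mathcal{R}(T_2)=:\mathcal{R}$, then $T_2^\dagger\le T_1^\dagger$. I would prove this by compressing to $\mathcal{R}$: since $\mathcal{R}$ reduces both matrices, the restrictions $T_i|_{\mathcal{R}}$ are positive definite operators on $\mathcal{R}$ with $T_1|_{\mathcal{R}}\le T_2|_{\mathcal{R}}$, and order-reversal of inversion (conjugating $I\le T_1|_{\mathcal{R}}^{-1/2}T_2|_{\mathcal{R}}T_1|_{\mathcal{R}}^{-1/2}$) gives $(T_2|_{\mathcal{R}})^{-1}\le(T_1|_{\mathcal{R}})^{-1}$; as each $T_i^\dagger$ equals this inverse on $\mathcal{R}$ and vanishes on $\mathcal{R}^\perp=\mathcal{N}(T_i)$, the inequality lifts to $\mathbb{C}^n$, and the converse follows by applying this to the pair $T_2^\dagger\le T_1^\dagger$. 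The equivalences involving $c_1)$ are then immediate: the two range inclusions above yield $a)+b)\Rightarrow\mathcal{R}(T_1)=\mathcal{R}(T_2)\Rightarrow c_1)$; $a)+c_1)$ forces $\mathcal{R}(T_1)=\mathcal{R}(T_2)$ (inclusion plus equal dimension), so the core lemma gives $b)$; and $b)+c_1)\Rightarrow a)$ follows by the same argument applied to $T_1^\dagger,T_2^\dagger$, using $r(T_i^\dagger)=r(T_i)$.

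For $c_2)$ I would show it is equivalent, in the presence of $a)$ (resp.\ $b)$), to the range equality $\mathcal{R}(T_1)=\mathcal{R}(T_2)$, after which the $c_1)$ arguments close every implication. First, $a)$ makes the left-hand intersection automatic, since $\mathcal{R}(T_2-T_1)\subseteq\mathcal{R}(T_2)=\mathcal{N}(T_2)^\perp$; dually $b)$ makes the right-hand intersection automatic. The substance is the single implication ``$0\le T_1\le T_2$ and $\mathcal{R}(T_1^\dagger-T_2^\dagger)\cap\mathcal{N}(T_1)=\{0\}$ imply $\mathcal{R}(T_1)=\mathcal{R}(T_2)$'' (the version needed for $b)+c_2)\Rightarrow a)$ being its image under $(T_1,T_2)\mapsto(T_2^\dagger,T_1^\dagger)$). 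Since $E:=T_1^\dagger-T_2^\dagger$ is Hermitian, the hypothesis reads $\mathcal{N}(E)+\mathcal{R}(T_1)=\mathbb{C}^n$. I would write $T_1,T_2$ in block form adapted to $\mathcal{R}(T_1)\oplus\mathcal{W}\oplus\mathcal{R}(T_2)^\perp$, with $\mathcal{W}=\mathcal{R}(T_2)\cap\mathcal{N}(T_1)$ and blocks $F=T_1|_{\mathcal{R}(T_1)}$ and $\begin{pmatrix}A&B\\B^{*}&C\end{pmatrix}=T_2|_{\mathcal{R}(T_2)}$, compute $\mathcal{N}(E)\cap\mathcal{R}(T_2)$, and use the Schur-complement identity $\tilde A-\tilde B\tilde C^{-1}\tilde B^{*}=A^{-1}$ for the blocks of $T_2^\dagger$ to reduce its defining equations to: the $\mathcal{W}$-component of $\mathcal{N}(E)$ equals $B^{*}(\mathcal{N}(A-F))$.

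The decisive point, and the step I expect to be the main obstacle, is the positivity estimate: $T_2-T_1\ge0$ forces $A-F\ge BC^{-1}B^{*}$, so every $q\in\mathcal{N}(A-F)$ satisfies $0=\langle(A-F)q,q\rangle\ge\langle BC^{-1}B^{*}q,q\rangle=\|C^{-1/2}B^{*}q\|^2\ge0$, whence $B^{*}q=0$. Thus the $\mathcal{W}$-component of $\mathcal{N}(E)$ is always $\{0\}$, so $\mathcal{N}(E)+\mathcal{R}(T_1)=\mathbb{C}^n$ can hold only when $\mathcal{W}=\{0\}$, i.e.\ $\mathcal{R}(T_1)=\mathcal{R}(T_2)$. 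Everything outside this computation is bookkeeping with ranges and the order-reversal of inversion; extracting range equality from the intersection condition in $c_2)$ is what genuinely requires the block/Schur analysis and the cross-term estimate $B^{*}q=0$. I would verify the degenerate cases ($T_1=0$, or $\mathcal{R}(T_2)=\mathbb{C}^n$) separately to be safe.
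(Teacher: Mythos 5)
Your proposal is correct, but strictly speaking there is no proof in the paper to compare it with: Theorem \ref{lowner-mp} is stated with a pointer to \cite[Corollary 2]{MR1048801}, where it arises as a special case of a general L\"owner-antitonicity theorem for generalized inverses of Hermitian (not merely positive semidefinite) matrices, so your self-contained argument is necessarily a different route. I checked its two substantive components and both are sound. The core lemma (for equal ranges, compress to $\mathcal{R}$, use antitonicity of inversion for positive definite operators, and extend by zero on $\mathcal{R}^\perp=\mathcal{N}(T_i)$), combined with the observations that $a)$ gives $\mathcal{R}(T_1)\subseteq\mathcal{R}(T_2)$ and $b)$ the reverse inclusion, settles all implications involving $c_1)$ and reduces the $c_2)$ statements to the single implication ``$0\leq T_1\leq T_2$ and $\mathcal{R}(T_1^\dagger-T_2^\dagger)\cap\mathcal{N}(T_1)=\{0\}$ imply $\mathcal{R}(T_1)=\mathcal{R}(T_2)$'', since the substitution $(T_1,T_2)\mapsto(T_2^\dagger,T_1^\dagger)$ interchanges the two intersections in $c_2)$ exactly as you claim. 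For that implication your block analysis verifies: writing $\mathbb{C}^n=\mathcal{R}(T_1)\oplus\mathcal{W}\oplus\mathcal{R}(T_2)^\perp$ with $\mathcal{W}=\mathcal{R}(T_2)\cap\mathcal{N}(T_1)$, $F=T_1|_{\mathcal{R}(T_1)}$ and $\bigl(\begin{smallmatrix}A&B\\B^*&C\end{smallmatrix}\bigr)=T_2|_{\mathcal{R}(T_2)}$, the standard block-inverse formulas give $\mathcal{N}(T_1^\dagger-T_2^\dagger)\cap\mathcal{R}(T_2)=\{(Fu,B^*u):u\in\mathcal{N}(A-F)\}$, while the Schur-complement form of $T_2-T_1\geq0$ yields $A-F\geq BC^{-1}B^*\geq0$, forcing $B^*u=0$ for every $u\in\mathcal{N}(A-F)$; hence $\mathcal{N}(T_1^\dagger-T_2^\dagger)\subseteq\mathcal{R}(T_1)\oplus\mathcal{R}(T_2)^\perp$, and the hypothesis $\mathcal{N}(T_1^\dagger-T_2^\dagger)+\mathcal{R}(T_1)=\mathbb{C}^n$ indeed forces $\mathcal{W}=\{0\}$. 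As for what each route buys: the citation inherits the greater generality of the Hermitian setting, whereas your proof is elementary, stays within $M^n_+(\mathbb{C})$ (the only case the paper actually uses, e.g.\ in Proposition \ref{equiTdaggerV} and Theorems \ref{comp1} and \ref{comp2}), and exposes the structure of the statement---under $a)$ the first intersection in $c_2)$ is automatic and the second is equivalent to $\mathcal{R}(T_1)=\mathcal{R}(T_2)$, which makes transparent why $c_1)$ and $c_2)$ are interchangeable.
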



\section{Reduced solutions of matrix equations}\label{redsolutions}

In this section we focus on the study of matrix equations of the form $TX=W$ for $T\in M^{m\times n}(\mathbb{C})$ and $W\in M^{m\times r}(\mathbb{C})$. More precisely, we are interested on the so-called ``reduced solutions'' of these equations introduced in the next result.

\begin{thm}\label{Douglas}
Let $T\in M^{m\times n}(\mathbb{C})$ and $W\in M^{m\times r}(\mathbb{C})$.  There exists $X_0\in M^{n\times r}(\mathbb{C})$ such that $TX_0=W$ if and only if $\mathcal{R}(W)\subseteq \mathcal{R}(T).$ In such case, for each subspace $\eme$ such that $\eme\overset{.}{+}\mathcal{N}(T)=\mathbb{C}^n$ there exists a unique $X_\eme\in  M^{n\times r}(\mathbb{C}) $ such that $TX_\eme=W$ and $\mathcal{R}(X_\eme)\subseteq \eme.$ Moreover, $\mathcal{N}(X_\eme)=\mathcal{N}(W)$. We called $X_\eme$ the {\bf reduced solution} for $\eme$ of $TX=W.$ In particular, it holds that $X_{\mathcal{R}(T^*)}=T^\dagger W.$
\end{thm}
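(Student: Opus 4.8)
The plan is to treat the solvability equivalence first, then construct the reduced solution from a suitable oblique idempotent, and finally identify the Moore--Penrose case by uniqueness. For the first equivalence, the forward implication is immediate: if $TX_0=W$, then every column of $W$ lies in $\mathcal{R}(T)$, so $\mathcal{R}(W)\subseteq\mathcal{R}(T)$. For the converse I would take $X_0=T^\dagger W$ and use that $TT^\dagger$ is the orthogonal projection onto $\mathcal{R}(T)$; since $\mathcal{R}(W)\subseteq\mathcal{R}(T)$, this projection fixes $W$, so $TT^\dagger W=W$. This simultaneously records the identity $TT^\dagger W=W$, which I will reuse below.

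For existence and uniqueness of the reduced solution, the structural fact driving everything is that the hypothesis $\eme\overset{.}{+}\mathcal{N}(T)=\mathbb{C}^n$ makes the restriction of $T$ to $\eme$ a bijection onto $\mathcal{R}(T)$: injectivity comes from $\eme\cap\mathcal{N}(T)=\{0\}$, and surjectivity from $T(\eme)=T(\eme+\mathcal{N}(T))=\mathcal{R}(T)$. Concretely, I would let $Q$ be the idempotent with $\mathcal{R}(Q)=\eme$ and $\mathcal{N}(Q)=\mathcal{N}(T)$ (well defined precisely because the sum is direct), and set $X_\eme=QT^\dagger W$. Then $\mathcal{R}(X_\eme)\subseteq\mathcal{R}(Q)=\eme$, and because $I-Q$ maps into $\mathcal{N}(T)$ we have $TQ=T$, so $TX_\eme=TT^\dagger W=W$. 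Uniqueness is then a direct-sum argument: if $Y$ is any solution with $\mathcal{R}(Y)\subseteq\eme$, the columns of $X_\eme-Y$ lie in $\eme\cap\mathcal{N}(T)=\{0\}$, forcing $Y=X_\eme$.

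For the nullspace identity I would chase both inclusions. The inclusion $\mathcal{N}(X_\eme)\subseteq\mathcal{N}(W)$ is immediate from $W=TX_\eme$. For the reverse, if $Wx=0$ then $TX_\eme x=0$, so $X_\eme x\in\mathcal{N}(T)$; but $X_\eme x\in\mathcal{R}(X_\eme)\subseteq\eme$, whence $X_\eme x\in\eme\cap\mathcal{N}(T)=\{0\}$, giving $\mathcal{N}(W)\subseteq\mathcal{N}(X_\eme)$. Finally, for the Moore--Penrose case I would note that $\mathcal{R}(T^*)\overset{.}{+}\mathcal{N}(T)=\mathbb{C}^n$ (in fact orthogonally), so $\eme=\mathcal{R}(T^*)$ is admissible; since $\mathcal{R}(T^\dagger W)\subseteq\mathcal{R}(T^\dagger)=\mathcal{R}(T^*)$ and $T(T^\dagger W)=W$, the uniqueness already proved forces $X_{\mathcal{R}(T^*)}=T^\dagger W$.

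The only genuinely delicate point is the setup in the second paragraph: making precise that $T|_\eme$ is a bijection onto $\mathcal{R}(T)$ and choosing $Q$ so that both $\mathcal{R}(Q)=\eme$ and $TQ=T$ hold. Once that idempotent is in hand, existence, uniqueness, the nullspace identity, and the Moore--Penrose specialization all follow by the short arguments above; the remaining computations are routine.
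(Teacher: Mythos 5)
Your proof is correct and complete. Note that the paper itself offers no argument here: its ``proof'' of Theorem \ref{Douglas} is only a pointer to Douglas' original paper and to the article of Arias, Corach and Gonzalez on generalized inverses and Douglas equations, so there is no in-paper proof to compare against line by line. That said, your route is exactly the one the paper's machinery presupposes: your construction $X_\eme=Q\,T^\dagger W$, with $Q$ the idempotent satisfying $\mathcal{R}(Q)=\eme$ and $\mathcal{N}(Q)=\mathcal{N}(T)$, is precisely the relation $X_\eme=Q_{\eme//\mathcal{N}(T)}X_{\mathcal{R}(T^*)}$ that the paper records in Remark \ref{RemRed} and uses repeatedly (e.g.\ in Lemma \ref{rhoreducidas}). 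All the individual steps check out: $TQ=T$ because $\mathcal{R}(I-Q)=\mathcal{N}(T)$; uniqueness via $\eme\cap\mathcal{N}(T)=\{0\}$ applied to the columns of the difference of two solutions; both inclusions for $\mathcal{N}(X_\eme)=\mathcal{N}(W)$, where the reverse one correctly uses $\mathcal{R}(X_\eme)\subseteq\eme$; and the identification $X_{\mathcal{R}(T^*)}=T^\dagger W$ from $\mathcal{R}(T^\dagger)=\mathcal{R}(T^*)$ together with the uniqueness already established. One cosmetic remark: the bijectivity of $T|_\eme$ onto $\mathcal{R}(T)$, which you flag as the delicate point, is never actually invoked once you have the idempotent $Q$; the direct-sum hypothesis alone, through $Q$ and through $\eme\cap\mathcal{N}(T)=\{0\}$, carries the whole argument, so you could omit that observation without loss.
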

\begin{proof}
\begin{enumerate}
See  \cite{MR2407082} and \cite{MR0203464}.
\end{enumerate}
\end{proof}

The previous result also holds for $T,W$ and $X_0$ bounded linear operators on Hilbert spaces, see \cite{MR0203464}. 

\begin{rem}\label{RemRed} In this remark we comment some advantages of working with reduced solutions:
\begin{enumerate}
\item Reduced solutions and oblique projections are two concepts closely related. Indeed, the set of oblique projections with a fixed nullspace $\ete$ coincides with the set of reduced solutions of the equation $(I-P_\ete)X=I-P_\ete,$ where $P_\ete$ denotes the orthogonal projection onto $\ete.$ In a more general context, given $T, W\in M^{m\times n}(\mathbb{C})$ such that $\mathcal{R}(W)\subseteq \mathcal{R}(T)$ and $\eme,\ene$ two algebraic complements of $\mathcal{N}(T)$, then it is easy to verify that $X_\ene=Q_{\ene//\mathcal{N}(T)}X_\eme$  where $Q_{\ene//\mathcal{N}(T)}$ is the projection with range $\ene$ and nullspace $\mathcal{N}(T)$. This relationship between reduced solutions will be frequently used along the article.

Oblique projections (and so reduced solutions) appear in several problems of engineering, physic, chemistry, among other areas. For instance, they are useful for solving a variety of signal processing problems where it is needed to separate signals from noise (see \cite{286957},\cite{MR2512471}, \cite{BOYER20092547}, among other sources). Furthermore, oblique projections are used in communication problems  to  remove intersymbol interference \cite{286957} and for estimation of directions-of-arrival \cite{MR2512471}.


\item  Other class of matrix equations that frequently arises in many applications are those of the form $TXS=W$, with $T\in M^{m\times n}(\mathbb{C})$ and  $S\in M^{r\times s}(\mathbb{C}).$ Let us observe that this class of equations can be transformed into two-steps matrix equations of the previous form by means of reduced solutions. More precisely, if $Y_\eme$ is a reduced solution of $TX=W$ then $\mathcal{N}(Y_\eme)=\mathcal{N}(W)$ or, equivalently, $\mathcal{R}(Y_\eme^*)=\mathcal{R}(W^*)$ and so $\mathcal{R}(Y_\eme^*)\subseteq \mathcal{R}(S^*).$ Thus, there exists $Y^*$ such that $S^*Y^*=Y_\eme^*$. Hence, $TYS=TY_\eme=W,$ i.e., $Y$ solves $TXS=W.$ We shall return to this idea in Section \ref{RsandProper}.
\item Reduced solutions can also be applied to fully describe the generalized inverses of a matrix $T\in M^{m\times n}(\mathbb{C})$, see \cite{MR2407082}. For instance, $T^\dagger$ coincides with the reduced solution of $TX=P_{R(T)}$ for $\eme=R(T^*).$ 
\end{enumerate}
\end{rem}

\begin{lem}\label{rhoreducidas} Let $T, W\in M^{m\times n}(\mathbb{C})$ such that $\mathcal{R}(W)\subseteq \mathcal{R}(T)$ and $\mathcal{N}(T)\subseteq \mathcal{N}(W).$  Then the following assertions hold:
\begin{enumerate}
\item $\sigma (X_\eme)=\sigma(T^\dagger W)$ for all reduced solution $X_\eme$ of $TX=W$.
\item $\rho(X_\eme)=\rho(T^\dagger W)$ for all reduced solution $X_\eme$ of $TX=W$.
\item Let $X_\eme$ be the reduced solution for $\eme$ of $TX=W$ and $\lambda\in \sigma(X_\eme)$. Hence, if $v_\lambda$ is an eigenvector of $X_\mathcal{M}$ associated to  $\lambda$ then $Q_{\mathcal{N}// \mathcal{N}(T)}v_\lambda$ is an eigenvector of $X_{\mathcal{N}}$ associated to $\lambda$ for all reduced solution $X_{\mathcal{N}}$ of $TX=W$.  
\end{enumerate}
\end{lem}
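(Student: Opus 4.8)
The plan is to deduce all three assertions from a single structural fact: any two reduced solutions of $TX=W$ are \emph{similar}. Recall from Remark \ref{RemRed} a) that if $\eme,\ene$ are two algebraic complements of $\mathcal{N}(T)$ then $X_\ene=Q_{\ene//\mathcal{N}(T)}X_\eme$. Since $X_{\mathcal{R}(T^*)}=T^\dagger W$ by Theorem \ref{Douglas}, once I show that $X_\eme$ and $X_\ene$ are similar for arbitrary complements $\eme,\ene$, assertion a) follows by taking $\ene=\mathcal{R}(T^*)$, and assertion b) is then immediate, as similar matrices share their spectrum and hence their spectral radius. Note that all matrices involved are square ($n\times n$), so their spectra are well defined.

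To build the similarity I would write $E:=Q_{\eme//\mathcal{N}(T)}$ and $Q:=Q_{\ene//\mathcal{N}(T)}$ for the two projections with nullspace $\mathcal{N}(T)$ and ranges $\eme$ and $\ene$, and set $G:=Q+(I-E)$. First I would check that $G$ is invertible: it restricts to the isomorphism $Q|_\eme\colon\eme\to\ene$ and to the identity on $\mathcal{N}(T)$, so it carries $\eme\overset{.}{+}\mathcal{N}(T)=\mathbb{C}^n$ bijectively onto $\ene\overset{.}{+}\mathcal{N}(T)=\mathbb{C}^n$. Next, using $\mathcal{R}(X_\eme)\subseteq\eme$ (so $(I-E)X_\eme=0$) together with $X_\ene=QX_\eme$, I obtain $GX_\eme=QX_\eme=X_\ene$. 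The key remaining identity is $X_\ene G=X_\ene$, equivalently $X_\ene(E-Q)=0$: since $TQ=T$, for $u\in\eme$ one has $T(u-Qu)=0$, so $\mathcal{R}(E-Q)\subseteq\mathcal{N}(T)$; here the hypothesis $\mathcal{N}(T)\subseteq\mathcal{N}(W)=\mathcal{N}(X_\ene)$ enters to give $X_\ene(E-Q)=0$. Combining $GX_\eme=X_\ene$ with $X_\ene=X_\ene G$ yields $GX_\eme=X_\ene G$, that is, $X_\ene=GX_\eme G^{-1}$, proving a) and b).

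For c) I would argue directly rather than through $G$. The inclusion $\mathcal{N}(T)\subseteq\mathcal{N}(W)=\mathcal{N}(X_\eme)$ gives $X_\eme(I-Q)=0$, i.e. $X_\eme Q=X_\eme$. Hence, for an eigenvector $v_\lambda$ of $X_\eme$ with $X_\eme v_\lambda=\lambda v_\lambda$, I compute $X_\ene(Qv_\lambda)=QX_\eme Qv_\lambda=QX_\eme v_\lambda=\lambda\,Qv_\lambda$, so $Qv_\lambda$ is an eigenvector of $X_\ene$ for $\lambda$ as soon as $Qv_\lambda\neq0$. This nonvanishing is automatic when $\lambda\neq0$: if $Qv_\lambda=0$ then $v_\lambda\in\mathcal{N}(Q)=\mathcal{N}(T)\subseteq\mathcal{N}(X_\eme)$, forcing $\lambda v_\lambda=X_\eme v_\lambda=0$ and hence $\lambda=0$.

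The step I expect to be the crux is the verification of the two one-sided identities $GX_\eme=X_\ene$ and $X_\ene G=X_\ene$ (equivalently, the absorption identities used in c)); both hinge on converting the hypothesis $\mathcal{N}(T)\subseteq\mathcal{N}(W)$ into the relations $X_\eme Q=X_\eme$ and $X_\ene(E-Q)=0$. Everything else—the invertibility of $G$ and the passage from similarity to equality of spectra and spectral radii—is routine.
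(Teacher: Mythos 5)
Your proposal is correct, but for parts a) and b) it takes a genuinely different route from the paper. The paper's proof is a one-liner based on the commutation trick: writing $X_\eme=Q_{\eme//\mathcal{N}(T)}\,T^\dagger W$, it uses $\sigma(AB)=\sigma(BA)$ for square matrices to get $\sigma(X_\eme)=\sigma\bigl(T^\dagger W\,Q_{\eme//\mathcal{N}(T)}\bigr)$, and then absorbs the projection via $WQ_{\eme//\mathcal{N}(T)}=W$ (this is where $\mathcal{N}(T)\subseteq\mathcal{N}(W)$ enters), so no similarity is ever constructed. You instead build the explicit invertible map $G=Q+(I-E)$ and prove $X_\ene=GX_\eme G^{-1}$; your verifications are all sound — invertibility of $G$ via its restrictions to $\eme$ and $\mathcal{N}(T)$, the identity $GX_\eme=QX_\eme=X_\ene$ from $(I-E)X_\eme=0$, and $X_\ene G=X_\ene$ from $\mathcal{R}(E-Q)\subseteq\mathcal{N}(T)\subseteq\mathcal{N}(W)=\mathcal{N}(X_\ene)$. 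What the paper's route buys is brevity; what yours buys is strictly more information: conjugacy shows that any two reduced solutions share not just the spectrum but also algebraic and geometric multiplicities and the entire Jordan structure, which the $\sigma(AB)=\sigma(BA)$ argument does not give. For c) your computation is in substance the paper's (the paper simply says it follows from $X_\ene=Q_{\ene//\mathcal{N}(T)}X_\eme$, the point being exactly your identities $X_\eme Q=X_\eme$ and $X_\ene Qv_\lambda=\lambda Qv_\lambda$), but you add a caveat the paper silently skips: $Qv_\lambda\neq 0$ is only guaranteed when $\lambda\neq 0$, and indeed for $\lambda=0$ and $v_0\in\mathcal{N}(T)\setminus\{0\}$ (possible whenever $\mathcal{N}(T)\neq\{0\}$, since $\mathcal{N}(T)\subseteq\mathcal{N}(X_\eme)$) one gets $Qv_0=0$, which is not an eigenvector — so your restriction is a genuine, and correct, sharpening of the statement as written.
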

\begin{proof} \ 

$a)$  Since $X_\eme=Q_{\eme//\mathcal{N}(T)} T^\dagger W$ we get that $\sigma(X_\eme)=\sigma(Q_{\eme//\mathcal{N}(T)} T^\dagger W)=\sigma(T^\dagger W Q_{\eme//\mathcal{N}(T)})=\sigma(T^\dagger W)$, since $\mathcal{N}(T)\subseteq \mathcal{N}(W).$ .

$b)$ It follows from the above item.

$c)$ Again, it follows from the fact that $ X_{\mathcal{N}}=Q_{\mathcal{N}//\mathcal{N}(T)}X_\eme.$

\end{proof}

As we have already said, matrix equations of the form $TX=W$ arise in many problems of engineering, statistics, physics, among others areas. However, in many of these applications the only relevant solutions are those where the solution matrix is Hermitian or positive semidefinite. This occurs, for example, in many statistical problems (see \cite{RAOCHAGANTY1997421}, \cite{MATHEW1997129}, just to mention a few). Evidently, non every solvable matrix equation $TX=W$ admits a  positive semidefinite nor Hermitian solution. The following result due to Khatri and Mitra \cite{MR417212} provides equivalent conditions to the existence of such kind of solutions.

\begin{thm}\label{soluciones}
Let $T,W\in M^{m\times n}(\mathbb{C}).$ The following assertions hold:
\begin{enumerate}
\item If $\mathcal{R}(W)\subseteq \mathcal{R}(T)$ then there exists $X_0\in M^n_H(\mathbb{C})$ such that $TX_0=W$ if and only if $TW^*\in M^m_H(\mathbb{C}).$
\item There exists $X_0\in M^n_+(\mathbb{C})$ such that $TX_0=W$ if and only $TW^*\in M^m_+(\mathbb{C})$ and $r(TW^*)=r(W).$
\end{enumerate}
\end{thm}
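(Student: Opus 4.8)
The plan is to prove the two parts separately, in each case dispatching the ``only if'' direction by a one-line computation from a given solution, and handling the ``if'' direction by exhibiting an explicit solution and checking its symmetry/positivity.

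\emph{Part a).} For the forward implication, if $X_0\in M^n_H(\mathbb{C})$ satisfies $TX_0=W$, then $TW^*=TX_0^*T^*=TX_0T^*$, which is visibly Hermitian. For the converse, assume $\mathcal{R}(W)\subseteq\mathcal{R}(T)$ and $TW^*=WT^*$. I set $A:=T^\dagger W$ and $F:=T^\dagger T$, the orthogonal projection onto $\mathcal{R}(T^*)$. Since $\mathcal{R}(W)\subseteq\mathcal{R}(T)$ one has $TT^\dagger W=W$, hence $TA=W$ and $FA=A$. My candidate is $X_0=A+A^*-AF$. To see it solves the equation, note $TA^*=TW^*(T^\dagger)^*=WT^*(T^\dagger)^*=W(T^\dagger T)^*=WF$ using the hypothesis, while $T(AF)=(TA)F=WF$, so the last two terms cancel and $TX_0=W$. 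The crux is that $X_0$ is Hermitian: conjugating $TW^*=WT^*$ by $T^\dagger$ on the left and $(T^\dagger)^*$ on the right gives $FA^*=T^\dagger(TW^*)(T^\dagger)^*=T^\dagger(WT^*)(T^\dagger)^*=A\,T^*(T^\dagger)^*=AF$, where I used $T^*(T^\dagger)^*=(T^\dagger T)^*=F$. Then $X_0^*=A^*+A-FA^*=A+A^*-AF=X_0$. I expect the identity $FA^*=AF$ to be the one nontrivial point, and it is exactly where the Hermitian hypothesis on $TW^*$ is consumed.

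\emph{Part b).} For the forward implication, let $X_0\in M^n_+(\mathbb{C})$ with $TX_0=W$ and write $X_0=X_0^{1/2}X_0^{1/2}$. Then $TW^*=TX_0T^*=(TX_0^{1/2})(TX_0^{1/2})^*\in M^m_+(\mathbb{C})$. For the rank, I use that $X_0^{1/2}$ annihilates $\mathcal{N}(X_0^{1/2})=\mathcal{R}(X_0^{1/2})^\perp$, so $TX_0^{1/2}$ does too; hence $\mathcal{R}(W)=\mathcal{R}(TX_0^{1/2}X_0^{1/2})=TX_0^{1/2}(\mathcal{R}(X_0^{1/2}))=\mathcal{R}(TX_0^{1/2})$, giving $r(W)=r(TX_0^{1/2})=r\big((TX_0^{1/2})(TX_0^{1/2})^*\big)=r(TW^*)$.

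\emph{Part b), converse.} Assume $TW^*\in M^m_+(\mathbb{C})$ and $r(TW^*)=r(W)$. First I establish solvability: since $TW^*$ is Hermitian, $\mathcal{R}(TW^*)=\mathcal{R}((TW^*)^*)=\mathcal{R}(WT^*)\subseteq\mathcal{R}(W)$, and the rank equality upgrades this to $\mathcal{R}(TW^*)=\mathcal{R}(W)$; in particular $\mathcal{R}(W)=\mathcal{R}(TW^*)\subseteq\mathcal{R}(T)$, so by Theorem \ref{Douglas} the equation is solvable. My candidate is $X_0=W^*(TW^*)^\dagger W$. Because $TW^*\geq 0$ forces $(TW^*)^\dagger\geq 0$, we get $X_0=\big((TW^*)^{\dagger/2}W\big)^*\big((TW^*)^{\dagger/2}W\big)\in M^n_+(\mathbb{C})$. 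Finally $TX_0=(TW^*)(TW^*)^\dagger W=P_{\mathcal{R}(TW^*)}W=P_{\mathcal{R}(W)}W=W$, using $\mathcal{R}(TW^*)=\mathcal{R}(W)$. The only delicate bookkeeping here is the identification $\mathcal{R}(W)=\mathcal{R}(TW^*)$, which makes both the solvability and the projection step work, and which is precisely what the rank hypothesis buys.
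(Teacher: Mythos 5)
Your proof is correct, but there is nothing in the paper to compare it against: the paper does not prove Theorem \ref{soluciones} at all, quoting it from Khatri and Mitra \cite{MR417212}. What you have written is in effect a self-contained reconstruction of the classical argument. In part b) your candidate $X_0=W^*(TW^*)^\dagger W$ is exactly the Khatri--Mitra solution, and the bookkeeping is right: Hermitian\-ness of $TW^*$ gives $\mathcal{R}(TW^*)=\mathcal{R}(WT^*)\subseteq\mathcal{R}(W)$, the rank hypothesis upgrades this to equality, and then $TX_0=(TW^*)(TW^*)^\dagger W=P_{\mathcal{R}(TW^*)}W=W$; positivity of $(TW^*)^\dagger$ follows from positivity of $TW^*$, so $X_0\geq 0$. (Your appeal to Theorem \ref{Douglas} there is redundant, since the explicit candidate already certifies solvability; also note, correctly handled on your side, that part b) --- unlike part a) --- does not assume $\mathcal{R}(W)\subseteq\mathcal{R}(T)$, so deriving it from the hypotheses is genuinely necessary.) In part a) your candidate $X_0=A+A^*-AF$ with $A=T^\dagger W$, $F=T^\dagger T$ is the adjoint of the familiar solution $A+(I-F)A^*$, and you correctly isolate the one nontrivial identity, $FA^*=AF$, obtained by conjugating $TW^*=WT^*$ by $T^\dagger$ and $(T^\dagger)^*$ and using $T^*(T^\dagger)^*=F$; the verifications $TA=W$, $TA^*=WF=T(AF)$ all check out, as does the rank computation in the forward direction of b) via $\mathcal{R}(W)=\mathcal{R}(TX_0^{1/2})$ and $r\bigl((TX_0^{1/2})(TX_0^{1/2})^*\bigr)=r(TX_0^{1/2})$. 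So the verdict is: a complete and correct proof of a statement the paper leaves as a citation, following the same generalized-inverse constructions as the original source.
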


As we mentioned before, we are interested on the reduced solutions of the matrix equation $TX=W$.  In the following results we study Hermitian and positive semidefinite reduced solutions:

\begin{pro}\label{exredhermitiana}
Let $T,W\in M^{m\times n}(\mathbb{C})$ such that $\mathcal{R}(W)\subseteq \mathcal{R}(T)$. The following conditions are equivalent:
\begin{enumerate}
\item There exists an Hermitian reduced solution of $TX=W.$
\item $TW^*\in M^m_H(\mathbb{C})$ and $\mathcal{R}(W^*)\cap\mathcal{N}(T)=\{0\}.$
\end{enumerate}
\end{pro}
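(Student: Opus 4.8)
The plan is to prove the two implications separately, relying on the characterization of reduced solutions in Theorem~\ref{Douglas}---in particular on the fact that any reduced solution $X_\eme$ satisfies $\mathcal{N}(X_\eme)=\mathcal{N}(W)$. For the nontrivial direction I will build a Hermitian solution explicitly, which in passing also recovers the existence part of Theorem~\ref{soluciones}.

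For $a)\Rightarrow b)$ I would start from a Hermitian reduced solution $X_\eme$, so $X_\eme=X_\eme^*$ and $TX_\eme=W$. Taking adjoints gives $W^*=X_\eme T^*$, whence $TW^*=TX_\eme T^*$, which is manifestly Hermitian since $(TX_\eme T^*)^*=TX_\eme^* T^*=TX_\eme T^*$. For the second condition I use that $\mathcal{N}(X_\eme)=\mathcal{N}(W)$ forces $\mathcal{R}(X_\eme)=\mathcal{R}(X_\eme^*)=\mathcal{N}(X_\eme)^\perp=\mathcal{R}(W^*)$; since $\mathcal{R}(X_\eme)\subseteq\eme$ and $\eme\overset{.}{+}\mathcal{N}(T)=\mathbb{C}^n$ (so $\eme\cap\mathcal{N}(T)=\{0\}$), it follows at once that $\mathcal{R}(W^*)\cap\mathcal{N}(T)=\{0\}$.

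For the converse $b)\Rightarrow a)$, the key is to exhibit a Hermitian solution whose range is exactly $\mathcal{R}(W^*)$, and I would propose the explicit candidate $X_0=W^*(TW^*)^\dagger W$. Since $TW^*$ is Hermitian, a short computation with the Moore--Penrose equations \eqref{mp-equations} shows $X_0^*=X_0$, and clearly $\mathcal{R}(X_0)\subseteq\mathcal{R}(W^*)$. The delicate point is to check $TX_0=W$: one has $TX_0=(TW^*)(TW^*)^\dagger W=P_{\mathcal{R}(TW^*)}W$, so it suffices to prove $\mathcal{R}(W)\subseteq\mathcal{R}(TW^*)$. Here both hypotheses enter: the Hermitian identity $TW^*=WT^*$ gives $\mathcal{R}(TW^*)\subseteq\mathcal{R}(W)$, while $\mathcal{R}(W^*)\cap\mathcal{N}(T)=\{0\}$ yields $\dim\mathcal{R}(TW^*)=\dim T(\mathcal{R}(W^*))=r(W^*)=r(W)=\dim\mathcal{R}(W)$, forcing the equality $\mathcal{R}(TW^*)=\mathcal{R}(W)$ and hence $TX_0=P_{\mathcal{R}(W)}W=W$.

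Finally I would upgrade $X_0$ to a reduced solution. Being a solution, $\mathcal{N}(X_0)\subseteq\mathcal{N}(W)$, so $\mathcal{R}(X_0)=\mathcal{R}(X_0^*)\supseteq\mathcal{R}(W^*)$; combined with $\mathcal{R}(X_0)\subseteq\mathcal{R}(W^*)$ this gives $\mathcal{R}(X_0)=\mathcal{R}(W^*)$. Since $\mathcal{R}(W^*)\cap\mathcal{N}(T)=\{0\}$, I can extend $\mathcal{R}(W^*)$ to a subspace $\eme$ with $\eme\overset{.}{+}\mathcal{N}(T)=\mathbb{C}^n$; then $X_0$ is a solution of $TX=W$ with $\mathcal{R}(X_0)\subseteq\eme$, so by the uniqueness in Theorem~\ref{Douglas} it is precisely the reduced solution $X_\eme$, which is Hermitian. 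The main obstacle I anticipate is exactly the verification $TX_0=W$, i.e.\ proving $\mathcal{R}(W)=\mathcal{R}(TW^*)$; this is the one place where the Hermitian hypothesis and the transversality condition $\mathcal{R}(W^*)\cap\mathcal{N}(T)=\{0\}$ must be used together, the latter guaranteeing that $T$ does not collapse the dimension of $\mathcal{R}(W^*)$.
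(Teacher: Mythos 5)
Your proof is correct, and in the direction $b)\Rightarrow a)$ it takes a genuinely different route from the paper's. The paper invokes Khatri--Mitra (Theorem \ref{soluciones}) to obtain an abstract Hermitian solution $X_0$ and then compresses it: it sets $\eme:=\mathcal{R}(W^*)+(\mathcal{R}(W^*)+\mathcal{N}(T))^\bot$, checks that $Q_{\eme//\mathcal{N}(T)}X_0Q_{\eme//\mathcal{N}(T)}^*$ is still a solution (since $\mathcal{R}(W^*)\subseteq\eme$ gives $WQ_{\eme//\mathcal{N}(T)}^*=W$), is manifestly Hermitian with range in $\eme$, and hence by uniqueness is the reduced solution. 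You instead bypass Theorem \ref{soluciones} altogether with the explicit candidate $X_0=W^*(TW^*)^\dagger W$, shifting the burden to the identity $\mathcal{R}(TW^*)=\mathcal{R}(W)$, which you establish correctly: the inclusion from $TW^*=WT^*$, the dimension count from injectivity of $T$ on $\mathcal{R}(W^*)$ (exactly where transversality enters); your closing step --- $\mathcal{R}(X_0)=\mathcal{R}(W^*)$, extend to a complement $\eme$ of $\mathcal{N}(T)$, conclude by the uniqueness in Theorem \ref{Douglas} --- is sound, and in $a)\Rightarrow b)$ you verify $TW^*=TX_\eme T^*\in M^m_H(\mathbb{C})$ directly rather than citing Theorem \ref{soluciones}, so the whole argument is self-contained. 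What each buys: the paper's compression trick is shorter given Khatri--Mitra as a black box and recycles verbatim for the positive semidefinite analogues (Propositions \ref{exredpositiva} and \ref{redpositiva}), while yours is constructive --- a closed formula for the Hermitian reduced solution, with the bonus that $\mathcal{R}(W)\subseteq\mathcal{R}(T)$ comes for free from $\mathcal{R}(W)=\mathcal{R}(TW^*)$, and the same formula adapts to the positive semidefinite case since $TW^*\geq 0$ makes $X_0\geq 0$. One caveat: your aside that the construction ``recovers the existence part of Theorem \ref{soluciones}'' overreaches, since without $\mathcal{R}(W^*)\cap\mathcal{N}(T)=\{0\}$ the inclusion $\mathcal{R}(W)\subseteq\mathcal{R}(TW^*)$ can fail --- e.g.\ $T=\left(\begin{smallmatrix}1&0\\0&0\end{smallmatrix}\right)$, $W=\left(\begin{smallmatrix}0&1\\0&0\end{smallmatrix}\right)$ gives $TW^*=0$ although a Hermitian solution of $TX=W$ exists --- so your formula only works under the transversality hypothesis of item b); this does not affect the validity of your proof of the proposition itself.
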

\begin{proof}
$a)\rightarrow b)$ Let $X_\eme$ be an Hermitian reduced solution of $TX=W.$ Hence, as $\mathcal{N}(X_\eme)=\mathcal{N}(W)$, we get that $\mathcal{R}(X_\eme)=\mathcal{R}(W^*)$ and so $\mathcal{R}(W^*)\cap\mathcal{N}(T)=\{0\}.$ The proof is complete applying Theorem \ref{soluciones}.

$b)\rightarrow a)$ By Theorem \ref{soluciones}, there exists $X_0 \in M^n_H(\mathbb{C})$ such that $TX_0=W.$ On the other hand,   since $\mathcal{R}(W^*)\cap\mathcal{N}(T)=\{0\}$, then $\eme:=\mathcal{R}(W^*)+(\mathcal{R}(W^*)+\mathcal{N}(T))^\bot$ is an algebraic complement of $\mathcal{N}(T)$ in $\mathbb{C}^n.$ Therefore, the projection $Q_{\eme//\mathcal{N}(T)}$ is well-defined and $TQ_{\eme//\mathcal{N}(T)}X_0 Q_{\eme//\mathcal{N}(T)}^*=W Q_{\eme//\mathcal{N}(T)}^*=W$ because $\mathcal{R}(W^*)\subseteq \eme$. Thus, by the uniqueness of the reduced solution for $\eme,$ we get that $Q_{\eme//\mathcal{N}(T)}X_0 Q_{\eme//\mathcal{N}(T)}^* \in M^n_H(\mathbb{C})$ is the reduced solution for $\eme$ of $TX=W$. The proof is conclude.

\end{proof}

 If $T$ and $W$ are bounded operators defined on Hilbert spaces, then the condition `` $\overline{\mathcal{R}(W^*)}\overset{.}{+}\mathcal{N}(T)$ is a closed subspace'' should be added in item b) of 
Proposition \ref{exredhermitiana} in order to get the result. This condition is equivalent to an angle condition between the susbpaces $\overline{\mathcal{R}(W^*)}$ and $\mathcal{N}(T)$, see \cite{MR2514477}.

\begin{pro}\label{redhermitiana} Let $T,W\in M^{m\times n}(\mathbb{C})$ such that $\mathcal{R}(W)\subseteq \mathcal{R}(T)$ and let $\eme$ be a subspace of $\mathbb{C}^n$ such that $\eme\overset{.}{+}\mathcal{N}(T)=\mathbb{C}^n$. If  $TW^*\in M^m_H(\mathbb{C})$ and $X_{\mathcal{M}}$ is the reduced solution for $\mathcal{M}$ of $TX=W$ then the following conditions are equivalent:
\begin{enumerate}
\item $X_{\mathcal{M}}$ is Hermitian;
\item $X_{\mathcal{M}}$ is normal;
\item $\mathcal{R}(W^*)\subseteq \eme$.

\end{enumerate}
In particular,  $T^\dagger W\in M^n_H(\mathbb{C})$ if and only if $T^\dagger W$ is normal   if and only if $\mathcal{R}(W^*)\subseteq \mathcal{R}(T^*)$.
\end{pro}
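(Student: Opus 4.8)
The plan is to prove the equivalence through the cycle $a)\Rightarrow b)\Rightarrow c)\Rightarrow a)$, exploiting throughout the two defining properties of a reduced solution recorded in Theorem \ref{Douglas}: that $\mathcal{R}(X_{\eme})\subseteq\eme$ and that $\mathcal{N}(X_{\eme})=\mathcal{N}(W)$, the latter being the same as $\mathcal{R}(X_{\eme}^*)=\mathcal{R}(W^*)$. The implication $a)\Rightarrow b)$ is immediate, since every Hermitian matrix is normal. For $b)\Rightarrow c)$ I would use that a normal matrix $N$ satisfies $\|Nx\|=\|N^*x\|$ for every $x$, whence $\mathcal{N}(N)=\mathcal{N}(N^*)$ and therefore $\mathcal{R}(N)=\mathcal{N}(N^*)^\bot=\mathcal{R}(N^*)$. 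Applying this to $X_{\eme}$ gives $\mathcal{R}(X_{\eme})=\mathcal{R}(X_{\eme}^*)=\mathcal{R}(W^*)$, and since $\mathcal{R}(X_{\eme})\subseteq\eme$ this yields $\mathcal{R}(W^*)\subseteq\eme$, which is $c)$.

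The crux is $c)\Rightarrow a)$, and here the hypothesis $TW^*\in M^m_H(\mathbb{C})$ finally enters. Assume $\mathcal{R}(W^*)\subseteq\eme$. Since $\eme$ is an algebraic complement of $\mathcal{N}(T)$ we have $\eme\cap\mathcal{N}(T)=\{0\}$, hence $\mathcal{R}(W^*)\cap\mathcal{N}(T)=\{0\}$, so Proposition \ref{exredhermitiana} applies and provides a Hermitian reduced solution $X'$ of $TX=W$ (a priori attached to some other complement of $\mathcal{N}(T)$). As a reduced solution, $X'$ satisfies $\mathcal{R}(X'^*)=\mathcal{R}(W^*)$, and being Hermitian it satisfies $\mathcal{R}(X')=\mathcal{R}(X'^*)=\mathcal{R}(W^*)\subseteq\eme$. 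Thus $X'$ is a solution of $TX=W$ whose range is contained in $\eme$; by the uniqueness clause of Theorem \ref{Douglas} it must coincide with $X_{\eme}$. Therefore $X_{\eme}=X'$ is Hermitian, which is $a)$, and the cycle closes.

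The final assertion follows by specializing $\eme=\mathcal{R}(T^*)$, which is an (orthogonal) complement of $\mathcal{N}(T)$ and for which $X_{\mathcal{R}(T^*)}=T^\dagger W$ by Theorem \ref{Douglas}; the three conditions then read precisely that $T^\dagger W$ is Hermitian, that $T^\dagger W$ is normal, and that $\mathcal{R}(W^*)\subseteq\mathcal{R}(T^*)$. I expect the only delicate point to be the step $c)\Rightarrow a)$: Proposition \ref{exredhermitiana} only guarantees a Hermitian reduced solution for \emph{some} complement, not a priori for the prescribed $\eme$, so the argument must transport Hermitianness back to $X_{\eme}$ via the range identity $\mathcal{R}(X')=\mathcal{R}(W^*)\subseteq\eme$ together with the uniqueness of the reduced solution attached to $\eme$.
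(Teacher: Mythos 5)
Your proof is correct, and two of its three legs coincide with the paper's: $a)\Rightarrow b)$ is trivial in both, and your $b)\Rightarrow c)$ (normality gives $\mathcal{R}(X_\eme)=\mathcal{R}(X_\eme^*)=\mathcal{R}(W^*)\subseteq\eme$, using $\mathcal{N}(X_\eme)=\mathcal{N}(W)$ from Theorem \ref{Douglas}) is exactly the paper's computation. The difference is in $c)\Rightarrow a)$. The paper argues directly from the Khatri--Mitra theorem (Theorem \ref{soluciones}): it takes a Hermitian solution $X_0$ of $TX=W$, conjugates by the oblique projection $Q:=Q_{\eme//\mathcal{N}(T)}$, checks $TQX_0Q^*=WQ^*=W$ (where $\mathcal{R}(W^*)\subseteq\eme$ gives $QW^*=W^*$), and identifies $QX_0Q^*$, visibly Hermitian with range in $\eme$, as $X_\eme$ by uniqueness. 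You instead invoke the previously established existence result, Proposition \ref{exredhermitiana} --- legitimately, since $\mathcal{R}(W^*)\subseteq\eme$ and $\eme\cap\mathcal{N}(T)=\{0\}$ yield $\mathcal{R}(W^*)\cap\mathcal{N}(T)=\{0\}$, and that proposition precedes this one with no circularity --- and then transport its Hermitian reduced solution $X'$ to $\eme$: as a reduced solution $\mathcal{R}(X'^*)=\mathcal{R}(W^*)$, so Hermitianness forces $\mathcal{R}(X')=\mathcal{R}(W^*)\subseteq\eme$, and the uniqueness clause of Theorem \ref{Douglas} gives $X'=X_\eme$. The underlying ingredients are identical (Khatri--Mitra plus uniqueness of the reduced solution), and indeed your route merely hides the conjugation trick inside the cited proposition, whose proof is precisely that trick; but your transport argument buys a slightly sharper by-product that the paper's direct construction leaves implicit, namely that the Hermitian reduced solution has range exactly $\mathcal{R}(W^*)$ and is therefore unique, being simultaneously the reduced solution for every complement of $\mathcal{N}(T)$ containing $\mathcal{R}(W^*)$. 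Your specialization $\eme=\mathcal{R}(T^*)$, $X_{\mathcal{R}(T^*)}=T^\dagger W$ for the final claim matches the paper.
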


\begin{proof} $ \ $
$a)\rightarrow b)$ It is immediate.

$b)\rightarrow c)$ If $X_{\mathcal{M}}X^*_{\mathcal{M}}=X^*_{\mathcal{M}}X_{\mathcal{M}}$ then $\mathcal{R}(X_{\mathcal{M}})=\mathcal{R}(X_{\mathcal{M}}^*).$ So that $\mathcal{R}(W^*)= \mathcal{N}(W)^\bot=\mathcal{N}(X_{\mathcal{M}})^{\bot}=\mathcal{R}(X_{\mathcal{M}})\subseteq \mathcal{M}$.

$c)\rightarrow a)$ By Theorem \ref{soluciones}, there exists $X_0\in M^n_H(\mathbb{C})$ such that $TX_0=W.$ Therefore, $TQ_{\eme//\mathcal{N}(T)}X_0 Q_{\eme//\mathcal{N}(T)}^*=W Q_{\eme//\mathcal{N}(T)}^*=W$ because $\mathcal{R}(W^*)\subseteq \eme$. Thus, by the uniqueness of the reduced solution for $\eme,$ we get that $X_\eme=Q_{\eme//\mathcal{N}(T)}X_0 Q_{\eme//\mathcal{N}(T)}^*,$ i.e, $X_\eme\in M^n_H(\mathbb{C})$.

\end{proof}

\begin{cor}
Let $T,W\in M^{m\times n}(\mathbb{C})$ such that $\mathcal{R}(W)\subseteq \mathcal{R}(T)$. If $T^\dagger W\in M^n_H(\mathbb{C})$ then $\rho(X_{\mathcal{M}})=\rho(T^\dagger W)$ for all reduced solution for $\mathcal{M}$ of $TX=W$.
\end{cor}

\begin{proof}
It follows from Lemma \ref{rhoreducidas} and Proposition \ref{redhermitiana}.
\end{proof}

We omit the proofs of the next results concerning  positive semidefinite reduced solutions of $TX=W$ since they follow proceeding as in the proofs of Propositions \ref{exredhermitiana} and \ref{redhermitiana}.

\begin{pro}\label{exredpositiva}
Let $T,W\in M^{m\times n}(\mathbb{C})$ such that $\mathcal{R}(W)\subseteq \mathcal{R}(T)$. The following conditions are equivalent:
\begin{enumerate}
\item There exists a  positive semidefinite reduced solution of $TX=W.$
\item $TW^*\in M^m_+(\mathbb{C})$, $r(TW^*)=r(W)$ and $\mathcal{R}(W^*)\cap\mathcal{N}(T)=\{0\}.$
\end{enumerate}
\end{pro}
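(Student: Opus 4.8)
The plan is to prove Proposition~\ref{exredpositiva} by adapting the argument for the Hermitian case (Proposition~\ref{exredhermitiana}) and replacing Khatri--Mitra's Hermitian criterion (Theorem~\ref{soluciones}~$a)$) by its positive semidefinite counterpart (Theorem~\ref{soluciones}~$b)$). The overall structure is identical: a positive semidefinite reduced solution is in particular a positive semidefinite solution, which forces the two-sided Khatri--Mitra conditions $TW^*\in M^m_+(\mathbb{C})$ and $r(TW^*)=r(W)$; conversely, starting from any positive semidefinite solution one compresses it by the appropriate projection to produce the reduced solution, preserving positivity.

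For the implication $a)\rightarrow b)$ I would let $X_\eme$ be a positive semidefinite reduced solution of $TX=W$. Since $X_\eme\in M^n_+(\mathbb{C})$ solves $TX=W$, Theorem~\ref{soluciones}~$b)$ immediately gives $TW^*\in M^m_+(\mathbb{C})$ and $r(TW^*)=r(W)$. The range condition $\mathcal{R}(W^*)\cap\mathcal{N}(T)=\{0\}$ comes exactly as in Proposition~\ref{exredhermitiana}: because $X_\eme$ is positive semidefinite it is in particular Hermitian, so $\mathcal{N}(X_\eme)=\mathcal{N}(X_\eme^*)$ and hence $\mathcal{R}(X_\eme)=\mathcal{N}(X_\eme)^\bot=\mathcal{N}(W)^\bot=\mathcal{R}(W^*)$; since by Theorem~\ref{Douglas} we have $\mathcal{R}(X_\eme)\subseteq\eme$ and $\eme\cap\mathcal{N}(T)=\{0\}$, we conclude $\mathcal{R}(W^*)\cap\mathcal{N}(T)=\{0\}$.

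For the converse $b)\rightarrow a)$ the hypotheses $TW^*\in M^m_+(\mathbb{C})$ and $r(TW^*)=r(W)$ furnish, via Theorem~\ref{soluciones}~$b)$, a positive semidefinite solution $X_0\in M^n_+(\mathbb{C})$ with $TX_0=W$. Then, exactly as in Proposition~\ref{exredhermitiana}, the condition $\mathcal{R}(W^*)\cap\mathcal{N}(T)=\{0\}$ lets me take $\eme:=\mathcal{R}(W^*)+(\mathcal{R}(W^*)+\mathcal{N}(T))^\bot$, which is an algebraic complement of $\mathcal{N}(T)$, so the projection $Q_{\eme//\mathcal{N}(T)}$ is well defined. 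The candidate reduced solution is the compression $Q_{\eme//\mathcal{N}(T)}X_0 Q_{\eme//\mathcal{N}(T)}^*$; using $\mathcal{R}(W^*)\subseteq\eme$ one checks $TQ_{\eme//\mathcal{N}(T)}X_0 Q_{\eme//\mathcal{N}(T)}^*=WQ_{\eme//\mathcal{N}(T)}^*=W$, and by uniqueness of the reduced solution for $\eme$ this compression equals $X_\eme$.

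The key extra point beyond the Hermitian proof, and the only place any real care is needed, is positivity: I must note that the compression $Q_{\eme//\mathcal{N}(T)}X_0 Q_{\eme//\mathcal{N}(T)}^*$ of a positive semidefinite matrix $X_0$ is again positive semidefinite, since for any $x$ one has $\langle Q X_0 Q^* x,x\rangle=\langle X_0 Q^*x,Q^*x\rangle\geq 0$. This is precisely the reason the same two-sided sandwich works for the positive semidefinite case, and it is exactly the observation that the authors allude to when they say the proof proceeds as in Propositions~\ref{exredhermitiana} and~\ref{redhermitiana}; there is no genuine obstacle here, the argument is a direct transplant once Theorem~\ref{soluciones}~$b)$ is used in place of Theorem~\ref{soluciones}~$a)$.
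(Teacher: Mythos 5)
Your proof is correct and takes essentially the same route the paper intends: the paper omits the proof of Proposition~\ref{exredpositiva}, stating only that it proceeds as in Propositions~\ref{exredhermitiana} and~\ref{redhermitiana}, and your argument is exactly that adaptation, with Theorem~\ref{soluciones}~b) replacing the Hermitian criterion. The one detail the paper leaves tacit --- that the congruence $Q_{\eme//\mathcal{N}(T)}X_0Q_{\eme//\mathcal{N}(T)}^*$ preserves positive semidefiniteness --- you state and verify correctly.
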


\begin{pro}\label{redpositiva} Let $T,W\in M^{m\times n}(\mathbb{C})$ such that $TW^*\in M^n_+(\mathbb{C})$ and $r(TW^*)=r(W)$, and let $\eme$ be a subspace of $\mathbb{C}^n$ such that $\eme\overset{.}{+}\mathcal{N}(T)=\mathbb{C}^n$. If  $X_{\mathcal{M}}$ is the reduced solution for $\mathcal{M}$ of $TX=W$ then the following conditions are equivalent:
\begin{enumerate}
\item $X_{\mathcal{M}}$ is  positive semidefinite;
\item $X_{\mathcal{M}}$ is normal;
\item $\mathcal{R}(W^*)\subseteq \eme$.

\end{enumerate}
In particular, $T^\dagger W\in M^n_+(\mathbb{C})$  if and only if $T^\dagger W$ is normal if and only if $\mathcal{R}(W^*)\subseteq \mathcal{R}(T^*)$.
\end{pro}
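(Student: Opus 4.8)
The plan is to prove Proposition \ref{redpositiva} by mimicking the proof of Proposition \ref{redhermitiana}, since the positive semidefinite case differs only in replacing Hermiticity with positive semidefiniteness and invoking the appropriate part of Theorem \ref{soluciones}. I would establish the cyclic chain of implications $a)\rightarrow b)\rightarrow c)\rightarrow a)$. The implication $a)\rightarrow b)$ is immediate, since every positive semidefinite matrix is normal. For $b)\rightarrow c)$, if $X_{\mathcal{M}}$ is normal then $\mathcal{R}(X_{\mathcal{M}})=\mathcal{R}(X_{\mathcal{M}}^*)$; combined with the defining property $\mathcal{N}(X_{\mathcal{M}})=\mathcal{N}(W)$ from Theorem \ref{Douglas}, this yields $\mathcal{R}(W^*)=\mathcal{N}(W)^\bot=\mathcal{N}(X_{\mathcal{M}})^\bot=\mathcal{R}(X_{\mathcal{M}})\subseteq \mathcal{M}$, exactly as in the Hermitian case.

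The implication $c)\rightarrow a)$ is where the hypotheses specific to the positive semidefinite setting enter. Here I would invoke part \emph{b)} of Theorem \ref{soluciones}: since $TW^*\in M^n_+(\mathbb{C})$ and $r(TW^*)=r(W)$ are assumed, there exists $X_0\in M^n_+(\mathbb{C})$ with $TX_0=W$. Then, using that $\mathcal{R}(W^*)\subseteq \eme$, the compression $Q_{\eme//\mathcal{N}(T)}X_0 Q_{\eme//\mathcal{N}(T)}^*$ satisfies $TQ_{\eme//\mathcal{N}(T)}X_0 Q_{\eme//\mathcal{N}(T)}^*=WQ_{\eme//\mathcal{N}(T)}^*=W$, and by the uniqueness of the reduced solution for $\eme$ it must equal $X_\eme$. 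The key observation is that this compression preserves positive semidefiniteness: for $X_0\in M^n_+(\mathbb{C})$ and any matrix $Q$, the matrix $QX_0Q^*$ is again positive semidefinite, so $X_\eme\in M^n_+(\mathbb{C})$.

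The main obstacle, which is really the only nontrivial point, is verifying the identity $WQ_{\eme//\mathcal{N}(T)}^*=W$ used in $c)\rightarrow a)$. This requires that $Q_{\eme//\mathcal{N}(T)}^*$ act as the identity on $\mathcal{R}(W^*)$, equivalently that $Q_{\eme//\mathcal{N}(T)}$ act as the identity on $\mathcal{R}(W^*)^{\text{pre-image}}$; the clean way to see it is that $\mathcal{R}(W^*)\subseteq \eme$ forces $Q_{\eme//\mathcal{N}(T)}$ to fix each column of $W^*$, hence $Q_{\eme//\mathcal{N}(T)}W^*=W^*$, and taking adjoints gives $WQ_{\eme//\mathcal{N}(T)}^*=W$. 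Beyond this, the argument is purely formal. Since every step parallels Proposition \ref{redhermitiana} verbatim except for the substitution of Theorem \ref{soluciones} \emph{b)} for \emph{a)} and the remark that congruence preserves positivity, the authors' decision to omit the proof is justified, and the final ``in particular'' clause follows by taking $\eme=\mathcal{R}(T^*)$, for which $X_\eme=T^\dagger W$ by Theorem \ref{Douglas}.
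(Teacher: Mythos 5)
Your proof is correct and takes essentially the same route the paper intends: the authors omit the proof of Proposition \ref{redpositiva} precisely because it proceeds as in Proposition \ref{redhermitiana}, with Theorem \ref{soluciones} \emph{b)} supplying the positive semidefinite solution $X_0$ and the congruence $Q_{\eme//\mathcal{N}(T)}X_0\,Q_{\eme//\mathcal{N}(T)}^*$ preserving positive semidefiniteness. Your verification that $\mathcal{R}(W^*)\subseteq\eme$ forces $Q_{\eme//\mathcal{N}(T)}W^*=W^*$, hence $WQ_{\eme//\mathcal{N}(T)}^*=W$, is exactly the step used in the Hermitian case, and the ``in particular'' clause via $\eme=\mathcal{R}(T^*)$ and $X_{\mathcal{R}(T^*)}=T^\dagger W$ is likewise as in the paper.
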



\section{Proper splitting of matrices}\label{propersplittingmatrices}

Given a matrix $T\in M^{m\times n}(\mathbb{C})$, a splitting of $T$ is a partition of $T$ in the following form: $T=U-V$ with  $U, V \in M^{m\times n}(\mathbb{C})$. 
 In \cite{MR348984}, Berman and Plemmons introduced the concept of proper splitting for rectangular matrices with the aim of developing an iterative process that converges to the best least squares approximate solution of the system $Tx=w.$ Let us formally introduce this concept:

\begin{defi}
Let $T\in M^{m\times n}(\mathbb{C})$. The decomposition
$$
T=U-V,
$$
where $U, V \in M^{m\times n}(\mathbb{C})$ is called a proper splitting of $T$ if  $\mathcal{R}(U)=\mathcal{R}(T)$ and $\mathcal{N}(U)=\mathcal{N}(T)$.
\end{defi}

In the following result we gather some properties of proper splittings. Many of these properties can be found in \cite{MR348984}.

\begin{pro}\label{propiedades splitting}
If $T=U-V$ is a proper splitting of $T$ then the following assertions hold:
\begin{enumerate}
\item $\mathcal{N}(T)\subseteq \mathcal{N}(V)$ and $\mathcal{R}(V)\subseteq \mathcal{R}(T)$.
\item $T=U(I-U^\dagger V)$ and $I-U^\dagger V$ is invertible.
\item $T^\dagger= (I-U^\dagger V)^{-1} U^\dagger$.
\item $(T^\dagger U)^\dagger = U^\dagger T$.
\end{enumerate}
\end{pro}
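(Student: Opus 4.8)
The plan is to prove the four items in order, since each leans on the previous one, and to use Greville's reverse-order-law criterion (Theorem \ref{Greville}) together with the two defining identities $\mathcal{R}(U)=\mathcal{R}(T)$ and $\mathcal{N}(U)=\mathcal{N}(T)$. For item a), I would argue directly from $V=U-T$. If $x\in\mathcal{N}(T)$ then $x\in\mathcal{N}(U)$ as well (equal nullspaces), so $Vx=Ux-Tx=0$, giving $\mathcal{N}(T)\subseteq\mathcal{N}(V)$; and since for every $x$ both $Ux$ and $Tx$ lie in $\mathcal{R}(U)=\mathcal{R}(T)$, so does $Vx=Ux-Tx$, whence $\mathcal{R}(V)\subseteq\mathcal{R}(T)$. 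For item b), the identity $T=U(I-U^\dagger V)$ reduces to $UU^\dagger V=V$: as $UU^\dagger$ is the orthogonal projection onto $\mathcal{R}(U)$ and $\mathcal{R}(V)\subseteq\mathcal{R}(T)=\mathcal{R}(U)$ by item a), this projection fixes $V$. To see the square matrix $I-U^\dagger V$ is invertible, I would prove injectivity: if $(I-U^\dagger V)x=0$, applying $U$ yields $Tx=U(I-U^\dagger V)x=0$, so $x\in\mathcal{N}(T)=\mathcal{N}(V)$; then $Vx=0$, and therefore $x=U^\dagger Vx=0$.

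For item c), set $B:=I-U^\dagger V$, so that $T=UB$ with $B$ invertible and $B^\dagger=B^{-1}$. I would apply Greville with $T_1=U$ and $T_2=B$ to conclude $(UB)^\dagger=B^\dagger U^\dagger=B^{-1}U^\dagger$. The first Greville inclusion $\mathcal{R}(U^*UB)\subseteq\mathcal{R}(B)$ is automatic since $\mathcal{R}(B)=\mathbb{C}^n$. The second inclusion $\mathcal{R}(BB^*U^*)\subseteq\mathcal{R}(U^*)$ is where the one genuinely useful observation enters: taking adjoints in the identity $T=UB$ of item b) gives $B^*U^*=T^*$, so $BB^*U^*=BT^*=T^*-U^\dagger VT^*$; because $\mathcal{R}(T^*)=\mathcal{R}(U^*)$ and $\mathcal{R}(U^\dagger)=\mathcal{R}(U^*)$, this range lies inside $\mathcal{R}(U^*)$, as needed. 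This yields $T^\dagger=(I-U^\dagger V)^{-1}U^\dagger$.

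For item d), I would invoke Greville once more, now with $T_1=T^\dagger$ and $T_2=U$, aiming at $(T^\dagger U)^\dagger=U^\dagger(T^\dagger)^\dagger=U^\dagger T$ (using that the Moore--Penrose inverse is involutive). Here both range conditions collapse to the equalities $\mathcal{R}(U)=\mathcal{R}(T)$ and $\mathcal{R}((T^\dagger)^*)=\mathcal{R}((T^*)^\dagger)=\mathcal{R}(T)$: the first condition $\mathcal{R}((T^\dagger)^*T^\dagger U)\subseteq\mathcal{R}(U)$ holds since the left-hand range sits inside $\mathcal{R}((T^\dagger)^*)=\mathcal{R}(T)=\mathcal{R}(U)$, while the second condition $\mathcal{R}(UU^*(T^\dagger)^*)\subseteq\mathcal{R}((T^\dagger)^*)$ holds because $UU^*$ has range $\mathcal{R}(U)=\mathcal{R}(T)=\mathcal{R}((T^\dagger)^*)$.

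The computations themselves are routine range-bookkeeping; the only step I expect to require a moment of thought is the second Greville condition in item c), where one must notice that $BB^*U^*$ simplifies via the adjoint relation $B^*U^*=T^*$ rather than being attacked head-on. Everything else follows from repeatedly using $\mathcal{R}(U)=\mathcal{R}(T)$, $\mathcal{N}(U)=\mathcal{N}(T)$, and the standard facts $\mathcal{R}(A^\dagger)=\mathcal{R}(A^*)$ and $AA^\dagger=P_{\mathcal{R}(A)}$, so the main care is in selecting the right factorization and the right pair $(T_1,T_2)$ for each Greville application.
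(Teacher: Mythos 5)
Your proof is correct, and it is a mix of the paper's own arguments and a self-contained replacement for a step the paper only cites. Items a) and d) coincide with the paper's proof: a) is the same direct computation from $V=U-T$, and d) is exactly the paper's application of Theorem \ref{Greville} with $T_1=T^\dagger$, $T_2=U$ --- you merely spell out the two range inclusions, and your verification in fact shows that only the range equality $\mathcal{R}(U)=\mathcal{R}(T)$ is needed there. The genuine difference is in items b) and c), which the paper does not prove but delegates to \cite[Theorem 1]{MR348984}, where $T^\dagger=(I-U^\dagger V)^{-1}U^\dagger$ is established by direct verification of the Moore--Penrose equations; you instead prove b) via $UU^\dagger V=V$ plus the injectivity argument (note one cosmetic slip: you write $x\in\mathcal{N}(T)=\mathcal{N}(V)$ where only the inclusion $\mathcal{N}(T)\subseteq\mathcal{N}(V)$ from a) holds --- and that inclusion is all you use), and you then derive c) by a second application of Greville's criterion to the factorization $T=UB$ with $B=I-U^\dagger V$, where invertibility of $B$ trivializes $\mathcal{R}(U^*UB)\subseteq\mathcal{R}(B)$ and the adjoint identity $B^*U^*=T^*$ together with $\mathcal{R}(T^*)=\mathcal{R}(U^*)=\mathcal{R}(U^\dagger)$ settles $\mathcal{R}(BB^*U^*)\subseteq\mathcal{R}(U^*)$. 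What this buys is uniformity and self-containedness: all of b)--d) rest on the single tool already stated in the preliminaries, rather than on an external reference. One caveat worth being aware of: Theorem \ref{Greville} as stated in the paper is phrased for square matrices, so your application to the rectangular pair $(U,B)$ tacitly invokes the general rectangular form of Greville's theorem --- but this is the same liberty the paper itself takes in item d), where it applies the theorem to $T^\dagger\in M^{n\times m}(\mathbb{C})$ and $U\in M^{m\times n}(\mathbb{C})$, so it is not a gap relative to the paper's own standards.
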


\begin{proof} Let $T=U-V$ be a proper splitting of $T$.

$a)$ If $x\in\mathcal{N}(T)$ then $Tx=0$ and so $Ux=0$. Then $Vx=0$. Therefore  $\mathcal{N}(T)\subseteq \mathcal{N}(V)$. In addition, since $V=U-T$ and $\mathcal{R}(U)=\mathcal{R}(T)$ then $\mathcal{R}(V)\subseteq \mathcal{R}(T)$.

$b), c)$ The proofs can be found in \cite[Theorem 1]{MR348984}.

$d)$  Since $\mathcal{R}(U)=\mathcal{R}(T)$ and $\mathcal{N}(U)=\mathcal{N}(T)$ the assertion follows from Theorem \ref{Greville}.
\end{proof}

The following results will be useful in the next section.

\begin{pro}\label{UdaggerV-Hermitiana}
Let $T=U-V$ be  a proper splitting of $T$. The following assertions are equivalent:
\begin{enumerate}
\item $VT^*\in M^n_H(\mathbb{C});$
\item $UT^*\in M^n_H(\mathbb{C});$
\item $VU^*\in M^n_H(\mathbb{C});$
\item $U^\dagger V\in M^n_H(\mathbb{C})$;
\item $T^\dagger U\in M^n_H(\mathbb{C})$;
\item $T^\dagger V \in M^n_H(\mathbb{C})$.
\end{enumerate}
\end{pro}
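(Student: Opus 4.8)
The plan is to split the six statements into two clusters---the ``adjoint'' conditions $a), b), c)$ and the ``pseudoinverse'' conditions $d), e), f)$---prove that each cluster is internally equivalent, and then bridge them. The delicate point is precisely this bridge: the matrices in $a), b), c)$ are $m\times m$, while those in $d), e), f)$ are $n\times n$, so one cannot compare them directly and must transport one type of condition into the other by conjugation.

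For the first cluster I would simply expand using $U=T+V$ and $T^*=U^*-V^*$. This gives $VT^*=VU^*-VV^*$ and $UT^*=UU^*-UV^*$. Since $VV^*$ and $UU^*$ are automatically Hermitian and $UV^*=(VU^*)^*$ (so that $UV^*$ is Hermitian exactly when $VU^*$ is), one reads off at once $a)\Leftrightarrow c)$ and $b)\Leftrightarrow c)$.

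For the second cluster I would rely on Proposition \ref{propiedades splitting}. Writing $C=U^\dagger V$, Proposition \ref{propiedades splitting}$c)$ gives $T^\dagger V=(I-C)^{-1}C=(I-C)^{-1}-I$, where $I-C$ is invertible by Proposition \ref{propiedades splitting}$b)$; since an invertible matrix is Hermitian iff its inverse is, and $I-C$ is Hermitian iff $C$ is, we obtain $d)\Leftrightarrow f)$. Next, $T^\dagger U=T^\dagger(T+V)=T^\dagger T+T^\dagger V=P_{\mathcal{R}(T^*)}+T^\dagger V$, and as $T^\dagger T=P_{\mathcal{R}(T^*)}$ is Hermitian this yields $e)\Leftrightarrow f)$.

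The main obstacle is the bridge $c)\Leftrightarrow d)$, which I would establish through two conjugation identities built from the proper-splitting hypotheses. Using $UU^\dagger=P_{\mathcal{R}(U)}=P_{\mathcal{R}(T)}$ and $\mathcal{R}(V)\subseteq\mathcal{R}(T)$ (Proposition \ref{propiedades splitting}$a)$), one gets $U(U^\dagger V)U^*=(UU^\dagger)VU^*=VU^*$, so if $U^\dagger V$ is Hermitian then so is $VU^*$, giving $d)\Rightarrow c)$. Conversely, from $U^*(U^*)^\dagger=U^\dagger U=P_{\mathcal{R}(T^*)}$ and $\mathcal{N}(T)\subseteq\mathcal{N}(V)$ (so that $VP_{\mathcal{R}(T^*)}=V$), one gets $U^\dagger(VU^*)(U^*)^\dagger=U^\dagger V$; hence if $VU^*$ is Hermitian then $U^\dagger V=U^\dagger(VU^*)(U^\dagger)^*$ is Hermitian, giving $c)\Rightarrow d)$. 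Chaining the three parts gives $a)\Leftrightarrow b)\Leftrightarrow c)\Leftrightarrow d)\Leftrightarrow e)\Leftrightarrow f)$. I expect the only real care to lie in checking the range/nullspace inclusions that force the projection factors $P_{\mathcal{R}(T)}$ and $P_{\mathcal{R}(T^*)}$ to collapse, which is exactly where the proper-splitting conditions $\mathcal{R}(U)=\mathcal{R}(T)$ and $\mathcal{N}(U)=\mathcal{N}(T)$ are used.
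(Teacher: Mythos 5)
Your proof is correct, but it takes a genuinely different route from the paper's. For the cluster $a)\leftrightarrow b)\leftrightarrow c)$ you and the paper do the same trivial algebra (the paper expands $UT^*=TT^*+VT^*$ and $VU^*=UU^*-TU^*$; your $VT^*=VU^*-VV^*$, $UT^*=UU^*-UV^*$ is the same idea). The substantive part diverges: the paper crosses from the adjoint conditions to the pseudoinverse conditions \emph{three times}, deducing $c)\leftrightarrow d)$, $b)\leftrightarrow e)$ and $a)\leftrightarrow f)$ each in one stroke from Theorem \ref{soluciones} (Khatri--Mitra: a solvable $TX=W$ has a Hermitian solution iff $TW^*$ is Hermitian) together with Proposition \ref{redhermitiana} (the reduced solution for $\mathcal{M}$ is Hermitian iff $\mathcal{R}(W^*)\subseteq\mathcal{M}$), after observing that $U^\dagger V$, $T^\dagger U$, $T^\dagger V$ are the Moore--Penrose reduced solutions of $UX=V$, $TX=U$, $TX=V$ and that the needed range inclusions (e.g.\ $\mathcal{R}(V^*)\subseteq\mathcal{R}(U^*)=\mathcal{R}(T^*)$) are automatic for a proper splitting. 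You instead build a \emph{single} bridge $c)\leftrightarrow d)$ via the congruences $VU^*=U(U^\dagger V)U^*$ and $U^\dagger V=U^\dagger(VU^*)(U^\dagger)^*$ --- both identities check out, with the projection collapses $UU^\dagger V=V$ and $VU^*(U^*)^\dagger=VP_{\mathcal{R}(T^*)}=V$ justified exactly by Proposition \ref{propiedades splitting}$a)$ --- and then stay inside the pseudoinverse cluster using $T^\dagger V=(I-U^\dagger V)^{-1}-I$ and $T^\dagger U=P_{\mathcal{R}(T^*)}+T^\dagger V$, which is also sound. What each approach buys: yours is elementary and self-contained (only Moore--Penrose identities and Proposition \ref{propiedades splitting}, no Khatri--Mitra), it makes the $m\times m$ versus $n\times n$ dimension issue explicit (which the paper's statement glosses over by writing $M^n_H(\mathbb{C})$ throughout), and the congruence trick transfers verbatim to positive semidefiniteness, giving a direct proof of the hard implication in Proposition \ref{UdaggerV positiva}; the paper's proof is shorter on the page and deliberately routes through the reduced-solution framework of Section \ref{redsolutions}, so that the same template yields the Hermitian and positive semidefinite variants (Propositions \ref{UdaggerV positiva} and \ref{TU}) uniformly.
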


\begin{proof} $ \ $

$a)\leftrightarrow b)$ It is obvious since $UT^*=TT^*+VT^*$. 

$b)\leftrightarrow c)$ It is obvious since $VU^*=UU^*-TU^*.$ 

The equivalences $c)\leftrightarrow d)$, $b)\leftrightarrow e)$  and $a)\leftrightarrow f)$ follow from Proposition \ref{redhermitiana} and Theorem \ref{soluciones}.
\end{proof}

\begin{pro}\label{UdaggerV positiva}
Let $T=U-V$ be  a proper splitting of $T$. The following assertions are equivalent:
\begin{enumerate}
\item $U^\dagger V\in M^n_+(\mathbb{C})$;
\item $VU^*\in M^n_+(\mathbb{C})$ and $r(VU^*)=r(V);$
\item $U^\dagger T\in M^n_H(\mathbb{C})$ and $U^\dagger T \leq P_{T^*}$.
\end{enumerate}
\end{pro}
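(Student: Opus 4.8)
The plan is to reduce all three conditions to the single matrix $U^\dagger V$ and to exploit two structural facts about proper splittings. First I would record a key identity. Since $\mathcal{N}(U)=\mathcal{N}(T)$ we have $\mathcal{R}(U^*)=\mathcal{N}(U)^\bot=\mathcal{N}(T)^\bot=\mathcal{R}(T^*)$, so the orthogonal projector $U^\dagger U$ equals $P_{T^*}$. Writing $T=U-V$ and multiplying on the left by $U^\dagger$ then gives $U^\dagger T=U^\dagger U-U^\dagger V=P_{T^*}-U^\dagger V$, that is,
\[
U^\dagger V=P_{T^*}-U^\dagger T.
\]
Second, since $\mathcal{N}(U)=\mathcal{N}(T)\subseteq\mathcal{N}(V)$ by Proposition \ref{propiedades splitting}, taking orthogonal complements yields $\mathcal{R}(V^*)\subseteq\mathcal{R}(U^*)$, an inclusion that holds automatically for every proper splitting.

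For the equivalence $a)\leftrightarrow c)$ I would argue directly from the displayed identity. By definition $U^\dagger V\in M^n_+(\mathbb{C})$ means that $U^\dagger V$ is Hermitian and $U^\dagger V\geq 0$. Because $P_{T^*}$ is Hermitian, $U^\dagger V=P_{T^*}-U^\dagger T$ is Hermitian precisely when $U^\dagger T$ is Hermitian, and in that case $U^\dagger V\geq 0$ is exactly the L\"owner inequality $U^\dagger T\leq P_{T^*}$. Hence $a)$ and $c)$ are the same statement rephrased through the identity, and this equivalence is essentially a tautology once the identity is in place.

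For $a)\leftrightarrow b)$ I would view $U^\dagger V$ as a reduced solution. Since $\mathcal{R}(V)\subseteq\mathcal{R}(T)=\mathcal{R}(U)$, the equation $UX=V$ is solvable, and by Theorem \ref{Douglas} its reduced solution for $\mathcal{R}(U^*)$ is exactly $U^\dagger V$. If $a)$ holds, then $U^\dagger V$ is a positive semidefinite reduced solution of $UX=V$, so Proposition \ref{exredpositiva} applied with the roles $T\rightsquigarrow U$, $W\rightsquigarrow V$ forces $UV^*\in M^n_+(\mathbb{C})$ and $r(UV^*)=r(V)$. Conversely, assuming these rank and positivity conditions, Proposition \ref{redpositiva} applied to $UX=V$ shows that the reduced solution $U^\dagger V$ for $\mathcal{R}(U^*)$ is positive semidefinite as soon as $\mathcal{R}(V^*)\subseteq\mathcal{R}(U^*)$, which we already noted is automatic. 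The only remaining bookkeeping is to pass between $UV^*$ and $VU^*$: these are mutual adjoints, so one lies in the positive cone iff the other does (in which case they coincide), and $r(UV^*)=r(VU^*)$ always; replacing $UV^*$ by $VU^*$ throughout then yields condition $b)$ exactly.

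The step I expect to require the most care is $a)\leftrightarrow b)$, specifically verifying that the hypotheses of Propositions \ref{exredpositiva} and \ref{redpositiva} are met under the substitution $T\rightsquigarrow U$, $W\rightsquigarrow V$, and confirming that the automatic inclusion $\mathcal{R}(V^*)\subseteq\mathcal{R}(U^*)$ is precisely what makes the chain ``positive semidefinite $\Leftrightarrow$ normal $\Leftrightarrow$ range inclusion'' of Proposition \ref{redpositiva} collapse, so that the positivity of $U^\dagger V$ is controlled solely by the positivity and rank of $VU^*$. By contrast, the identity $U^\dagger V=P_{T^*}-U^\dagger T$ renders $a)\leftrightarrow c)$ routine.
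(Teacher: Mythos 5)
Your proof is correct and takes essentially the same route as the paper: the equivalence $a)\leftrightarrow c)$ rests on the same identity $U^\dagger V=U^\dagger U-U^\dagger T=P_{T^*}-U^\dagger T$, and $a)\leftrightarrow b)$ is obtained, as in the paper, by viewing $U^\dagger V$ as the reduced solution of $UX=V$ and invoking Theorem \ref{soluciones} together with Proposition \ref{redpositiva}. Your extra bookkeeping (the automatic inclusion $\mathcal{R}(V^*)\subseteq\mathcal{R}(U^*)$ and the passage between $UV^*$ and $VU^*$) merely makes explicit details the paper leaves implicit.
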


\begin{proof} $ \ $

$a)\leftrightarrow b)$ It follows by Theorem \ref{soluciones} and Proposition \ref{redpositiva}.

$a)\leftrightarrow c)$ Suppose that $U^\dagger V\in M^n_+(\mathbb{C})$. Then $U^\dagger T=U^\dagger U-U^\dagger V\in M^n_H(\mathbb{C})$. In addition, $U^\dagger V=U^\dagger U-U^\dagger T\geq 0$. Therefore, $U^\dagger T\leq P_{T^*}$.  Conversely,  if $U^\dagger T\in M^n_H(\mathbb{C})$ and $U^\dagger T \leq P_{T^*}=U^\dagger U$ then $U^\dagger V=U^\dagger U-U^\dagger T \geq 0$. 
\end{proof}

\begin{pro}\label{TU}
Let $T=U-V$ be  a proper splitting of $T$. The following assertions are equivalent:
\begin{enumerate}
\item $U^\dagger T\in M^n_+(\mathbb{C})$;
\item $TU^*\in M^n_+(\mathbb{C})$ and $r(TU^*)=r(T);$
\item  $U^\dagger V\in M^n_H(\mathbb{C})$ and $U^\dagger V\leq P_{T^*}$.
\end{enumerate}
\end{pro}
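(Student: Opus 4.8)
The plan is to mirror the proof of Proposition \ref{UdaggerV positiva}, exploiting the symmetry between $T$ and $V$ inherent in the splitting $T=U-V$. The single algebraic fact driving everything is the identity $U^\dagger U=P_{T^*}$: since $\mathcal{N}(U)=\mathcal{N}(T)$ we have $\mathcal{R}(U^*)=\mathcal{N}(U)^\bot=\mathcal{N}(T)^\bot=\mathcal{R}(T^*)$, so the orthogonal projection $U^\dagger U$ onto $\mathcal{R}(U^*)$ coincides with $P_{T^*}$. Consequently $U^\dagger T+U^\dagger V=U^\dagger U=P_{T^*}$, and I would keep this relation at hand throughout.

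For $a)\leftrightarrow b)$, I would read $U^\dagger T$ as a reduced solution. Since $\mathcal{R}(T)=\mathcal{R}(U)$, Theorem \ref{Douglas} shows that $U^\dagger T$ is the reduced solution of $UX=T$ for the subspace $\mathcal{R}(U^*)$. If $U^\dagger T\in M^n_+(\mathbb{C})$, then $UX=T$ admits a positive semidefinite solution, so Theorem \ref{soluciones} (applied with coefficient matrix $U$ and data $T$) gives $UT^*\in M^m_+(\mathbb{C})$ and $r(UT^*)=r(T)$; as $UT^*$ is then Hermitian, $UT^*=TU^*$ and $r(TU^*)=r(UT^*)=r(T)$, which is $b)$. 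Conversely, if $b)$ holds then $UT^*=(TU^*)^*\in M^m_+(\mathbb{C})$ with $r(UT^*)=r(T)$, so the ``in particular'' clause of Proposition \ref{redpositiva} (again with coefficient matrix $U$) yields $U^\dagger T\in M^n_+(\mathbb{C})$ exactly when $\mathcal{R}(T^*)\subseteq\mathcal{R}(U^*)$; but this inclusion is the equality noted above, so it holds automatically and $a)$ follows.

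For $a)\leftrightarrow c)$, I would substitute $U^\dagger V=P_{T^*}-U^\dagger T$. If $U^\dagger T\in M^n_+(\mathbb{C})$, then it is Hermitian, hence so is $U^\dagger V=P_{T^*}-U^\dagger T$, and $U^\dagger T\geq 0$ forces $U^\dagger V\leq P_{T^*}$, giving $c)$. Conversely, if $U^\dagger V\in M^n_H(\mathbb{C})$ and $U^\dagger V\leq P_{T^*}$, then $U^\dagger T=U^\dagger U-U^\dagger V=P_{T^*}-U^\dagger V\geq 0$, which is $a)$.

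The only genuinely delicate point is the bookkeeping in $a)\leftrightarrow b)$: one must verify that the Hermiticity and range hypotheses of Theorem \ref{soluciones} and Proposition \ref{redpositiva} are satisfied under the substitution ``coefficient matrix $=U$, data $=T$'', and that the required range inclusion $\mathcal{R}(T^*)\subseteq\mathcal{R}(U^*)$ is automatic for a proper splitting. The passage between $UT^*$ and $TU^*$ (via Hermiticity and $r(UT^*)=r(TU^*)$) is routine but should be recorded explicitly. The equivalence $a)\leftrightarrow c)$ presents no obstacle beyond the identity $U^\dagger U=P_{T^*}$.
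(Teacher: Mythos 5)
Your proposal is correct and takes essentially the same route as the paper: the equivalence $a)\leftrightarrow b)$ is precisely the paper's appeal to Theorem \ref{soluciones} and Proposition \ref{redpositiva} (you simply make explicit the bookkeeping the paper leaves implicit, namely the automatic equality $\mathcal{R}(U^*)=\mathcal{R}(T^*)$ for a proper splitting and the passage between $UT^*$ and $TU^*$ via Hermiticity), and your argument for $a)\leftrightarrow c)$ is the paper's own computation with $U^\dagger V=U^\dagger U-U^\dagger T$ and $U^\dagger U=P_{T^*}$. There is nothing to correct.
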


\begin{proof} $ \ $

$a)\leftrightarrow b)$ It follows by Theorem \ref{soluciones} and Proposition \ref{redpositiva}.

$a)\leftrightarrow c)$ If $U^\dagger T\in M^n_+(\mathbb{C})$ then $U^\dagger V=U^\dagger U-U^\dagger T\in M^n_H (\mathbb{C})$. Furthermore $U^\dagger T=U^\dagger U-U^\dagger V\geq 0$. Then $P_{T^*}\geq U^\dagger V$.  Conversely, if  $U^\dagger V\in M^n_H(\mathbb{C})$ and $P_{T^*}\geq U^\dagger V$ then $U^\dagger T=U^\dagger U-U^\dagger V\geq 0$. 
\end{proof}


\section{Reduced solutions and proper splittings of matrices}\label{RsandProper}

In \cite{MR348984} the concept of proper splitting of a matrix $T$ is applied to approximate the best minimum square solution of a system $Tx=w.$ In this section, we shall observe that this class of splitting can also be used to get the reduced solution for $\eme$ of a matrix equation $TX=W.$ To be more specific, let $T\in M^{m\times n}(\mathbb{C})$ and $W\in M^{m\times r}(\mathbb{C})$  such that $\mathcal{R}(W)\subseteq \mathcal{R}(T)$ and let $\eme$ be a subspace of $\mathbb{C}^n$ such that $\eme\overset{.}{+}\mathcal{N}(T)=\mathbb{C}^n$. Let $T=U-V$ be a proper splitting of $T$. Let  $Y_\eme\in M^{n}(\mathbb{C})$ and $Z_\eme \in M^{n\times r}(\mathbb{C})$ be the reduced solutions for $\eme$ of $UY=V$ and $UZ=W,$ respectively. Define the iterative process:
\begin{equation}
X^{i+1}=Y_\eme X^i+ Z_\eme  .\label{proceso iterativo}
\end{equation}
We call (\ref{proceso iterativo}), {\it{the iterative process for $\eme$ of the proper splitting $T=U-V$ with respect to $W.$}}  

In the next theorem,  we prove that whether the iteration (\ref{proceso iterativo}) converges, then it converges  to the reduced solution for $\eme$ of $TX=W.$ Furthermore, we provide an equivalent condition for its convergence.

\begin{thm}\label{convergencia}
Let $T\in M^{m\times n}(\mathbb{C})$ and $W\in M^{m\times r}(\mathbb{C})$ such that $\mathcal{R}(W)\subseteq \mathcal{R}(T)$ and let $\eme$ be a subspace of $\mathbb{C}^n$ such that $\eme\overset{.}{+}\mathcal{N}(T)=\mathbb{C}^n$.  Consider the proper splitting $T= U-V$ of $T$. Then the iterative process (\ref{proceso iterativo}) is convergent for all $X^0$ if and only if $\rho(Y_\eme)<1$. In such case,  the limit of the iterative process (\ref{proceso iterativo}) coincides with $X_\eme$, the reduced solution for $\eme$ of the equation  $TX=W.$ 
\end{thm}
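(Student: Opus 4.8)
The plan is to reduce everything to the single operator $U^\dagger V$ via the correspondence between reduced solutions and oblique projections, then run the standard affine-iteration argument, and finally pin down the limit by a fixed-point characterization. First I would record closed forms for the two reduced solutions. By Theorem \ref{Douglas} the reduced solution for $\mathcal{R}(U^*)=\mathcal{R}(T^*)$ of $UY=V$ is $U^\dagger V$, and by the first item of Remark \ref{RemRed} every reduced solution for an arbitrary complement $\eme$ is obtained by applying $Q:=Q_{\eme//\mathcal{N}(T)}$; hence $Y_\eme=QU^\dagger V$ and likewise $Z_\eme=QU^\dagger W$, both right-hand sides lying in $\mathcal{R}(U)=\mathcal{R}(T)$ by Proposition \ref{propiedades splitting} a). The structural fact I would isolate is that the assignment $B\mapsto QU^\dagger B$, sending an admissible right-hand side to its reduced solution for $\eme$, is \emph{linear}. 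Since $\mathcal{R}(V)\subseteq\mathcal{R}(U)$ and $\mathcal{N}(U)\subseteq\mathcal{N}(V)$ (Proposition \ref{propiedades splitting} a)), Lemma \ref{rhoreducidas} gives $\sigma(Y_\eme)=\sigma(U^\dagger V)$; in particular $\rho(Y_\eme)=\rho(U^\dagger V)$, and because $I-U^\dagger V$ is invertible by Proposition \ref{propiedades splitting} b) we obtain $1\notin\sigma(Y_\eme)$, so $I-Y_\eme$ is invertible.

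For the equivalence I would unroll the recursion to $X^i=Y_\eme^i X^0+\sum_{k=0}^{i-1}Y_\eme^k Z_\eme$. If $\rho(Y_\eme)<1$ then $Y_\eme^i\to 0$ and the geometric series converges to $(I-Y_\eme)^{-1}$, so $X^i\to (I-Y_\eme)^{-1}Z_\eme$ for every $X^0$. Conversely, assuming convergence for all $X^0$, I would subtract the orbits of two different initial data: their difference is $Y_\eme^i(X^0-\widetilde{X}^0)$, and since this converges for every choice of $X^0,\widetilde{X}^0$, the powers $Y_\eme^i$ converge as matrices. Convergence of matrix powers forces every eigenvalue of unit modulus to equal $1$; as $1\notin\sigma(Y_\eme)$, no eigenvalue can sit on the unit circle, whence $\rho(Y_\eme)<1$.

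Finally I would identify the limit by showing $X_\eme$ is the unique fixed point. Writing $U=T+V$ yields $UX_\eme=TX_\eme+VX_\eme=W+VX_\eme$ with $\mathcal{R}(X_\eme)\subseteq\eme$, so $X_\eme$ is precisely the reduced solution for $\eme$ of $UY=VX_\eme+W$ (the right-hand side lies in $\mathcal{R}(U)$ since $\mathcal{R}(V),\mathcal{R}(W)\subseteq\mathcal{R}(T)$). By the linearity of $B\mapsto QU^\dagger B$ this reduced solution splits as $QU^\dagger(VX_\eme)+QU^\dagger W=Y_\eme X_\eme+Z_\eme$, so $X_\eme=Y_\eme X_\eme+Z_\eme$. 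Because $I-Y_\eme$ is invertible the fixed point is unique, forcing the limit $(I-Y_\eme)^{-1}Z_\eme$ to equal $X_\eme$.

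The step I expect to require the most care is the ``only if'' direction: passing from convergence of the affine iteration to $\rho(Y_\eme)<1$ hinges on first extracting convergence of the bare powers $Y_\eme^i$ (via the orbit-difference trick) and then using $1\notin\sigma(Y_\eme)$ to exclude the delicate borderline scenario of a semisimple eigenvalue equal to $1$, for which the powers would converge without the spectral radius dropping below one. It is exactly the invertibility of $I-Y_\eme$ — equivalently $1\notin\sigma(U^\dagger V)$ inherited through $\sigma(Y_\eme)=\sigma(U^\dagger V)$ — that makes the equivalence clean, and I would highlight this as the crux of the argument.
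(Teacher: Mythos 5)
Your proof is correct, but it is organized differently from the paper's, and the difference is instructive. The paper first notes that any limit of (\ref{proceso iterativo}) must satisfy $TX=W$ with $\mathcal{R}(X)\subseteq\eme$, hence equals $X_\eme$; it then applies $U$ to $X^{i+1}-X_\eme$ and uses that $U$ is injective on $\eme$ (since $\eme\overset{.}{+}\mathcal{N}(U)=\mathbb{C}^n$) to obtain the \emph{centered} recursion $X^{i+1}-X_\eme=Y_\eme^{i+1}(X^0-X_\eme)$. This does double duty: because the limit is already identified as $X_\eme$, convergence for all $X^0$ says exactly that $Y_\eme^{i}M\to 0$ for every $M$, so the equivalence with $\rho(Y_\eme)<1$ is immediate and the borderline scenario you flag (powers converging without the spectral radius dropping below one) never arises. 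Your uncentered route --- the closed forms $Y_\eme=Q_{\eme//\mathcal{N}(T)}U^\dagger V$ and $Z_\eme=Q_{\eme//\mathcal{N}(T)}U^\dagger W$, the unrolled orbit with the Neumann series, the orbit-difference trick, and fixed-point uniqueness at the end --- extracts in the necessity direction only that the bare powers $Y_\eme^i$ converge, so you genuinely need $1\notin\sigma(Y_\eme)$, which you correctly import from Proposition \ref{propiedades splitting} b) through the spectral identity $\sigma(Y_\eme)=\sigma(U^\dagger V)$ of Lemma \ref{rhoreducidas}; the paper's argument requires neither ingredient at this stage. In exchange, your version makes explicit what the paper only records afterwards: the identity $\rho(Y_\eme)=\rho(U^\dagger V)$ (which is precisely how Corollary \ref{corrho} is later derived) and the limit formula $(I-Y_\eme)^{-1}Z_\eme=X_\eme$. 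Both arguments are complete; the paper's is shorter, while yours isolates more transparently where the spectral input sits and why invertibility of $I-U^\dagger V$ matters.
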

\begin{proof} It is clear that if the iterative process (\ref{proceso iterativo}) converges to $X\in M^{n\times r}(\mathbb{C})$ then $TX=W$ and $\mathcal{R}(X)\subseteq \eme,$ i.e., $X=X_\eme.$ Now,  $UX_\eme=VX_\eme+W$ and so $U(X^{i+1}-X_\eme)=V(X^{i}-X_\eme)=UY_\eme(X^{i}-X_\eme).$ Then, $X^{i+1}-X_\eme=Y_\eme(X^{i}-X_\eme)=Y_\eme^{i+1}(X^0-X_\eme)$ since $\eme\overset{.}{+}\mathcal{N}(U)=\mathbb{C}^n.$ Thus, the iterative process (\ref{proceso iterativo}) is convergent for all $X^0$ if and only if $\rho(Y_\eme)<1$.
\end{proof}

It should be mentioned that the iterative process (\ref{proceso iterativo}) emerges as a generalization of the iteration introduced in \cite{MR348984} to obtain the best minimum square solution of $Tx=b.$ Indeed,  notice that choosing $W=P_T b$ and $\eme=R(T^*)$ then the iteration (\ref{proceso iterativo}) can be used to obtain the best minimum square error solution of $Tx=b.$
Furthermore, we stress that the iteration (\ref{proceso iterativo}) is practical when it is easier to solve the equations that involve $U$ than the equation that involves $T$. In Section \ref{Sec-Ejemplos} we propose some convenient proper splitting. 

\begin{rems}
\begin{enumerate}
\item Taking into account Remark \ref{RemRed}, we observe that the iterative process (\ref{proceso iterativo}) can also be used for solving matrix equations of the form $TXS=W.$ Evidently, in this case we need a proper splitting for $T$ and a proper splitting for $S$. We recommend \cite{KHORSANDZAK2013269} for other applications of splitting of matrices for solving matrix equations of the form $TXS=W.$
\item As it was pointed out in Remark \ref{RemRed}, the generalized inverses of a matrix $T$ can be described by means of reduced solutions of certain matrix equations.  Therefore, the iterative process (\ref{proceso iterativo}) can be applied to approximate generalized inverses of a matrix. We recommend \cite{MR2115593} and \cite{MR2490784} for other perspective of the use of splitting of matrices or operators to get generalized inverses.  
\end{enumerate}
\end{rems}

By Lemma \ref{rhoreducidas}, Proposition \ref{propiedades splitting} and Theorem \ref{convergencia} we get the next result:

\begin{cor}\label{corrho} Let $T= U-V$ be a proper splitting of $T\in M^{m\times n}(\mathbb{C})$ and let $W\in M^{m\times r}(\mathbb{C})$ such that $\mathcal{R}(W)\subseteq \mathcal{R}(T)$. Then, the iterative process of the proper splitting $T=U-V$ converges for some $\eme$ such that $\eme\overset{.}{+}\mathcal{N}(T)=\mathbb{C}^n$ if and only if    the iterative process of the proper splitting $T=U-V$ converges for all $\eme$ such that $\eme\overset{.}{+}\mathcal{N}(T)=\mathbb{C}^n.$ In particular, the iterative process (\ref{proceso iterativo}) is convergent if and only if $\rho(U^\dagger V)<1.$
\end{cor}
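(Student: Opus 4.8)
Corollary \ref{corrho} asserts two things: first, an equivalence between convergence of the iterative process for \emph{some} complement $\eme$ of $\mathcal{N}(T)$ and convergence for \emph{all} such complements; and second, the characterization that (\ref{proceso iterativo}) converges iff $\rho(U^\dagger V)<1$. The plan is to reduce everything to spectral radii of the reduced solutions $Y_\eme$ of $UY=V$.

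First I would apply Theorem \ref{convergencia} to replace the word ``converges'' by the condition $\rho(Y_\eme)<1$ in each instance: convergence for $\eme$ is equivalent to $\rho(Y_\eme)<1$. The key point enabling the \emph{some}$\Leftrightarrow$\emph{all} equivalence is that $\rho(Y_\eme)$ does not depend on $\eme$. To see this, note that $UY=V$ is a matrix equation whose coefficient matrix is $U$, and by Proposition \ref{propiedades splitting}\emph{a)} we have $\mathcal{R}(V)\subseteq\mathcal{R}(T)=\mathcal{R}(U)$ and $\mathcal{N}(T)=\mathcal{N}(U)\subseteq\mathcal{N}(V)$. These are exactly the hypotheses $\mathcal{R}(W)\subseteq\mathcal{R}(T)$ and $\mathcal{N}(T)\subseteq\mathcal{N}(W)$ of Lemma \ref{rhoreducidas} (with $W$ replaced by $V$ and $T$ replaced by $U$). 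Hence Lemma \ref{rhoreducidas}\emph{b)} gives $\rho(Y_\eme)=\rho(U^\dagger V)$ for every admissible $\eme$, since $U^\dagger V$ is precisely the reduced solution for $\mathcal{R}(U^*)=\mathcal{R}(T^*)$ of $UY=V$ by the last statement of Theorem \ref{Douglas}.

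From this the whole corollary drops out. Since $\rho(Y_\eme)=\rho(U^\dagger V)$ is a single number independent of $\eme$, the condition $\rho(Y_\eme)<1$ holds for some $\eme$ iff it holds for all $\eme$, which by Theorem \ref{convergencia} is the desired \emph{some}$\Leftrightarrow$\emph{all} equivalence. The ``in particular'' clause is then immediate: convergence (for any, equivalently every, $\eme$) is equivalent to $\rho(Y_\eme)<1$, which equals $\rho(U^\dagger V)<1$.

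I do not expect a genuine obstacle here, as the statement is essentially a bookkeeping combination of three earlier results; the only point requiring care is the verification that the hypotheses of Lemma \ref{rhoreducidas} are met by the pair $(U,V)$, which is exactly what Proposition \ref{propiedades splitting}\emph{a)} supplies, and the identification of $U^\dagger V$ as the reduced solution of $UY=V$ for the complement $\mathcal{R}(U^*)$, which follows from the final identity $X_{\mathcal{R}(T^*)}=T^\dagger W$ in Theorem \ref{Douglas} applied with $T=U$ and $W=V$.
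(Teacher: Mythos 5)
Your proof is correct and takes essentially the same route as the paper: the authors state the corollary as an immediate consequence of Lemma \ref{rhoreducidas}, Proposition \ref{propiedades splitting} and Theorem \ref{convergencia}, which is exactly the chain you assemble. Your explicit verification that the pair $(U,V)$ satisfies the hypotheses of Lemma \ref{rhoreducidas} and that $U^\dagger V$ is the reduced solution of $UY=V$ for $\eme=\mathcal{R}(U^*)=\mathcal{R}(T^*)$ simply fills in the bookkeeping the paper leaves implicit.
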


The following result follows from Propositions \ref{redhermitiana}, \ref{redpositiva} and Theorem \ref{convergencia}.

\begin{cor} Let $T,W\in M^{m\times n}(\mathbb{C})$ such that $\mathcal{R}(W)\subseteq \mathcal{R}(T)$ and let $\eme$ be a subspace of $\mathbb{C}^n$ such that  $\eme\overset{.}{+}\mathcal{N}(T)=\mathbb{C}^n$. Consider the iterative process for $\eme$ of the proper splitting $T=U-V$ with respect to $W,$ given by (\ref{proceso iterativo}). Then the following assertions hold:
\begin{enumerate}
\item If $WT^*\in M^m_H(\mathbb{C})$ and $\rho(Y_\eme)<1$ then the iterative process (\ref{proceso iterativo}) converges to an Hermitian reduced solution of $TX=W$ if and only if $\mathcal{R}(W^*)\subseteq \eme$.
\item If $WT^*\in M^m_H(\mathbb{C})$, $r(WT^*)=r(W)$ and $\rho(Y_\eme)<1$ then the iterative process (\ref{proceso iterativo}) converges to a positive semidefinite reduced solution of $TX=W$ if and only if $\mathcal{R}(W^*)\subseteq \eme$.
\end{enumerate}
\end{cor}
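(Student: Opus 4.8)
The plan is to combine the characterization of Hermitian and positive semidefinite reduced solutions obtained in Propositions \ref{redhermitiana} and \ref{redpositiva} with the convergence result of Theorem \ref{convergencia}. The key observation is that Theorem \ref{convergencia} already identifies the limit of the iterative process: when $\rho(Y_\eme)<1$, the iteration (\ref{proceso iterativo}) converges to $X_\eme$, the reduced solution for $\eme$ of $TX=W$. Thus the question of whether the limit is Hermitian (respectively positive semidefinite) reduces entirely to determining when $X_\eme$ has that property, which is precisely what the earlier propositions answer.

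For part \emph{a)}, I would first apply Theorem \ref{convergencia}: under the hypothesis $\rho(Y_\eme)<1$, the process (\ref{proceso iterativo}) converges, and its limit is $X_\eme$. Therefore the iterative process converges to an Hermitian reduced solution if and only if $X_\eme$ is Hermitian. Now I invoke Proposition \ref{redhermitiana} with the standing assumption $WT^*\in M^m_H(\mathbb{C})$ (equivalently $TW^*\in M^m_H(\mathbb{C})$, taking adjoints). That proposition states that, given $\mathcal{R}(W)\subseteq \mathcal{R}(T)$, a subspace $\eme$ with $\eme\overset{.}{+}\mathcal{N}(T)=\mathbb{C}^n$, and $TW^*$ Hermitian, the reduced solution $X_\eme$ is Hermitian if and only if $\mathcal{R}(W^*)\subseteq \eme$. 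Chaining these two equivalences yields the claim.

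For part \emph{b)}, the argument is structurally identical but uses Proposition \ref{redpositiva} in place of Proposition \ref{redhermitiana}. Again Theorem \ref{convergencia} guarantees convergence to $X_\eme$, so the limit is positive semidefinite exactly when $X_\eme$ is. The hypotheses $WT^*\in M^m_H(\mathbb{C})$ and $r(WT^*)=r(W)$ are exactly the conditions $TW^*\in M^m_+(\mathbb{C})$ and $r(TW^*)=r(W)$ required by Proposition \ref{redpositiva} (once one checks that $WT^*$ Hermitian together with the rank condition forces $TW^*$ to be positive semidefinite, which follows from Theorem \ref{soluciones}). Under those hypotheses, Proposition \ref{redpositiva} asserts that $X_\eme$ is positive semidefinite if and only if $\mathcal{R}(W^*)\subseteq \eme$, and the conclusion follows.

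The proof is essentially a bookkeeping exercise: the substantive content lives in the previously established propositions, and the present corollary merely transports their range conditions across the limit guaranteed by Theorem \ref{convergencia}. The only point requiring mild care is matching the hypotheses stated here in terms of $WT^*$ to the hypotheses of Propositions \ref{redhermitiana} and \ref{redpositiva}, which are phrased in terms of $TW^*$; this is handled by taking conjugate transposes and noting that $\mathcal{R}(W)\subseteq\mathcal{R}(T)$ is in force throughout. I do not anticipate any genuine obstacle beyond this notational reconciliation.
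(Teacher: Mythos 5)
Your overall route coincides with the paper's: the proof given there is a one-line citation of Theorem \ref{convergencia} (which identifies the limit of (\ref{proceso iterativo}) as $X_\eme$ whenever $\rho(Y_\eme)<1$) together with Propositions \ref{redhermitiana} and \ref{redpositiva}. Your part \emph{a)}, including the adjoint reconciliation $TW^*=(WT^*)^*$ (so $WT^*$ is Hermitian if and only if $TW^*$ is), is correct and is exactly what the paper intends.

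In part \emph{b)}, however, your parenthetical bridging claim is false: $WT^*\in M^m_H(\mathbb{C})$ together with $r(WT^*)=r(W)$ does \emph{not} force $TW^*\in M^m_+(\mathbb{C})$, and Theorem \ref{soluciones} cannot deliver this --- it asserts that a positive semidefinite solution exists if and only if $TW^*\in M^m_+(\mathbb{C})$ and $r(TW^*)=r(W)$; it has no mechanism for upgrading Hermitian to positive semidefinite. Concretely, take $T=I$ and $W=-I$: then $WT^*=-I$ is Hermitian, $r(WT^*)=r(W)=n$, $\mathcal{R}(W)\subseteq\mathcal{R}(T)$, and with $\eme=\mathbb{C}^n$ one has $\mathcal{R}(W^*)\subseteq\eme$, yet the reduced solution $X_\eme=-I$ is Hermitian but not positive semidefinite, so the ``if'' implication you are trying to establish fails under the hypotheses as you have read them. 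What this reveals is that the hypothesis in item \emph{b)} must be read as $WT^*\in M^m_+(\mathbb{C})$ (evidently a typo in the statement, consistent with Theorem \ref{soluciones} and Proposition \ref{redpositiva}); under that reading your reconciliation is immediate, since $TW^*=(WT^*)^*$ is then positive semidefinite with $r(TW^*)=r(WT^*)=r(W)$, and the rest of your argument --- Theorem \ref{convergencia} identifies the limit as $X_\eme$, Proposition \ref{redpositiva} characterizes when $X_\eme\in M^n_+(\mathbb{C})$ --- goes through verbatim. Note also that the ``only if'' direction of \emph{b)} needs no positivity hypothesis at all: if the limit $X_\eme$ is positive semidefinite, then $\mathcal{R}(W^*)=\mathcal{N}(W)^\perp=\mathcal{N}(X_\eme)^\perp=\mathcal{R}(X_\eme)\subseteq\eme$. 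Rather than asserting a false implication and attributing it to Theorem \ref{soluciones}, you should either flag the typo and prove the corrected statement, or restrict the claim to the direction that survives.
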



\section{Convergence of proper splittings}\label{convergenciaps}

In the previous section, we defined the iterative process (\ref{proceso iterativo}) on the basis of a proper splitting  $T=U-V$ and we proved that this iterative process is convergent (in which case it converges to the reduced solution for $\eme$ of $TX=W$) if and only if  $\rho(U^\dagger V)<1.$ Accordingly, this section is devoted to provide conditions that guarantee  $\rho(U^\dagger V)<1.$ Most of our results are inspired in \cite{MR348984}, \cite{MR1695402}, \cite{MR1113154}. In order to compare our results with those of the previously mentioned articles, in what follows:
\begin{itemize}
\item by $K$  we denote a full cone in $\mathbb{R}^n$ and given $A\in M^{m\times n}(\mathbb{R})$, by $A\in \cI_K$ we mean that $A$ leaves the full cone $K$ invariant, i.e., $AK\subset K;$ 
\item by $A\in M_+^n(\mathbb{C})$ we  mean $A\geq 0$ where $\geq$ denotes the L\"owner order in $M^n(\mathbb{C});$ 
\item by $A\in M_{\succ 0}^n(\mathbb{R}) $ we mean that $A$ has nonnegative entries and by $\succ$ we denote  the usual order in $M^{m\times n}(\mathbb{R})$ induced by nonnegative matrices.
\end{itemize}

Evidently, $ M_{\succ 0}^n(\mathbb{R})= \cI_K$ with $K=\mathbb{R}^n_+.$ In this section, we work with the L\"owner order. Theorems \ref{Greville} and \ref{lowner-mp} play a key role for proving the main results of this section.

\begin{pro}\label{equiTdaggerV}
Let $T=U-V$ be a proper splitting of $T$. Then the following assertions are equivalent: 
\begin{enumerate}
\item $T^\dagger V\in M^n_+(\mathbb{C})$;
\item $TV^*\in M^n_+(\mathbb{C})$ and $r(TV^*)=r(V);$
\item $0\leq U^\dagger V \leq P_{V^*}$;
\item $U^\dagger V \in M^n_+(\mathbb{C})$ and $\rho(U^\dagger V)=\displaystyle\frac{\rho(T^\dagger V)}{1+\rho(T^\dagger V)}<1$.
\end{enumerate}
\end{pro}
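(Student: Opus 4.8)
The plan rests on the algebraic identity linking $U^\dagger V$ and $T^\dagger V$ that comes out of Proposition \ref{propiedades splitting}. Writing $M=U^\dagger V$, the formula $T^\dagger=(I-U^\dagger V)^{-1}U^\dagger$ gives $T^\dagger V=(I-M)^{-1}M$, and hence
\[ I+T^\dagger V=(I-M)^{-1}, \qquad U^\dagger V=M= I-(I+T^\dagger V)^{-1}=(I+T^\dagger V)^{-1}T^\dagger V. \]
Thus $U^\dagger V$ and $T^\dagger V$ are rational functions of one another; in particular they commute, and since $1\notin\sigma(M)$ (because $I-U^\dagger V$ is invertible) their spectra correspond under the increasing bijection $t\mapsto t/(1-t)$ of $[0,1)$ onto $[0,\infty)$, with inverse $s\mapsto s/(1+s)$. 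This correspondence is exactly what will yield the spectral radius identity in $d)$. I would prove the statement through the cycle $a)\Rightarrow c)\Rightarrow d)\Rightarrow a)$, handling $a)\leftrightarrow b)$ on its own.

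For $a)\leftrightarrow b)$ I would argue as in Propositions \ref{UdaggerV positiva} and \ref{TU}, replacing $U$ by $T$. Since $\mathcal{R}(V)\subseteq\mathcal{R}(T)$ (Proposition \ref{propiedades splitting}), $T^\dagger V$ is the reduced solution for $\mathcal{R}(T^*)$ of $TX=V$. If $T^\dagger V\in M^n_+(\mathbb{C})$, it is a positive semidefinite solution of $TX=V$, so Theorem \ref{soluciones} forces $TV^*\in M^n_+(\mathbb{C})$ and $r(TV^*)=r(V)$. Conversely, those two conditions let me invoke the ``in particular'' clause of Proposition \ref{redpositiva} with $W=V$: then $T^\dagger V\in M^n_+(\mathbb{C})$ iff $\mathcal{R}(V^*)\subseteq\mathcal{R}(T^*)$, and the latter inclusion is automatic for a proper splitting, since $\mathcal{N}(T)\subseteq\mathcal{N}(V)$ gives $\mathcal{R}(V^*)\subseteq\mathcal{R}(T^*)$ by taking orthogonal complements.

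For the cycle, assume first $a)$, i.e. $T^\dagger V\geq 0$. Then $I+T^\dagger V$ is positive definite, so $M=(I+T^\dagger V)^{-1}T^\dagger V\geq 0$ as a product of commuting positive semidefinite matrices, and the spectral correspondence gives $\sigma(M)\subseteq[0,1)$, hence $\rho(M)<1$. Moreover $M$ is Hermitian with $\mathcal{N}(V)\subseteq\mathcal{N}(M)$, so $\mathcal{R}(M)=\mathcal{N}(M)^\perp\subseteq\mathcal{R}(V^*)$; since $M\geq 0$ is supported on $\mathcal{R}(V^*)$ and $\rho(M)<1$, the difference $P_{V^*}-M$ acts as $I-M\geq 0$ on $\mathcal{R}(V^*)$ and as $0$ on $\mathcal{N}(V)$, giving $M\leq P_{V^*}$, which is $c)$. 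Assuming $c)$, from $0\leq M\leq P_{V^*}\leq I$ I get $\rho(M)\leq 1$, and $1\notin\sigma(M)$ upgrades this to $\rho(M)<1$; then $T^\dagger V=(I-M)^{-1}M=\sum_{k\geq 1}M^k\geq 0$, and the spectral mapping theorem applied to $t\mapsto t/(1-t)$ (using $\rho(M)=\max\sigma(M)$ since $M\geq 0$) gives $\rho(T^\dagger V)=\rho(M)/(1-\rho(M))$, equivalently $\rho(M)=\rho(T^\dagger V)/(1+\rho(T^\dagger V))<1$, which is $d)$. Finally, $d)$ supplies $M\geq 0$ and $\rho(M)<1$, so $T^\dagger V=\sum_{k\geq 1}M^k\geq 0$, returning to $a)$.

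The genuinely new content, and the step I expect to be the main obstacle, is the transfer between the L\"owner orders on $U^\dagger V$ and $T^\dagger V$. One must verify carefully that the bound $U^\dagger V\leq P_{V^*}$ encodes precisely the two facts $\mathcal{R}(U^\dagger V)\subseteq\mathcal{R}(V^*)$ and $\rho(U^\dagger V)\leq 1$, and that the invertibility of $I-U^\dagger V$ is exactly what converts $\rho(U^\dagger V)\leq 1$ into the strict inequality $\rho(U^\dagger V)<1$ required both to sum the Neumann series $\sum_{k\geq 1}(U^\dagger V)^k$ and to legitimize the eigenvalue map $t\mapsto t/(1-t)$ on $\sigma(U^\dagger V)$. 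Everything else is bookkeeping with the identity $I+T^\dagger V=(I-U^\dagger V)^{-1}$ and the fact that commuting positive semidefinite matrices have positive semidefinite products.
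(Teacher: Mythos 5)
Your proposal is correct, and at the core equivalence it takes a genuinely different route from the paper. The parts that coincide: your $a)\leftrightarrow b)$ is exactly the paper's (Theorem \ref{soluciones} plus the ``in particular'' clause of Proposition \ref{redpositiva}, with $\mathcal{R}(V^*)\subseteq\mathcal{R}(T^*)$ automatic from $\mathcal{N}(T)\subseteq\mathcal{N}(V)$), your $d)\Rightarrow a)$ is the paper's Neumann-series argument verbatim, and your spectral-mapping derivation of $\rho(U^\dagger V)=\rho(T^\dagger V)/(1+\rho(T^\dagger V))$ is a functional-calculus rephrasing of the paper's eigenvector computation $T^\dagger Vx=\lambda x\leftrightarrow U^\dagger Vx=\frac{\lambda}{1+\lambda}x$. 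The real divergence is at $a)\leftrightarrow c)$: the paper dualizes, converting $0\leq T^\dagger V$ through a chain of Moore--Penrose identities into $0\leq P_{V^*}\leq (U^\dagger V)^\dagger$, and then invokes the antitonicity Theorem \ref{lowner-mp} of Baksalary--Nordstr\"{o}m--Styan in both directions, each time verifying the rank/range side conditions such as $\mathcal{R}((U^\dagger V)^\dagger-P_{V^*})\cap\mathcal{N}(U^\dagger V)=\{0\}$; you bypass Theorem \ref{lowner-mp} entirely, extracting from $I+T^\dagger V=(I-U^\dagger V)^{-1}$ the commutativity of $M=U^\dagger V$ with $T^\dagger V$, positivity of $M$ as a product of commuting positive semidefinite matrices, $\rho(M)<1$ by the spectral correspondence, and $M\leq P_{V^*}$ from the orthogonal decomposition $\mathbb{C}^n=\mathcal{R}(V^*)\oplus\mathcal{N}(V)$, which reduces $M$ because $M=M^*$ and $\mathcal{N}(V)\subseteq\mathcal{N}(M)$ (your use of $\|M\|=\rho(M)<1$ and of the invertibility of $I-M$ to exclude $1\in\sigma(M)$ is sound). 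Your cyclic scheme $a)\Rightarrow c)\Rightarrow d)\Rightarrow a)$ also replaces the paper's two separate pairwise equivalences. What your route buys is a shorter, self-contained and more elementary proof that never touches order reversal of generalized inverses; what the paper's route buys is methodological: its dagger-chain plus Theorem \ref{lowner-mp} bookkeeping is precisely the technique reused in the comparison Theorems \ref{comp1} and \ref{comp2}, where two different splittings are involved and your commutativity trick is unavailable, so the paper's proof doubles as a template for those later results.
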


\begin{proof} $ \ $

$a) \leftrightarrow b)$ It follows from Theorem \ref{soluciones} and Proposition \ref{redpositiva}.

$a) \leftrightarrow c)$ Let $T=U-V$ be a proper splitting of $T$. Since $T^\dagger =(I-U^\dagger V)^{-1}U^\dagger$ then
\begin{eqnarray}
0\leq T^\dagger V &\leftrightarrow& 0\leq (I-U^\dagger V)^{-1}U^\dagger V \leftrightarrow 0 \leq \left((I-U^\dagger V)^{-1}U^\dagger V \right)^\dagger  \nonumber \\
&\leftrightarrow& 0\leq \left(U^\dagger V \right)^\dagger (I-U^\dagger V) \leftrightarrow  0\leq \left(U^\dagger V \right)^\dagger - P_{\mathcal{N}(U^\dagger V)^\bot}\nonumber\\
&\leftrightarrow& 0\leq P_{V^*}\leq \left(U^\dagger V\right)^\dagger. \label{TdaggerV}
\end{eqnarray}
 Now, if $0\leq P_{V^*}\leq \left(U^\dagger V\right)^\dagger$ then $\mathcal{R}(P_{V^*})\subseteq \mathcal{R}((U^\dagger V)^\dagger)$. So,   $\mathcal{R}((U^\dagger V)^\dagger - P_{V^*})\cap \mathcal{N}((U^\dagger V)^\dagger)\subseteq \mathcal{R}((U^\dagger V)^\dagger)\cap\mathcal{N}((U^\dagger V)^\dagger)=\{0\}$. On the other hand,  if $0\leq P_{V^*}\leq \left(U^\dagger V\right)^\dagger$ then $\mathcal{R}(P_{V^*})\subseteq \mathcal{R}((U^\dagger V)^\dagger)=\mathcal{R}(U^\dagger V)$. Now, $\mathcal{R}(P_{V^*}-U^\dagger V)\cap\mathcal{N}(P_{V^*})\subseteq \mathcal{R}(U^\dagger V)\cap\mathcal{N}(V)=\mathcal{R}(V^*(U^*)^\dagger)\cap\mathcal{N}(V)\subseteq \mathcal{R}(V^*)\cap\mathcal{N}(V)=\{0\}$. So that, by Theorem \ref{lowner-mp}, we get $0\leq U^\dagger V \leq P_{V^*}$. Conversely, if $0\leq U^\dagger V \leq P_{V^*}$ then $\mathcal{R}((U^\dagger V)^\dagger)=\mathcal{R}(U^\dagger V)\subseteq \mathcal{R}(V^*)$. In addition, as $\mathcal{N}(U^\dagger V)=
\mathcal{N}(V)$ then it holds that $\mathcal{R}(P_{V^*}-U^\dagger V)\cap\mathcal{N}(P_{V^*})=\{0\}$ and $\mathcal{R}((U^\dagger V)^\dagger -P_{V^*})\cap\mathcal{N}(U^\dagger V)=\{0\}$. Therefore, applying again Theorem \ref{lowner-mp}, $0\leq P_{V^*}\leq \left(U^\dagger V\right)^\dagger$. Then, by the equivalences (\ref{TdaggerV}), the assertion follows.

$a)\leftrightarrow d)$ First, let us prove that $\lambda\in \sigma(T^\dagger V)$ if and only if $\mu=\frac{\lambda}{1+\lambda}\in \sigma(U^\dagger V)$. In fact, 
\begin{eqnarray}
T^\dagger Vx=\lambda x &\leftrightarrow & (I-U^\dagger V)^{-1}U^\dagger Vx=\lambda x \leftrightarrow U^\dagger Vx= (I-U^\dagger V) \lambda x\nonumber\\
&\leftrightarrow& U^\dagger V(1+\lambda)x=\lambda x \leftrightarrow U^\dagger Vx=\displaystyle\frac{\lambda}{1+\lambda} x. \nonumber 
\end{eqnarray}
Assume that $T^\dagger V \in M^n_+(\mathbb{C})$. Then $\rho(T^\dagger V)\in \sigma(T^\dagger V)$ and $\mu=\displaystyle\frac{\lambda}{1+\lambda}$ achieves its maximum when $\lambda=\rho(T^\dagger V)$. Therefore $\rho(U^\dagger V)=\displaystyle\frac{\rho(T^\dagger V)}{1+\rho(T^\dagger V)}<1$. Conversely,  if $U^\dagger V\in M^n_+(\mathbb{C})$ and $\rho(U^\dagger V)=\displaystyle\frac{\rho(T^\dagger V)}{1+\rho(T^\dagger V)}<1$ then $\|U^\dagger V\|=\rho(U^\dagger V)<1$. Then $(I-U^\dagger V)^{-1}=\sum\limits_{k=0}^{\infty}(U^\dagger V)^k$. So that $T^\dagger V= (I-U^\dagger V)^{-1}U^\dagger V=\sum\limits_{k=1}^{\infty}(U^\dagger V)^k$. Thus, $T^\dagger V\in M^n_+(\mathbb{C})$. 

\end{proof}

\begin{rem}
Similar results to Proposition \ref{equiTdaggerV} are disseminated in the literature in different contexts. For example, in \cite[Theorem 2]{MR348984}, Berman and Plemmons proved that:

If $T=U-V$ is a proper splitting of $T\in M^{m\times n}(\mathbb{R})$ with $U^\dagger V\in \cI_K$ then, $T^\dagger V\in \cI_K$ if and only if $\rho(U^\dagger V)=\displaystyle\frac{\rho(T^\dagger V)}{1+\rho(T^\dagger V)}<1$. 

Evidently, the result holds if $\cI_K$ is replaced by $M_{\succ 0}^n(\mathbb{R})$. The particular case $\cI_K=M_{\succ 0}^n(\mathbb{R})$ together with other equivalent conditions can be found in \cite[Lemma 2.6]{MR1113154} and   \cite[Lemma 3.5]{MR3141710}. 

Later,   Climent and Perea proved that Berman and Plemmons' result holds if  $\cI_K$ is replaced by  $M_+^n(\mathbb{C})$ and $T,U$ are non singular matrices (see \cite[Theorem 2.5]{MR1695402}). Finally, in \cite[Theorem 3]{MR2198937}, Climent et al. unified \cite[Theorem 2.5]{MR1695402} and \cite[Lemma 2.6]{MR1113154} by considering a partial order induced by a positivity cone of matrices. 

Nevertheless, all these results include the general hypotheses $U^\dagger V\in \cI_K$ or $U^{-1} V\in M_+^n(\mathbb{C})$. Notice that in Proposition \ref{equiTdaggerV} we relaxed this general assumption. However, let us observe that this condition can not be omitted in the context of nonnegative matrices. In fact,  $T^{-1}V\succ 0$ does not imply $U^{-1}V\succ 0$, in general. For example, consider the proper splitting of  
$T=\left(\begin{array}{cc}
3 & -1 \\
0 & 2
\end{array}\right)$ given by
 $T=\left(\begin{array}{cc}
4 & 2 \\
1 & 2
\end{array}\right) - \left(\begin{array}{cc}
1 & 3 \\
1 & 0
\end{array}\right)$. Then,  $T^{-1}V=\left(\begin{array}{cc}
1/2 & 1 \\
1/2 & 0
\end{array}\right)\succ 0,$ but $U^{-1}V=\left(\begin{array}{cc}
0 & 1 \\
1/2 & -1/2
\end{array}\right)\nsucc 0$.

\end{rem}

\begin{pro}\label{H-rho-CS}
Let $T=U-V$ be proper splitting of $T$. Then the following assertions are equivalent:
\begin{enumerate}
\item $0\leq U^\dagger T\leq P_{T^*};$
\item $U^\dagger V\in M^n_+(\mathbb{C})$ and $\rho(U^\dagger V)=\displaystyle\frac{\rho(T^\dagger U)-1}{\rho(T^\dagger U)}<1$.
\end{enumerate}
\end{pro}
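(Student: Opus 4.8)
The plan is to first reduce condition $a)$ to the equivalent statement $0\le U^\dagger V\le P_{T^*}$, and then to set up an eigenvalue correspondence between $T^\dagger U$ and $U^\dagger V$ that produces the spectral radius identity. For the reduction I would use that $\mathcal{N}(U)=\mathcal{N}(T)$ forces $U^\dagger U=P_{T^*}$, so that $U^\dagger T=U^\dagger U-U^\dagger V=P_{T^*}-U^\dagger V$. Since $0\le U^\dagger T$ already entails that $U^\dagger T$, and hence $U^\dagger V=P_{T^*}-U^\dagger T$, is Hermitian, the two inequalities in $a)$ translate termwise: $0\le U^\dagger T$ is exactly $U^\dagger V\le P_{T^*}$, while $U^\dagger T\le P_{T^*}$ is exactly $0\le U^\dagger V$. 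Thus $a)$ is equivalent to $0\le U^\dagger V\le P_{T^*}$, which is Proposition \ref{TU} supplemented by this one extra L\"owner inequality.

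Next I would establish the spectral link. Using Proposition \ref{propiedades splitting} c), write $T^\dagger U=(I-U^\dagger V)^{-1}U^\dagger U=(I-U^\dagger V)^{-1}P_{T^*}$. The key observation is that any eigenvector $x$ of $T^\dagger U$ for a nonzero eigenvalue $\lambda$ lies in $\mathcal{R}(T^\dagger)=\mathcal{R}(T^*)$, so that $P_{T^*}x=x$; applying $I-U^\dagger V$ to $T^\dagger U x=\lambda x$ then gives $x=\lambda(I-U^\dagger V)x$, whence $U^\dagger V x=\tfrac{\lambda-1}{\lambda}x$. Conversely, an eigenvector of $U^\dagger V$ for a nonzero $\mu$ already lies in $\mathcal{R}(U^\dagger V)\subseteq\mathcal{R}(T^*)$ and is an eigenvector of $T^\dagger U$ for $\tfrac{1}{1-\mu}$, the invertibility of $I-U^\dagger V$ from Proposition \ref{propiedades splitting} b) ensuring $1\notin\sigma(U^\dagger V)$ so that this fraction is legitimate. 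This gives a bijection $\lambda\leftrightarrow\tfrac{\lambda-1}{\lambda}$ between the nonzero spectra, the remaining eigenvalues of both operators being $0$ and supported on $\mathcal{N}(T)$.

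Finally I would combine the two ingredients. For $a)\Rightarrow b)$: from $0\le U^\dagger V\le P_{T^*}$ the matrix $U^\dagger V$ is positive semidefinite, so $\rho(U^\dagger V)$ equals its largest eigenvalue; the bound $U^\dagger V\le P_{T^*}\le I$ caps the eigenvalues at $1$, and invertibility of $I-U^\dagger V$ excludes the value $1$, so every eigenvalue lies in $[0,1)$ and $\rho(U^\dagger V)<1$. Because $\lambda\mapsto\tfrac{1}{1-\lambda}$ is increasing on $[0,1)$ with values $\ge1>0$, the correspondence carries the top eigenvalue of $U^\dagger V$ to $\rho(T^\dagger U)=\tfrac{1}{1-\rho(U^\dagger V)}$ (the zero eigenvalues from $\mathcal{N}(T)$ being harmless), which rearranges to $\rho(U^\dagger V)=\tfrac{\rho(T^\dagger U)-1}{\rho(T^\dagger U)}<1$. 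For $b)\Rightarrow a)$ I would only need $U^\dagger V\ge0$ together with $\rho(U^\dagger V)<1$: positivity forces the eigenvalues into $[0,\rho(U^\dagger V)]\subseteq[0,1)$, hence $U^\dagger V\le P_{T^*}$, and with $U^\dagger V\ge0$ this is the equivalent form of $a)$.

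I expect the main obstacle to be the strictness $\rho(U^\dagger V)<1$ in the direction $a)\Rightarrow b)$: the L\"owner bound $U^\dagger V\le P_{T^*}$ by itself only yields eigenvalues $\le1$, and one must invoke the invertibility of $I-U^\dagger V$ to rule out the boundary value $1$ and keep the rational map $\lambda\mapsto\tfrac{\lambda-1}{\lambda}$ well behaved. Some care is also needed to confirm that $\rho(U^\dagger V)$ is genuinely attained at an eigenvalue (so that positive semidefiniteness is really used) and that the zero eigenvalues arising from $\mathcal{N}(T)$ influence neither spectral radius.
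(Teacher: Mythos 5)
Your proposal is correct, and its engine --- the eigenvector correspondence $\lambda \leftrightarrow \frac{\lambda-1}{\lambda}$ obtained from $T^\dagger U=(I-U^\dagger V)^{-1}U^\dagger U=(I-U^\dagger V)^{-1}P_{T^*}$ together with $U^\dagger U=P_{T^*}$ --- is exactly the paper's, but you organize both directions differently. In the forward direction the paper argues via two one-sided inequalities: an eigenvector for $\rho(U^\dagger V)$ (using $U^\dagger V\geq 0$) yields $\frac{1}{1-\rho(U^\dagger V)}\leq\rho(T^\dagger U)$, and an eigenvector for $\rho(T^\dagger U)$ yields the reverse bound; for the latter it must first know $\rho(T^\dagger U)\in\sigma(T^\dagger U)$, which it gets from $T^\dagger U\geq 0$ via the identity $(T^\dagger U)^\dagger=U^\dagger T$ of Proposition \ref{propiedades splitting} d). You bypass positivity of $T^\dagger U$ entirely: every nonzero eigenvalue of $T^\dagger U$ is of the form $\frac{1}{1-\mu}$ with $\mu\in\sigma(U^\dagger V)\subseteq[0,\rho(U^\dagger V)]$, so monotonicity of $\lambda\mapsto\frac{1}{1-\lambda}$ on $[0,1)$ pins down $\rho(T^\dagger U)=\frac{1}{1-\rho(U^\dagger V)}$ exactly, with the strictness $\rho(U^\dagger V)<1$ coming, as you correctly flag, from invertibility of $I-U^\dagger V$ (Proposition \ref{propiedades splitting} b)) ruling out the boundary eigenvalue $1$. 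For $b)\Rightarrow a)$ the paper expands $(I-U^\dagger V)^{-1}=\sum_{k\geq 0}(U^\dagger V)^k$ to get $T^\dagger U=P_{U^*}+\sum_{k\geq 1}(U^\dagger V)^k\geq 0$ and then dualizes through the Moore--Penrose inverse, whereas you argue purely in the L\"owner order; your route is shorter, though you should make explicit the one-line fact that $\mathcal{R}(U^\dagger V)\subseteq\mathcal{R}(U^\dagger)=\mathcal{R}(T^*)$, which is what upgrades ``$U^\dagger V\geq 0$ with spectrum in $[0,1)$'' from $U^\dagger V\leq\rho(U^\dagger V)\,I$ to the compressed bound $U^\dagger V=P_{T^*}U^\dagger V P_{T^*}\leq P_{T^*}$. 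One cosmetic inaccuracy: your claim of a bijection between the \emph{nonzero} spectra is not literally right, since $\lambda=1\in\sigma(T^\dagger U)$ corresponds to $\mu=0$ with eigenvector in $\mathcal{R}(T^*)$ (take $V=0$: $\sigma(U^\dagger V)=\{0\}$ while $1\in\sigma(T^\dagger U)$); this does not affect your spectral-radius computation, which only uses the two directions of the correspondence you actually proved.
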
 

\begin{proof}
Suppose that $0\leq U^\dagger T\leq P_{T^*}=U^\dagger U$. Then, $0\leq U^\dagger (U-T)=U^\dagger V$ and so $\rho(U^\dagger V)\in \sigma(U^\dagger V)$.  Then there exists $0\neq x\in\mathbb{C}^n$ such that $U^\dagger Vx=\rho(U^\dagger V)x$. So that $x\in\mathcal{R}(U^*)$ and then $x=U^\dagger Ux$. Therefore we get that $T^\dagger Ux=(I-U^\dagger V)^{-1}U^\dagger Ux= (I-U^\dagger V)^{-1}x=\displaystyle\frac{1}{1-\rho(U^\dagger V)}x$. Then $0\leq \displaystyle\frac{1}{1-\rho(U^\dagger V)}\leq \rho(T^\dagger U) $ and so $\rho(U^\dagger V)\leq \displaystyle\frac{\rho(T^\dagger U)-1}{\rho(T^\dagger U)}$. On the other hand, since $T^\dagger U\geq 0$ then $\rho(T^\dagger U)\in\sigma(T^\dagger U)$. Hence, there exists $0\neq y\in\mathbb{C}^n$ such that $T^\dagger Uy=\rho(T^\dagger U)y$. In consequence, $y\in\mathcal{R}(T^*)$ and so $y=U^\dagger Uy$.  Therefore, $\rho(T^\dagger U)y=T^\dagger Uy=(I-U^\dagger V)^{-1}U^\dagger Uy=(I-U^\dagger V)^{-1}y$. After some computations we get $U^\dagger V y=\displaystyle\frac{\rho(T^\dagger U)-1}{\rho(T^\dagger U)} y$.  Then $\displaystyle\frac{\rho(T^\dagger U)-1}{\rho(T^\dagger U)}\leq \rho(U^\dagger V)$. Therefore,  $\rho(U^\dagger V)=\displaystyle\frac{\rho(T^\dagger U)-1}{\rho(T^\dagger U)}<1$. Conversely, if item b) holds then $\|U^\dagger V\|=\rho(U^\dagger V)<1$. Then $(I-U^\dagger V)$ is invertible. So that $(I-U^\dagger V)^{-1}=\sum\limits_{k=0}^{\infty}(U^\dagger V)^k$. Then, by Proposition \ref{propiedades splitting}, we get  that $T^\dagger U=(I-U^\dagger V)^{-1}U^\dagger U=\sum\limits_{k=0}^{\infty}(U^\dagger V)^k U^\dagger U= P_{U^*}+\sum\limits_{k=1}^{\infty}(U^\dagger V)^k\geq0 $, or equivalently, $U^\dagger T=(T^\dagger U)^\dagger \geq 0.$ Moreover,  $0\leq U^\dagger V=U^\dagger (U-T)=P_{T^*}-U^\dagger T.$ The proof is complete.
\end{proof}

\begin{rem} We recommend \cite{MR1113154}, \cite{MR3141710} and \cite{MR1695402} for similar results to Proposition \ref{H-rho-CS} but for the partial order $\succ.$ On the other hand, a similar result to the previous one but considering $T\in M^{n}(\mathbb{C})$ non singular  and under the general hypothesis that $U^{-1}V> 0$,  can be found in \cite[Theorem 2.5]{MR1695402} .

Let us observe that in the context of nonnegative matrices if $T=U-V$ is a proper splitting of $T$ then  $U^\dagger T\succ 0$ does not imply $U^{\dagger}V\succ 0$, in general. For example, consider the proper splitting of  
$T=\left(\begin{array}{cc}
1 & 0 \\
1 & 2
\end{array}\right)$ given by
 $T=\left(\begin{array}{cc}
1 & 0 \\
0 & 2
\end{array}\right) - \left(\begin{array}{cc}
0 & 0 \\
-1 & 0
\end{array}\right)$. Then,  $U^\dagger T=\left(\begin{array}{cc}
1 & 0 \\
1/2 & 1
\end{array}\right)\succ 0$ but $U^{\dagger}V=\left(\begin{array}{cc}
0 & 0 \\
-1/2 & 0
\end{array}\right)\nsucc 0$.

\end{rem}

\begin{pro}\label{convergencia con solucion positiva}
Let $T=U-V$ be a proper splitting of $T$. If there exists $\tilde{X}\in M^n_H(\mathbb{C})$ such that $U\tilde{X}=V$ and $\rho(\tilde{X})<1$ then $\rho(U^\dagger V)\leq\rho(\tilde{X})<1$.
\end{pro}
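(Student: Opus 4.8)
The plan is to reduce the statement to two elementary facts: that an orthogonal projection is a contraction in the spectral norm, and that for a Hermitian matrix the spectral norm coincides with the spectral radius. First I would observe that $\tilde X$ is a (not necessarily reduced) solution of the equation $UX=V$. Multiplying $U\tilde X=V$ on the left by $U^\dagger$ gives $U^\dagger U\,\tilde X=U^\dagger V$. Since $U^\dagger U$ is Hermitian and idempotent with range $\mathcal{R}(U^*)$, it is precisely the orthogonal projection $P_{U^*}$, so that
\[
U^\dagger V = P_{U^*}\,\tilde X .
\]

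Next I would estimate the spectral radius of this product by the spectral norm. For every matrix one has $\rho(\cdot)\le\|\cdot\|$, and the spectral norm is submultiplicative, hence
\[
\rho(U^\dagger V)=\rho(P_{U^*}\tilde X)\le \|P_{U^*}\tilde X\|\le \|P_{U^*}\|\,\|\tilde X\| .
\]
Now $\|P_{U^*}\|\le 1$ because $P_{U^*}$ is an orthogonal projection, and $\|\tilde X\|=\rho(\tilde X)$ because $\tilde X$ is Hermitian, hence normal: indeed $\|\tilde X\|^2=\rho(\tilde X^*\tilde X)=\rho(\tilde X^2)=\rho(\tilde X)^2$. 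Combining these bounds yields $\rho(U^\dagger V)\le\rho(\tilde X)<1$, which is the assertion.

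The only genuinely load-bearing step is the identity $\|\tilde X\|=\rho(\tilde X)$, and this is exactly where the Hermitian hypothesis on $\tilde X$ is used: for a general solution of $UX=V$ the spectral norm may strictly exceed the spectral radius, and the contraction estimate would fail. Everything else is routine, namely the reduction $U^\dagger V=P_{U^*}\tilde X$ and the standard inequalities $\rho(\cdot)\le\|\cdot\|$ and $\|P_{U^*}\|\le 1$. It is worth noting that $U^\dagger V$ is the reduced solution of $UX=V$ for $\mathcal{R}(U^*)$, so by Lemma~\ref{rhoreducidas} its spectral radius is shared by \emph{every} reduced solution; one obtains equality rather than inequality precisely when $\tilde X$ is itself a reduced solution, which explains why the general Hermitian solution gives only the bound $\rho(U^\dagger V)\le\rho(\tilde X)$.
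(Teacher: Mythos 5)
Your proof is correct, and it is leaner than the paper's. The paper's argument first invokes Theorem \ref{soluciones} together with Proposition \ref{UdaggerV-Hermitiana} to conclude that the existence of a Hermitian solution forces $U^\dagger V$ itself to be Hermitian, whence $\rho(U^\dagger V)=\|U^\dagger V\|$; it then writes $\tilde X=U^\dagger V+Z$ with $\mathcal{R}(Z)\subseteq\mathcal{N}(U)$ and deduces $\|U^\dagger V\|\leq\|\tilde X\|=\rho(\tilde X)$. You dispense with the Hermitianness of $U^\dagger V$ entirely: the crude bound $\rho(U^\dagger V)\leq\|U^\dagger V\|$ suffices, and your identity $U^\dagger V=P_{U^*}\tilde X$ (the same content as the paper's decomposition, obtained by applying $P_{U^*}$ to it) yields the middle inequality by submultiplicativity and $\|P_{U^*}\|\leq 1$ --- in fact this makes explicit a step the paper leaves implicit, since its inequality $\|U^\dagger V\|\leq\|\tilde X\|$ is exactly the contraction estimate you spell out. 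What the paper's route buys is the sharper structural by-product $U^\dagger V\in M^n_H(\mathbb{C})$ and hence $\rho(U^\dagger V)=\|U^\dagger V\|$, which ties the proposition to its surrounding theory of Hermitian reduced solutions; what your route buys is self-containedness, using only $\rho(\cdot)\leq\|\cdot\|$, submultiplicativity, and the coincidence of norm and spectral radius for Hermitian matrices, with the hypothesis on $\tilde X$ entering at exactly one point, as you correctly note. One caveat on your closing remark: equality $\rho(U^\dagger V)=\rho(\tilde X)$ does hold whenever $\tilde X$ is a reduced solution (Lemma \ref{rhoreducidas}), but not \emph{only} then, so ``precisely when'' is too strong. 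For instance, with $U=\mathrm{diag}(1,0)$ and $V=\mathrm{diag}(1/2,0)$ (a proper splitting of $T=\mathrm{diag}(1/2,0)$), the Hermitian solution $\tilde X=\mathrm{diag}(1/2,1/2)$ is not a reduced solution, since $\mathcal{N}(\tilde X)=\{0\}\neq\mathcal{N}(V)$, yet $\rho(\tilde X)=\rho(U^\dagger V)=1/2$.
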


\begin{proof}

Let $\tilde{X}\in M^n_H(\mathbb{C})$ such that $U\tilde{X}=V$ and $\rho(\tilde{X})<1.$ Then, by Theorem \ref{soluciones} and Proposition \ref{UdaggerV-Hermitiana}, it holds that $U^\dagger V\in M^n_H(\mathbb{C})$ and so $\rho(U^\dagger V)=\|U^\dagger V\|$. Now, since  $\tilde{X}=U^\dagger V+Z$ for some $Z\in M^n(\mathbb{C})$ with $\mathcal{R}(Z)\subseteq \mathcal{N}(U)$, we get that $\rho(U^\dagger V)=\|U^\dagger V\|\leq \|\tilde{X}\|=\rho(\tilde{X})<1$. 

\end{proof}

Once the converge of the iterative process is guaranteed, then the second problem to be addressed is to improve the speed of convergence of the method. To this end, we provide two comparison results.

\begin{thm}\label{comp1}
Let $T=U_1-V_1=U_2-V_2$ be two proper splittings of $T$ such that $0\leq T^\dagger V_1\leq T^\dagger V_2$ and $\mathcal{N}(V_1)=\mathcal{N}(V_2)$.  Then $\rho(U_1^\dagger V_1)\leq \rho(U_2^\dagger V_2)<1$.
\end{thm}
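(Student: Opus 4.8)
The plan is to reduce the comparison of $\rho(U_1^\dagger V_1)$ and $\rho(U_2^\dagger V_2)$ to the single scalar comparison of $\rho(T^\dagger V_1)$ and $\rho(T^\dagger V_2)$, using the explicit formula recorded in Proposition~\ref{equiTdaggerV}. First I would note that the hypotheses of that proposition are available for each splitting: the matrix $T^\dagger V_1$ is positive semidefinite by assumption, while $T^\dagger V_2=(T^\dagger V_2-T^\dagger V_1)+T^\dagger V_1$ is a sum of two positive semidefinite matrices and hence $T^\dagger V_2\in M^n_+(\mathbb{C})$. Applying the equivalence $a)\leftrightarrow d)$ of Proposition~\ref{equiTdaggerV} to each of the proper splittings $T=U_1-V_1$ and $T=U_2-V_2$ then gives $U_i^\dagger V_i\in M^n_+(\mathbb{C})$ together with $\rho(U_i^\dagger V_i)=\displaystyle\frac{\rho(T^\dagger V_i)}{1+\rho(T^\dagger V_i)}<1$ for $i=1,2$.

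With these two identities in hand the theorem follows by propagating a scalar inequality through the function $f(t)=\displaystyle\frac{t}{1+t}$, which is strictly increasing on $[0,\infty)$. Concretely, once $\rho(T^\dagger V_1)\leq\rho(T^\dagger V_2)$ is established, monotonicity of $f$ yields $\rho(U_1^\dagger V_1)=f(\rho(T^\dagger V_1))\leq f(\rho(T^\dagger V_2))=\rho(U_2^\dagger V_2)$, and the strict bound $<1$ has already been obtained. So the whole argument rests on the monotonicity of the spectral radius along the L\"owner order for the pair $T^\dagger V_1\leq T^\dagger V_2$.

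This is the step I would treat with care, and I would prove it directly from the definition of positive semidefiniteness rather than through any spectral perturbation theorem. Since $T^\dagger V_i\in M^n_+(\mathbb{C})$, its spectral radius coincides with its largest eigenvalue and admits the Rayleigh description $\rho(T^\dagger V_i)=\max_{\|x\|=1}\pint{T^\dagger V_i\,x,x}$. The hypothesis $T^\dagger V_1\leq T^\dagger V_2$ says precisely that $\pint{T^\dagger V_1\,x,x}\leq\pint{T^\dagger V_2\,x,x}$ for every $x$, so maximizing over the unit sphere gives $\rho(T^\dagger V_1)\leq\rho(T^\dagger V_2)$. It is exactly here that the positive semidefiniteness encoded in $0\leq T^\dagger V_1\leq T^\dagger V_2$ is indispensable, since the spectral radius is not monotone under the L\"owner order for general Hermitian matrices.

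Finally, the hypothesis $\mathcal{N}(V_1)=\mathcal{N}(V_2)$ is what aligns the two splittings: it forces $r(V_1)=r(V_2)$, and since $\mathcal{R}(V_i)\subseteq\mathcal{R}(T)$ (Proposition~\ref{propiedades splitting}) with $T^\dagger$ injective on $\mathcal{R}(T)$, also $r(T^\dagger V_1)=r(T^\dagger V_2)$, as well as $P_{V_1^*}=P_{V_2^*}$. The rank equality is exactly condition $c_1)$ of Theorem~\ref{lowner-mp}, which suggests an alternative route likely favored given the emphasis on that theorem: convert $T^\dagger V_1\leq T^\dagger V_2$ into a comparison of Moore--Penrose inverses and exploit the common projection $P_{V_1^*}$ through Proposition~\ref{equiTdaggerV}$c)$. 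I expect this range/nullspace bookkeeping, rather than the scalar monotonicity, to be the main source of technical friction; the direct Rayleigh-quotient argument above sidesteps most of it and reaches the same conclusion.
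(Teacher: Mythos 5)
Your proof is correct, and it takes a genuinely different route from the paper's. The paper works at the operator level: it uses Theorem \ref{lowner-mp} (L\"owner antitonicity of the Moore--Penrose inverse) to pass from $0\leq T^\dagger V_1\leq T^\dagger V_2$ to $0\leq (T^\dagger V_2)^\dagger\leq (T^\dagger V_1)^\dagger$ --- and this is exactly where $\mathcal{N}(V_1)=\mathcal{N}(V_2)$ is spent, in verifying the range/nullspace conditions of type $c_2)$ --- and then unwinds, via $T^\dagger=(I-U_i^\dagger V_i)^{-1}U_i^\dagger$ and the common projection $P_{V_1^*}$, to the L\"owner inequality $0\leq U_1^\dagger V_1\leq U_2^\dagger V_2$ between the iteration matrices themselves, from which the radius comparison follows (by the same Rayleigh-quotient monotonicity you spell out, with Proposition \ref{equiTdaggerV} supplying the bound $<1$). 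You instead scalarize immediately: $T^\dagger V_2=T^\dagger V_1+(T^\dagger V_2-T^\dagger V_1)\in M^n_+(\mathbb{C})$, so Proposition \ref{equiTdaggerV} $a)\leftrightarrow d)$ gives $\rho(U_i^\dagger V_i)=f\bigl(\rho(T^\dagger V_i)\bigr)<1$ with $f(t)=t/(1+t)$ increasing on $[0,\infty)$, and $\rho(T^\dagger V_1)\leq\rho(T^\dagger V_2)$ follows from $\rho(A)=\max_{\|x\|=1}\pint{Ax,x}$ for $A\in M^n_+(\mathbb{C})$. Each route buys something: the paper's yields the stronger operator inequality $0\leq U_1^\dagger V_1\leq U_2^\dagger V_2$, which carries more information than the comparison of radii, while yours is shorter, bypasses the Moore--Penrose antitonicity bookkeeping entirely, and --- notably --- never invokes $\mathcal{N}(V_1)=\mathcal{N}(V_2)$ at any point. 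Since every step of your argument needs only that each splitting is proper and that $0\leq T^\dagger V_1\leq T^\dagger V_2$, you have in fact proved Theorem \ref{comp1} without the nullspace hypothesis, which answers affirmatively (for this theorem) the question raised in the remark following Theorem \ref{comp2} about whether that condition can be relaxed. One small correction to your closing speculation: the range/nullspace bookkeeping you anticipated as the main source of friction belongs only to the paper's route through Theorem \ref{lowner-mp}; in your proof it is simply absent, not sidestepped at a cost.
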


\begin{proof}
Since $0\leq T^\dagger V_1\leq T^\dagger V_2$ and $ \mathcal{N}(V_1)=\mathcal{N}(V_2)$ we get that $\mathcal{N}(T^\dagger V_1)= \mathcal{N}(V_1)=\mathcal{N}(V_2)=\mathcal{N}(T^\dagger V_2)$ and $\mathcal{R}(T^\dagger V_1)=\mathcal{R}(T^\dagger V_2)$. Now, observe that $\mathcal{R}(T^\dagger V_2 - T^\dagger V_1)\cap\mathcal{N}(T^\dagger V_2)\subseteq \mathcal{R}(T^\dagger V_2)\cap\mathcal{N}(T^\dagger V_2)=\{0\}$. On the other hand, $\mathcal{R}((T^\dagger V_1)^\dagger - (T^\dagger V_2)^\dagger)\cap\mathcal{N}(T^\dagger V_1)\subseteq \mathcal{N}( V_1)^\bot\cap \mathcal{N}(T^\dagger V_1)= \mathcal{N}(V_1)^{\bot}\cap \mathcal{N}(V_1)=\{0\}$. So that, by Theorem \ref{lowner-mp} it holds that $0\leq (T^\dagger V_2)^\dagger \leq (T^\dagger V_1)^\dagger$. Now, since $T^\dagger=(I-U_1^\dagger V_1)^{-1}U_1^\dagger=(I-U_2^\dagger V_2)^{-1}U_2^\dagger$ it holds that 

\begin{eqnarray}
0\leq \left( T^\dagger V_2 \right)^\dagger \leq \left(T^\dagger V_1\right)^\dagger &\leftrightarrow& \left(  (I-U_2^\dagger V_2)^{-1}U_2^\dagger V_2 \right)^\dagger \leq \left(   (I-U_1^\dagger V_1)^{-1}U_1^\dagger V_1 \right)^\dagger \nonumber\\
&\leftrightarrow& 0\leq \left(U_2^\dagger V_2\right)^\dagger \left( I - U_2^\dagger V_2\right)\leq (U_1^\dagger V_1)^\dagger (I-U_1^\dagger V_1) \nonumber\\
&\leftrightarrow& 0\leq   \left(U_2^\dagger V_2\right)^\dagger - P_{\mathcal{N}(U_2^\dagger V_2)^\bot} \leq  \left(U_1^\dagger V_1\right)^\dagger - P_{\mathcal{N}\left(U_1^\dagger V_1\right)^\bot}\nonumber\\
&\leftrightarrow& 0\leq \left(U_2^\dagger V_2\right)^\dagger - P_{V_1^*} \leq  \left(U_1^\dagger V_1\right)^\dagger  - P_{V_1^*}\nonumber\\
&\leftrightarrow& 0\leq \left(U_2^\dagger V_2\right)^\dagger \leq \left(U_1^\dagger V_1\right)^\dagger\nonumber\\
& \leftrightarrow& 0\leq  U_1^\dagger V_1\leq U_2^\dagger V_2. \nonumber
\end{eqnarray}
Then, by Proposition \ref{equiTdaggerV}, we get that $\rho(U_1^\dagger V_1)\leq \rho(U_2^\dagger V_2)<1$.
 \end{proof}



\begin{thm} \label{comp2}
Let $T=U_1-V_1=U_2-V_2$ be two proper splittings of $T$. If $0\leq T^\dagger U_1\leq T^\dagger U_2 \leq P_{T^*}$ then $\rho(U_1^\dagger V_1)\leq \rho(U_2^\dagger V_2)<1$.
\end{thm}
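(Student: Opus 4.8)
The plan is to follow the template of the proof of Theorem \ref{comp1}: I would transport the assumed L\"owner ordering of the matrices $T^\dagger U_i$ into an ordering of the matrices $U_i^\dagger V_i$, and then compare their spectral radii. The two identities that drive the argument are $(T^\dagger U_i)^\dagger = U_i^\dagger T$, from Proposition \ref{propiedades splitting}, and $U_i^\dagger U_i = P_{T^*}$, which holds because $\mathcal{N}(U_i)=\mathcal{N}(T)$. Combined with $T=U_i-V_i$ these yield $U_i^\dagger V_i = U_i^\dagger U_i - U_i^\dagger T = P_{T^*}-U_i^\dagger T$, so that controlling $U_i^\dagger T$ between $0$ and $P_{T^*}$ immediately controls $U_i^\dagger V_i$.

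First I would pass to Moore--Penrose inverses. Each $T^\dagger U_i$ is Hermitian positive semidefinite of rank $r(T^\dagger U_i)=r(U_i)=r(T)=r(P_{T^*})$, the first equality holding because $T^\dagger$ is injective on $\mathcal{R}(U_i)=\mathcal{R}(T)$. Hence Theorem \ref{lowner-mp}, invoked with the rank condition $c_1)$, inverts each L\"owner inequality upon taking $\dagger$: using $(T^\dagger U_i)^\dagger=U_i^\dagger T$ and $P_{T^*}^\dagger=P_{T^*}$, the assumed inequalities between $P_{T^*}$ and the $T^\dagger U_i$ become $0\leq U_2^\dagger T\leq U_1^\dagger T\leq P_{T^*}$. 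This is the step where the position of $P_{T^*}$ in the chain is decisive: it is exactly what forces $U_i^\dagger T\leq P_{T^*}$, and hence $U_i^\dagger V_i = P_{T^*}-U_i^\dagger T\geq 0$ for each $i$.

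It then remains to translate back and compare. Subtracting the chain $0\leq U_2^\dagger T\leq U_1^\dagger T\leq P_{T^*}$ from $P_{T^*}$ gives $0\leq U_1^\dagger V_1\leq U_2^\dagger V_2$. Both sides being positive semidefinite, their spectral radii equal their largest eigenvalues, and the monotonicity of $\lambda_{\max}$ under the L\"owner order yields $\rho(U_1^\dagger V_1)\leq \rho(U_2^\dagger V_2)$. Finally, since $0\leq U_2^\dagger T\leq P_{T^*}$, Proposition \ref{H-rho-CS} applies to the splitting $T=U_2-V_2$ and supplies the strict estimate $\rho(U_2^\dagger V_2)<1$; chaining the two inequalities gives the assertion.

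The main obstacle, exactly as in Theorem \ref{comp1}, is the disciplined use of Theorem \ref{lowner-mp}. One must check the rank equalities $r(T^\dagger U_i)=r(T)$ (and, where the theorem demands it, the range and nullspace side conditions $c_2)$) so that the order really does reverse cleanly under $\dagger$, and one must track the orientation of every inequality --- in particular which side $P_{T^*}$ lies on --- so that the matrices $U_i^\dagger V_i$ emerge positive semidefinite and ordered in the direction required by the conclusion. Once the passage from $T^\dagger U_i$ to $U_i^\dagger T$ and then to $U_i^\dagger V_i$ is carried out with the correct orientation, the spectral comparison and the strict bound from Proposition \ref{H-rho-CS} are immediate.
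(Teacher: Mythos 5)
Your argument fails at the antitonicity step, and the failure is one of orientation. Theorem \ref{lowner-mp} reverses \emph{every} inequality it is applied to, including the one against $P_{T^*}$: from $T^\dagger U_2\leq P_{T^*}$ together with the rank equality $r(T^\dagger U_2)=r(P_{T^*})$ you obtain $P_{T^*}=P_{T^*}^\dagger\leq (T^\dagger U_2)^\dagger=U_2^\dagger T$, not $U_2^\dagger T\leq P_{T^*}$. So the stated hypothesis $0\leq T^\dagger U_1\leq T^\dagger U_2\leq P_{T^*}$ inverts to $P_{T^*}\leq U_2^\dagger T\leq U_1^\dagger T$, which is the opposite of the chain $0\leq U_2^\dagger T\leq U_1^\dagger T\leq P_{T^*}$ that you assert; it yields $U_i^\dagger V_i=P_{T^*}-U_i^\dagger T\leq 0$, so the matrices you need to be positive semidefinite come out negative semidefinite, and Proposition \ref{H-rho-CS} (whose condition is precisely $0\leq U^\dagger T\leq P_{T^*}$) does not apply. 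Moreover, no repair of this step is possible under the hypothesis read literally, because the statement itself then fails: take the scalars $T=1$, $U_1=1/4$, $U_2=1/3$, so $V_1=-3/4$, $V_2=-2/3$. Then $0\leq T^\dagger U_1\leq T^\dagger U_2\leq P_{T^*}=1$ holds, yet $\rho(U_1^\dagger V_1)=3$ and $\rho(U_2^\dagger V_2)=2$, so both the comparison $\rho(U_1^\dagger V_1)\leq\rho(U_2^\dagger V_2)$ and the bound $\rho(U_2^\dagger V_2)<1$ are violated.

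In fairness, the paper's own proof carries exactly the same slip: its opening appeal to Proposition \ref{H-rho-CS} tacitly swaps $T^\dagger U_i$ for $U_i^\dagger T$, and its third displayed equivalence keeps $P_{T^*}$ as an upper bound after applying $\dagger$, which Theorem \ref{lowner-mp} does not license. The hypothesis that makes both the theorem and the proof work is the reversed chain $P_{T^*}\leq T^\dagger U_1\leq T^\dagger U_2$, equivalently (by the inversion you attempted, now correctly oriented) $0\leq U_2^\dagger T\leq U_1^\dagger T\leq P_{T^*}$. Under that corrected hypothesis your argument is sound and is essentially the paper's: the only cosmetic difference is that you invoke $(T^\dagger U_i)^\dagger=U_i^\dagger T$ from Proposition \ref{propiedades splitting} directly, while the paper writes $T^\dagger U_i=(I-U_i^\dagger V_i)^{-1}P_{T^*}$ and Moore--Penrose-inverts that product; the subtraction from $P_{T^*}$, the rank verifications, the monotonicity of the largest eigenvalue on positive semidefinite matrices, and the strict bound from Proposition \ref{H-rho-CS} applied to the second splitting then proceed exactly as in the paper. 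So the one genuine gap to fix --- in your write-up as in the printed theorem --- is the direction in which $P_{T^*}$ travels through the L\"owner-antitone inversion.
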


\begin{proof}
By Proposition \ref{H-rho-CS}, it holds that $\rho(U_1^\dagger V_1)<1$ and $\rho(U_2^\dagger V_2)<1$.  Now, observe that
\begin{eqnarray}
0\leq T^\dagger U_1 \leq T^\dagger U_2 \leq P_{T^*} &\leftrightarrow& 0\leq  (I-U_1^\dagger V_1)^{-1} U_1^\dagger U_1 \leq (I-U_2^\dagger V_2)^{-1} U_2^\dagger U_2 \leq P_{T^*} \nonumber\\
&\leftrightarrow& 0\leq  (I-U_1^\dagger V_1)^{-1} P_{T^*} \leq (I-U_2^\dagger V_2)^{-1} P_{T^*} \leq P_{T^*}\nonumber\\
&\leftrightarrow& 0\leq \left(  (I-U_2^\dagger V_2)^{-1} P_{T^*}  \right)^\dagger \leq \left(  (I-U_1^\dagger V_1)^{-1} P_{T^*}  \right)^\dagger \leq P_{T^*}\nonumber\\
&\leftrightarrow& 0\leq P_{T^*} (I-U_2^\dagger V_2) \leq P_{T^*} (I-U_1^\dagger V_1)\leq P_{T^*}\nonumber\\
&\leftrightarrow& 0\leq P_{T^*} -U_2^\dagger V_2\leq P_{T^*} -U_1^\dagger V_1\leq P_{T^*}\nonumber\\ 
&\leftrightarrow&  0\leq U_1^\dagger V_1\leq U_2^\dagger V_2\leq P_{T^*} ,\nonumber
\end{eqnarray}
where the third equivalence holds because  $r((I-U_1^\dagger V_1)^{-1} P_{T^*} ) =r(P_{T^*})$ and $r((I-U_2^\dagger V_2)^{-1} P_{T^*} ) =r(P_{T^*})$ (see Theorem \ref{lowner-mp}). Therefore,  $\rho(U_1^\dagger V_1)\leq \rho(U_2^\dagger V_2)<1$.
\end{proof}

\begin{rem}
Comparison results similar to those above but for the partial order $\succ,$ can be found in \cite{MR1113154}, \cite{MR3141710}. When contrasting our results with those for the partial order $\succ$, it can be observed that we require extra conditions on the matrices of the splittings in order to get the conclusions. More precisely, we require that $\mathcal{N}(V_1)=\mathcal{N}(V_2)$ in Theorem \ref{comp1} and that $T^\dagger U_1\leq T^\dagger U_2 \leq P_{T^*}$ in Theorem \ref{comp2}.  We leave as an open problem to determine if these conditions can be relaxed. 

\end{rem}

\section{Some examples of proper splittings}\label{Sec-Ejemplos}

Recall that every $T\in M^{m\times n}(\mathbb{C})$ can be factorized as $T=U|T|$, where $U\in M^{m\times n}(\mathbb{C})$ is a partial isometry and $|T|=(T^*T)^{1/2}$. Such a factorization is called a polar decomposition of $T$ and the matrix $U$ is unique if $\mathcal{R}(U)=\mathcal{R}(T)$. In what follows, we call {\bf{the polar decomposition}} of $T$ to the unique factorization $T=U_T|T|$  where $U_T\in M^{m\times n}(\mathbb{C})$ is a partial isometry with $\mathcal{R}(U_T)=\mathcal{R}(T)$. Notice that it also holds that $\mathcal{N}(U_T)=\mathcal{N}(T)$. Therefore, the partial isometry $U_T$ can be used to define a proper splitting of $T$, $T=U_T-V$. This proper splitting has the advantage that $U_T^\dagger=U_T^*$. The aim in this section is to study this particular proper splitting. We study equivalent conditions to guarantee the convergence of this particular proper splitting and we compare it with some proper splittings defined for certain classes of matrices.

\begin{pro}\label{splittingpolar}
Let  $T\in M^{m\times n}(\mathbb{C})$. Then $T=U_T - V$ is a proper splitting of $T$ such that $U_T^*V \in M^n_H(\mathbb{C})$.
\end{pro}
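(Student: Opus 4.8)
The plan is to verify the two defining conditions of a proper splitting and then to compute $U_T^*V$ explicitly, exploiting the special structure of the polar decomposition. First I would record that $V=U_T-T=U_T-U_T|T|=U_T(I-|T|)$. That $T=U_T-V$ is a proper splitting is then immediate: by the very definition of \textbf{the polar decomposition} one has $\mathcal{R}(U_T)=\mathcal{R}(T)$, and as already observed $\mathcal{N}(U_T)=\mathcal{N}(T)$, which are precisely the two range/nullspace conditions in the definition of a proper splitting.

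For the Hermitian assertion, the key input is that $U_T$ is a partial isometry with $\mathcal{N}(U_T)=\mathcal{N}(T)$, so that $U_T^*U_T=P_{\mathcal{N}(T)^\bot}=P_{T^*}$, the orthogonal projection onto $\mathcal{R}(T^*)$. Substituting into the expression for $V$ gives $U_T^*V=U_T^*U_T(I-|T|)=P_{T^*}(I-|T|)=P_{T^*}-P_{T^*}|T|$. Since $|T|=(T^*T)^{1/2}$ satisfies $\mathcal{R}(|T|)=\mathcal{R}(T^*T)=\mathcal{R}(T^*)$, the projection $P_{T^*}$ acts as the identity on $\mathcal{R}(|T|)$, whence $P_{T^*}|T|=|T|$ and therefore $U_T^*V=P_{T^*}-|T|$. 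Both $P_{T^*}$ (an orthogonal projection) and $|T|$ (a positive semidefinite matrix) are Hermitian, so their difference $U_T^*V$ lies in $M^n_H(\mathbb{C})$, as required.

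An alternative route, worth noting as a cross-check, is to invoke Proposition \ref{UdaggerV-Hermitiana}: because $U_T$ is a partial isometry one has $U_T^\dagger=U_T^*$, so $U_T^*V=U_T^\dagger V$, and the proposition reduces the Hermitian property of $U_T^\dagger V$ to that of $U_TT^*$. The latter is transparent since $U_TT^*=U_T|T|U_T^*$ (using $T^*=|T|U_T^*$ together with $|T|=|T|^*$), which is visibly self-adjoint. I do not expect any real obstacle in either approach; the only point deserving an explicit word is the range identity $\mathcal{R}(|T|)=\mathcal{R}(T^*)$ underlying $P_{T^*}|T|=|T|$, a standard fact about the square root of $T^*T$.
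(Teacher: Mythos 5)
Your proof is correct and follows essentially the same route as the paper: the proper splitting is immediate from $\mathcal{R}(U_T)=\mathcal{R}(T)$ and $\mathcal{N}(U_T)=\mathcal{N}(T)$, and the Hermitian claim comes from the computation $U_T^*V=U_T^*(U_T-T)=P_{T^*}-|T|$, exactly as in the paper, with your explicit justification of $P_{T^*}|T|=|T|$ simply filling in a step the paper leaves tacit. The alternative check via Proposition \ref{UdaggerV-Hermitiana} and $U_TT^*=U_T|T|U_T^*$ is a valid bonus but not needed.
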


\begin{proof}
Since $\mathcal{R}(U_T)=\mathcal{R}(T)$ and $\mathcal{N}(U_T)=\mathcal{N}(T)$ then $T=U_T-V$ is a proper splitting. Now, as $U_T^*V=U_T^*(U_T-T)=P_{T^*}-|T|$ then $U_T^*V$ is Hermitian. 
\end{proof}

%
%
%
%
%

The next result due to Baksalary, Liski and Trenkler \cite{MR1032688} will be used to prove Proposition \ref{polar splitting}. We present an alternative proof.
\begin{lem}\label{BLT}
Let $A,B\in M_+^n(\mathbb{C})$. Then, $A\leq B$ if and only if $\rho({B^\dagger A})\leq 1$ and $\mathcal{R}(A)\subseteq \mathcal{R}(B)$.
\end{lem}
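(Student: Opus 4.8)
The plan is to leverage Theorem \ref{lowner-mp}, which is the natural tool since it characterizes the Löwner order in terms of Moore--Penrose inverses and range/rank conditions. I would prove the two implications separately.

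For the forward direction, assume $A\leq B$ with $A,B\in M_+^n(\mathbb{C})$. The range inclusion $\mathcal{R}(A)\subseteq\mathcal{R}(B)$ follows immediately: if $x\in\mathcal{N}(B)$, then $\pint{Bx,x}=0$, and since $0\leq\pint{Ax,x}\leq\pint{Bx,x}=0$ forces $Ax=0$ (using $A\geq0$ and the Cauchy--Schwarz-type inequality $\|A^{1/2}x\|^2=\pint{Ax,x}=0$), we get $\mathcal{N}(B)\subseteq\mathcal{N}(A)$, equivalently $\mathcal{R}(A)\subseteq\mathcal{R}(B)$. To bound the spectral radius, I would work on the subspace $\mathcal{R}(B)$, where $B$ is invertible, and consider $B^\dagger A$. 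Since $B^\dagger A$ has the same nonzero eigenvalues as the Hermitian matrix $B^{\dagger/2}AB^{\dagger/2}$ (similarity via $B^{\dagger/2}$ restricted to $\mathcal{R}(B)$, where $B^{\dagger/2}=(B^\dagger)^{1/2}$), it suffices to show $B^{\dagger/2}AB^{\dagger/2}\leq P_{\mathcal{R}(B)}$. Indeed, $A\leq B$ gives $B^{\dagger/2}AB^{\dagger/2}\leq B^{\dagger/2}BB^{\dagger/2}=P_{\mathcal{R}(B)}$, so every eigenvalue of this Hermitian matrix lies in $[0,1]$, whence $\rho(B^\dagger A)\leq 1$.

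For the converse, assume $\rho(B^\dagger A)\leq 1$ and $\mathcal{R}(A)\subseteq\mathcal{R}(B)$. The range inclusion lets me again pass to the Hermitian matrix $H:=B^{\dagger/2}AB^{\dagger/2}$, which is positive semidefinite with $\mathcal{R}(H)\subseteq\mathcal{R}(B)$ and shares its nonzero spectrum with $B^\dagger A$; thus $\rho(H)=\rho(B^\dagger A)\leq1$, so $H\leq P_{\mathcal{R}(B)}$. Multiplying on both sides by $B^{1/2}$ and using $\mathcal{R}(A)\subseteq\mathcal{R}(B)$ to recover $A$ from its compression, namely $B^{1/2}HB^{1/2}=B^{1/2}B^{\dagger/2}AB^{\dagger/2}B^{1/2}=P_{\mathcal{R}(B)}AP_{\mathcal{R}(B)}=A$, I obtain $A=B^{1/2}HB^{1/2}\leq B^{1/2}P_{\mathcal{R}(B)}B^{1/2}=B$, as desired.

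The main obstacle is bookkeeping with the ranges when $B$ is singular: the identity $B^{1/2}B^{\dagger/2}=P_{\mathcal{R}(B)}$ and the reduction $P_{\mathcal{R}(B)}AP_{\mathcal{R}(B)}=A$ both rely crucially on $\mathcal{R}(A)\subseteq\mathcal{R}(B)$, and the similarity between $B^\dagger A$ and the Hermitian $H$ must be justified on $\mathcal{R}(B)$ where $B^{\dagger/2}$ acts invertibly. Once these range matchings are handled carefully, the positive-semidefinite congruences do all the real work. An alternative entirely avoiding square roots would invoke Theorem \ref{lowner-mp} directly, identifying $\rho(B^\dagger A)\leq1$ with a rank/range condition, but the congruence argument above is cleaner and self-contained.
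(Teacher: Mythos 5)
Your proof is correct, and it takes a genuinely different route from the paper's. The paper proves both implications through Douglas' majorization theorem \cite{MR0203464}: from $A\leq \lambda B$ one extracts the formula $\inf\{\lambda: A\leq \lambda B\}=\|(B^{1/2})^\dagger A^{1/2}\|^2$, and the spectral radius enters via the Hermitian matrix $A^{1/2}B^\dagger A^{1/2}$, which shares its nonzero spectrum with $B^\dagger A$; even the range inclusion in the forward direction is read off from Douglas' theorem. You conjugate on the other side, with $H=(B^\dagger)^{1/2}A(B^\dagger)^{1/2}$, and replace Douglas' theorem by two congruences: $A\leq B$ pushes down to $H\leq (B^\dagger)^{1/2}B(B^\dagger)^{1/2}=P_{\mathcal{R}(B)}$, and conversely $H\leq P_{\mathcal{R}(B)}$ pulls back to $A=B^{1/2}HB^{1/2}\leq B$, the reconstruction identity $B^{1/2}HB^{1/2}=P_{\mathcal{R}(B)}AP_{\mathcal{R}(B)}=A$ being exactly where the hypothesis $\mathcal{R}(A)\subseteq\mathcal{R}(B)$ is consumed; you also obtain the range inclusion in the forward direction elementarily from $\pint{Ax,x}\leq\pint{Bx,x}$ and $\|A^{1/2}x\|^2=\pint{Ax,x}$. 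What each buys: your argument is self-contained, using only congruence invariance of the L\"owner order and the coincidence of the nonzero spectra of $XY$ and $YX$, while the paper's is shorter given that Douglas' theorem is already a standing tool in the article and transfers directly to the Hilbert-space setting mentioned after Theorem \ref{Douglas}. One small point of rigor: your parenthetical ``similarity via $(B^\dagger)^{1/2}$ restricted to $\mathcal{R}(B)$'' does work, but it needs the observation that every eigenvector of $B^\dagger A$ for a nonzero eigenvalue $\lambda$ lies in $\mathcal{R}(B)$ (since $v=\lambda^{-1}B^\dagger Av$); the cleaner invocation is simply $\sigma(XY)\setminus\{0\}=\sigma(YX)\setminus\{0\}$ with $X=(B^\dagger)^{1/2}$ and $Y=(B^\dagger)^{1/2}A$, which needs no range bookkeeping at all and is the same device the paper uses with $X=A^{1/2}$.
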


\begin{proof}
Let $A,B\in M_+^n(\mathbb{C})$. If $A\leq B$ then, by Douglas' theorem \cite[Theorem 1]{MR0203464}, $\mathcal{R}(A)\subseteq \mathcal{R}(B)$ and  $\|(B^{1/2})^\dagger A^{1/2}\|=\inf\{\lambda: A\leq \lambda B\}$. Thus, $\|(B^{1/2})^\dagger A^{1/2}\|\leq 1$. Therefore, $\rho(B^\dagger A)=\rho(A^{1/2}B^\dagger A^{1/2})=\| A^{1/2}B^\dagger A^{1/2}\|=\|(B^{1/2})^\dagger A^{1/2}\|^2\leq 1$.  

Conversely, suppose that  $\rho({B^\dagger A})\leq 1$ and $\mathcal{R}(A)\subseteq \mathcal{R}(B)$. Hence, since $\sigma(A^{1/2}B^\dagger A^{1/2})=\sigma(B^\dagger A)$ we get that $\|(B^{1/2})^\dagger A^{1/2}\|^2 = \|A^{1/2}B^\dagger A^{1/2}\|=\rho(A^{1/2}B^\dagger A^{1/2})=\rho(B^\dagger A)\leq 1$. Then, applying again \cite[Theorem 1]{MR0203464},  $\inf\{\lambda: A\leq \lambda B\}=\|(B^{1/2})^\dagger A^{1/2}\|\leq 1$. Therefore $A\leq B$.

\end{proof}

In the next proposition we provide sufficient conditions for the convergence of the proper splitting $T=U_T-V$.
\begin{pro} \label{polar splitting} Let  $T\in M^{m\times n}(\mathbb{C})$ and consider the proper splitting $T=U_T - V$ of $T$. The following conditions are equivalent:
\begin{enumerate}
\item  $T^\dagger V \in M^n_+(\mathbb{C})$;
\item $U_T^*V \in M^n_+(\mathbb{C})$;
\item $\|T\|\leq 1$.
\end{enumerate}
 If any of the above conditions holds then the proper splitting $T=U_T-V$ is convergent.
\end{pro}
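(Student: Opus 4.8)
The plan is to reduce all three conditions to the single L\"owner inequality $|T|\leq P_{T^*}$, by exploiting the explicit form that $U_T^\dagger V$ and $T^\dagger V$ acquire from the polar decomposition. First I would record the two identities on which everything rests. Since $U_T$ is a partial isometry with $\mathcal{R}(U_T)=\mathcal{R}(T)$ and $\mathcal{N}(U_T)=\mathcal{N}(T)$, we have $U_T^\dagger=U_T^*$ and $U_T^*U_T=P_{T^*}$, and Proposition \ref{splittingpolar} already supplies $U_T^*V=P_{T^*}-|T|$. For the range side I would compute $T^\dagger=|T|^\dagger U_T^*$, the reverse-order law for the factorization $T=U_T|T|$: by Theorem \ref{Greville} it suffices that $\mathcal{R}(U_T^*U_T|T|)\subseteq\mathcal{R}(|T|)$ and $\mathcal{R}(|T|^2U_T^*)\subseteq\mathcal{R}(U_T^*)$, both of which follow from $U_T^*U_T=P_{T^*}$ and $\mathcal{R}(|T|)=\mathcal{R}(T^*)=\mathcal{R}(U_T^*)$. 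Combining this with $|T|^\dagger|T|=P_{T^*}$ and $|T|^\dagger P_{T^*}=|T|^\dagger$ gives
\[
T^\dagger V=|T|^\dagger U_T^*V=|T|^\dagger(P_{T^*}-|T|)=|T|^\dagger-P_{T^*}.
\]
Thus both $U_T^*V$ and $T^\dagger V$ are Hermitian, and $|T|,P_{T^*}$ commute with common range $\mathcal{R}(T^*)$ and common nullspace $\mathcal{N}(T)$.

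Next I would prove the three equivalences by collapsing them onto $|T|\leq P_{T^*}$. Condition $b)$, namely $U_T^*V=P_{T^*}-|T|\geq 0$, is literally $|T|\leq P_{T^*}$. For $a)$, $T^\dagger V=|T|^\dagger-P_{T^*}\geq 0$ reads $(P_{T^*})^\dagger=P_{T^*}\leq|T|^\dagger$; since $r(|T|)=r(T)=r(P_{T^*})$, Theorem \ref{lowner-mp} converts this into $|T|\leq P_{T^*}$ and conversely, so $a)\leftrightarrow b)$. Finally, applying Lemma \ref{BLT} with $A=|T|$ and $B=P_{T^*}$ — using $\mathcal{R}(|T|)=\mathcal{R}(T^*)=\mathcal{R}(P_{T^*})$ and $\rho(P_{T^*}^\dagger|T|)=\rho(|T|)=\|T\|$ — shows $|T|\leq P_{T^*}$ if and only if $\|T\|\leq 1$, which is $c)$. (Equivalently, $b)\leftrightarrow c)$ is immediate, since $P_{T^*}$ restricts to the identity on $\mathcal{R}(T^*)=\mathcal{R}(|T|)$ and $\||T|\|=\|T\|$.)

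For the convergence statement, assume the common condition holds. Then $U_T^\dagger V=U_T^*V=P_{T^*}-|T|$ is positive semidefinite, so $\rho(U_T^\dagger V)=\|U_T^\dagger V\|$ equals its largest eigenvalue. Its eigenvalues are $0$ on $\mathcal{N}(T)$ and $1-s$ on $\mathcal{R}(T^*)$, where $s$ runs over the positive singular values of $T$; under $c)$ each $s$ satisfies $0<s\leq 1$, so every such eigenvalue lies in $[0,1)$ and hence $\rho(U_T^\dagger V)<1$. By Corollary \ref{corrho} (equivalently Theorem \ref{convergencia}), the iterative process of the splitting $T=U_T-V$ converges, as claimed.

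I expect the only genuinely delicate point to be the clean justification of $T^\dagger=|T|^\dagger U_T^*$: the reverse-order law needs the range inclusions of Theorem \ref{Greville}, and since $U_T$ is rectangular one may instead prefer to verify the four Moore-Penrose equations of \eqref{mp-equations} by hand. Everything else is routine bookkeeping with commuting positive semidefinite matrices together with the order-reversing behaviour of the Moore-Penrose inverse encoded in Theorem \ref{lowner-mp}.
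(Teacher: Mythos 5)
Your proposal is correct and follows essentially the same route as the paper: both proofs reduce everything to the single inequality $|T|\leq P_{T^*}$ via the identities $U_T^*V=P_{T^*}-|T|$ and $T^\dagger V=|T|^\dagger-P_{T^*}$, use Theorem \ref{lowner-mp} for $a)\leftrightarrow b)$, and use Lemma \ref{BLT} (applied to $|T|$ and $P_{T^*}$) for $b)\leftrightarrow c)$. The only differences are cosmetic and to your credit: you explicitly justify $T^\dagger=|T|^\dagger U_T^*$ (which the paper uses tacitly), and you prove the final convergence claim by a direct eigenvalue computation for $P_{T^*}-|T|$ instead of citing Proposition \ref{equiTdaggerV}, both of which are sound.
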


\begin{proof}
$a)\leftrightarrow b)$ Suppose that $T^\dagger V \in M^n_+(\mathbb{C})$. Since $T^\dagger V=|T|^\dagger- P_{T^*}\geq 0$ then, by Theorem \ref{lowner-mp}, it holds $|T|\leq P_{T^*}$. Therefore, $U_T^*V=U_T^*(U_T-T)=P_{T^*}-|T|\in M^n_+(\mathbb{C})$. The converse is similar. 

$b)\leftrightarrow c)$ If $U_T^*V=P_{T^*}-|T|\geq 0$ then $|T|\leq P _{T^*}$ and so $\|T\|=\| |T|\|\leq 1$. Conversely, if $\|T\|\leq 1$ we get that $\|P_{T^*}|T|\|=\||T|\|=\|T\|\leq 1$. Thus, $\rho(P_{T^*}|T|)\leq 1$. So that, as $\mathcal{R}(|T|)=\mathcal{R}(P_{T^*})$,  by Lemma \ref{BLT} we get that $|T|\leq P_{T^*}$. Therefore, $U_T^*V\in  M^n_+(\mathbb{C})$. 

The last part follows from Proposition \ref{equiTdaggerV}.

\end{proof}

The remain of this section is devoted to study proper splittings for normal matrices and for matrices that can be factorized as the product of two orthogonal projections. 

To this end, we denote by $\mathcal{P}\cdot \mathcal{P}=\{T\in M^{n}(\mathbb{C}): T=P_1P_2 \; \text{with} \; P_1,P_2\in\cP\}$, where $\cP=\{Q\in M^{n}(\mathbb{C}): Q=Q^2=Q^*\}$. This set was studied by Corach and Maestripieri in \cite{MR2775769} in the more general context of bounded linear operators defined on a Hilbert space. In the finite dimensional case, they proved that if $T\in \mathcal{P}\cdot \mathcal{P}$ then  $T=P_TP_{T^*}$ and $\mathcal{R}(T)\dot+\mathcal{N}(T)=\mathbb{C}^n$ (see \cite[Theorem 3.2]{MR2775769}). In such case, notice that the projection $Q:=Q_{\mathcal{R}(T)//\mathcal{N}(T)}$ is well-defined  and it can be used to define a proper splitting of $T$, $T=Q-V_1.$ In the next proposition we study the convergence of this proper splitting and we compare it with  $T=U_T-V$.

\begin{pro}\label{PP}
Let $T\in \mathcal{P}\cdot\mathcal{P}$ and  $Q=Q_{\mathcal{R}(T)//\mathcal{N}(T)}$. Then the following assertions hold:
\begin{enumerate}
\item The proper splitting $T=Q-V_1$ is convergent.
\item  The proper splitting $T=U_T-V$ is convergent.
\item $\rho(U_T^*V)\leq \rho(Q^\dagger V_1)<1$.
\end{enumerate}
\end{pro}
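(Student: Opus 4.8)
The plan is to exploit the special structure available for $T\in\mathcal{P}\cdot\mathcal{P}$, namely that $T=P_TP_{T^*}$, $\mathcal{R}(T)\dot{+}\mathcal{N}(T)=\mathbb{C}^n$, and that both $Q=Q_{\mathcal{R}(T)//\mathcal{N}(T)}$ and $U_T$ give proper splittings. For the first two items I would appeal to the convergence criterion already established: by Proposition \ref{equiTdaggerV} (or Proposition \ref{polar splitting}), convergence of a proper splitting $T=U-V$ is guaranteed once one knows $0\leq U^\dagger V\leq P_{V^*}$ or, more conveniently, that $T^\dagger V\in M^n_+(\mathbb{C})$. So the first step is to compute $V_1=Q-T=Q-P_TP_{T^*}$ and $V=U_T-T=U_T-U_T|T|=U_T(P_{T^*}-|T|)$, and to check positivity of the relevant products.

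For item \emph{b)}, the polar-decomposition splitting $T=U_T-V$ is handled directly by Proposition \ref{polar splitting}: I would verify that $\|T\|\leq 1$. Since $T=P_TP_{T^*}$ is a product of two orthogonal projections, its spectral norm satisfies $\|T\|=\|P_TP_{T^*}\|\leq\|P_T\|\,\|P_{T^*}\|=1$, so condition \emph{c)} of Proposition \ref{polar splitting} holds and the splitting converges. For item \emph{a)}, I would show $T^\dagger V_1\in M^n_+(\mathbb{C})$ and invoke Proposition \ref{equiTdaggerV}; the key computation is to identify $Q^\dagger V_1$ and $T^\dagger V_1$ using that $Q$ is the oblique projection onto $\mathcal{R}(T)$ along $\mathcal{N}(T)$, so $Q^\dagger=Q^*=P_{\mathcal{R}(T)}$ composed appropriately, and that $T^\dagger Q=(I-Q^\dagger V_1)^{-1}Q^\dagger$ by Proposition \ref{propiedades splitting}. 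The positivity should reduce to a statement about $P_{T^*}$ and $|T|$ analogous to the polar case.

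The heart of the proposition is item \emph{c)}, the speed comparison $\rho(U_T^*V)\leq\rho(Q^\dagger V_1)<1$. Here the natural tool is Theorem \ref{comp1} or Theorem \ref{comp2}, comparing the two splittings $T=U_T-V$ and $T=Q-V_1$ of the same $T$. To apply Theorem \ref{comp2} I would try to establish $0\leq T^\dagger U_T\leq T^\dagger Q\leq P_{T^*}$; to apply Theorem \ref{comp1} I would instead check $0\leq T^\dagger V\leq T^\dagger V_1$ together with the nullspace condition $\mathcal{N}(V)=\mathcal{N}(V_1)$. Both $V$ and $V_1$ satisfy $\mathcal{N}(T)\subseteq\mathcal{N}(V),\mathcal{N}(V_1)$ by Proposition \ref{propiedades splitting}\emph{a)}, and I expect the nullspaces to coincide with $\mathcal{N}(T)$ in this setting, so the structural hypothesis of Theorem \ref{comp1} should hold. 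The crux is then the ordering $T^\dagger V\leq T^\dagger V_1$, which after substituting $V=U_T(P_{T^*}-|T|)$ and $V_1=Q-P_TP_{T^*}$ should translate into a Löwner inequality comparing $|T|$ with the oblique projection $Q$ restricted to $\mathcal{R}(T^*)$; making this comparison precise, using $U_T^*=U_T^\dagger$ and the identities $U_T^*V=P_{T^*}-|T|$ from Proposition \ref{splittingpolar}, will be the main obstacle.

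The strictness $\rho(Q^\dagger V_1)<1$ follows from item \emph{a)} via Proposition \ref{equiTdaggerV}\emph{d)}, once $T^\dagger V_1\in M^n_+(\mathbb{C})$ is known, so the comparison theorem delivers both the inequality and the bound $<1$ simultaneously. I anticipate the delicate point will be verifying the hypotheses of the chosen comparison theorem cleanly — in particular confirming the exact nullspace/range coincidences and the positivity of the auxiliary products — rather than any deep new estimate, since the underlying positivity is inherited from the polar-decomposition analysis of Proposition \ref{polar splitting} and the projection identities for elements of $\mathcal{P}\cdot\mathcal{P}$.
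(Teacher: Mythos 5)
Your scaffolding is the paper's (items \emph{a)} and \emph{b)} via positivity plus Proposition \ref{equiTdaggerV}/Proposition \ref{polar splitting}, item \emph{c)} by comparing the two splittings), and your proof of \emph{b)} is complete and correct: $\|T\|=\|P_TP_{T^*}\|\leq 1$ together with Proposition \ref{polar splitting} is exactly the content of the paper's argument, which shows $|T|\leq P_{T^*}$ directly. But the two places where actual content is needed are left open, and one identity you lean on is false. For \emph{a)}, $Q^\dagger\neq Q^*$ for an oblique projection: e.g.\ $Q=\left(\begin{smallmatrix}1&1\\0&0\end{smallmatrix}\right)$ has $Q^\dagger=\frac{1}{2}Q^*$. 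The identities that make the computation go through, quoted by the paper from \cite[Theorem 4.1]{MR2653816}, are $T^\dagger=Q^*$ and $Q^\dagger=P_{T^*}P_T$. From the first, $T^\dagger V_1=T^\dagger(Q-T)=Q^*Q-P_{T^*}\geq 0$, since every idempotent satisfies $Q^*Q\geq P_{\mathcal{N}(Q)^\perp}$ (with $x_1=P_{\mathcal{N}(Q)^\perp}x$ one has $\|Qx\|^2=\|x_1\|^2+\|(Q-I)x_1\|^2\geq\|x_1\|^2$), and Proposition \ref{equiTdaggerV} finishes \emph{a)}. Your sketch gestures at ``positivity reducing to a statement about $P_{T^*}$ and $|T|$'' but never produces it.

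For \emph{c)} you name the tools and then explicitly defer the crux (``the main obstacle''), so the heart of the proposition is unproved; moreover, of your two suggested routes one fails outright. Since $T^\dagger=|T|^\dagger U_T^*=Q^*$, one computes $T^\dagger U_T=|T|^\dagger$ and $T^\dagger Q=Q^*Q=(|T|^\dagger)^2$, whose nonzero eigenvalues are $1/s$ and $1/s^2$ with $s\in(0,1]$ the nonzero singular values of $T$; these dominate $P_{T^*}$, so the hypothesis $T^\dagger U\leq P_{T^*}$ of Theorem \ref{comp2} can only hold in the degenerate case $|T|=P_{T^*}$, i.e.\ $V=0$. The Theorem \ref{comp1} route does work, but not with your justification: $\mathcal{N}(V)$ and $\mathcal{N}(V_1)$ both equal $\mathcal{N}(P_{T^*}-|T|)$, which strictly contains $\mathcal{N}(T)$ whenever $1$ is a singular value of $T$ (equivalently $\mathcal{R}(T)\cap\mathcal{R}(T^*)\neq\{0\}$); the equality $\mathcal{N}(V)=\mathcal{N}(V_1)$ that Theorem \ref{comp1} actually needs is true but must be proved, and the key ordering follows from $T^\dagger V=|T|^\dagger-P_{T^*}$ versus $T^\dagger V_1=(|T|^\dagger)^2-P_{T^*}$, both functions of $|T|$, using $1/s-1\leq 1/s^2-1$ on $(0,1]$. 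The paper bypasses the comparison theorems entirely: with $Q^\dagger=P_{T^*}P_T$ it computes $Q^\dagger V_1=P_{T^*}-P_{T^*}P_TP_{T^*}$ and $U_T^\dagger V=P_{T^*}-|T|$ where $|T|=(P_{T^*}P_TP_{T^*})^{1/2}$, and the inequality $A\leq A^{1/2}$ for $0\leq A\leq I$ gives $0\leq U_T^\dagger V\leq Q^\dagger V_1$; both matrices being Hermitian positive semidefinite, $\rho(U_T^*V)=\|U_T^*V\|\leq\|Q^\dagger V_1\|=\rho(Q^\dagger V_1)<1$, the strict bound coming from \emph{a)}. Until you supply one of these two computations, item \emph{c)} has no proof.
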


\begin{proof} 
$a)$ By Proposition \ref{equiTdaggerV}, it suffices to prove that  $T^\dagger V_1\in M^n_+(\mathbb{C})$. Now, by  \cite[Theorem 4.1]{MR2653816}, it holds that $T^\dagger =Q^*$. Then $T^\dagger V_1=T^\dagger(Q-T)=Q^*Q-P_{T^*}\geq 0$. Thus, $T^\dagger V_1\in M^n_+(\mathbb{C})$ and the splitting $T=Q-V_1$ is convergent.

$b)$ Since $T\in\mathcal{P}\cdot\mathcal{P}$ then $T^*\in\mathcal{P}\cdot\mathcal{P}$. So that $0\leq T^*T=P_{T^*} P_{T}P_{T^*}=P_{T^*}T\leq P_{T^*}$.  Therefore, $|T|\leq P_{T^*}$ and so, $U_T^*V=P_{T^*}-|T|\geq 0$. Therefore, by Proposition  \ref{polar splitting}  we get that the proper splitting $T=U_T-V$ is convergent.

$c)$ By the above items,  the proper splittings $T=Q-V_1$ and $T=U_T-V$ are convergent. As $Q^\dagger = P_{T^*}P_T$  (see \cite[Theorem 4.1]{MR2653816}) we get that
\begin{eqnarray}
Q^\dagger V_1 &=& P_{T^*}P_TV_1=P_{T^*}P_T(Q-T)\nonumber\\
&=& P_{T^*}(Q-T)= P_{T^*}-P_{T^*}P_TP_{T^*},\nonumber
\end{eqnarray}
and 
\begin{eqnarray}
U_T^\dagger V &=& U_T^\dagger (U_T-T)=P_{T^*}-|T|\nonumber\\
&=& P_{T^*}-|P_TP_{T^*}|.\nonumber
\end{eqnarray}
Since $0\leq P_{T^*}P_TP_{T^*}\leq I$ then $0\leq P_{T^*}P_TP_{T^*}\leq (P_{T^*}P_TP_{T^*})^{1/2}\leq I$ and so $P_{T^*}P_TP_{T^*}\leq |P_TP_{T^*}|$. Hence, $0\leq U_T^\dagger V\leq Q^\dagger V_1$, where $0\leq U_T^\dagger V$ holds by the proof of item b). Therefore,  $\rho(U_T^*V)\leq \rho(Q^\dagger V_1)<1$.
\end{proof}

 \begin{exa}
 Observe that the inequality $\rho(U_T^*V)\leq \rho(Q^\dagger V_1)$ can be strict. Take 
 $T=\left(\begin{array}{cc}
 1/2 & 0\\
 -1/2 & 0
 \end{array}\right)= \left(\begin{array}{cc}
 1/2 & -1/2\\
 -1/2 & 1/2
 \end{array}\right) \left(\begin{array}{cc}
 1 & 0\\
 0 & 0
 \end{array}\right)\in\mathcal{P} \cdot \mathcal{P}$.
 \end{exa}
Consider the proper splitting $T=U_T-V= \left(\begin{array}{cc}
 \sqrt{2}/2 & 0\\
 -\sqrt{2}/2 & 0
 \end{array}\right).$ Now,  since $U_T^*V=\left(\begin{array}{cc}
 \frac{2-\sqrt{2}}{2} & 0\\
 0 & 0
 \end{array}\right)$ then $\rho(U_T^*V)= \frac{2-\sqrt{2}}{2}$.  On the other hand, consider the proper splitting $T=Q-V_1=\left(\begin{array}{cc}
 1& 0\\
 -1 & 0
 \end{array}\right)-V_1$. Then, $Q^\dagger V_1=\left(\begin{array}{cc}
 1/2 & 0\\
 0 & 0
 \end{array}\right)$ and so $\rho(Q^\dagger V_1) = \frac{1}{2}$. Therefore $\rho(U_T^*V)<\rho(Q^\dagger V_1)<1$.

\

We end this section by studying proper splittings of normal matrices. More precisely, given a normal matrix $T$ we study the proper splitting $T=P_T-V_2$ and we compare it with $T=U_T-V$. 

\begin{pro}\label{normales} Let  $T\in M^{n}(\mathbb{C})$ be a normal matrix. Then, the following assertions hold:
\begin{enumerate}
\item The proper splitting $T=P_T-V_2$ is convergent if and only if $\|P_T-T\|<1$.
\item If $T\in M_H^{n}(\mathbb{C})$, $P_T-|T|\in M_+^n(\mathbb{C})$ and  $\|P_T-T\|<1$ then the proper splittings  $T=P_T-V_2$ and  $T=U_T-V$ are convergent and $\rho(U_T^*V)\leq\rho(V_2)<1$.
\end{enumerate}  
\end{pro}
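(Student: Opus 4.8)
The plan is to reduce everything to the elementary spectral structure of a normal matrix and to the convergence criterion already proved. First I would record the basic facts: since $T$ is normal, $\mathcal{R}(T)=\mathcal{R}(T^*)$ and $\mathcal{N}(T)=\mathcal{N}(T^*)$, so that $P_T=P_{T^*}$, the orthogonal projection $P_T$ commutes with both $T$ and $T^*$, and $P_TT=TP_T=T$. Consequently $T=P_T-V_2$ is a proper splitting (here $\mathcal{R}(P_T)=\mathcal{R}(T)$ trivially, while $\mathcal{N}(P_T)=\mathcal{R}(T)^\bot=\mathcal{N}(T^*)=\mathcal{N}(T)$) with $V_2=P_T-T$ and ``$U$-matrix'' $P_T$, whose Moore--Penrose inverse is $P_T$ itself. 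Hence $U^\dagger V_2=P_T(P_T-T)=P_T-T=V_2$, and by Corollary \ref{corrho} the splitting converges if and only if $\rho(V_2)=\rho(P_T-T)<1$.

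For item (a) the only extra ingredient is that $P_T-T$ is normal: it is a difference of members of the commuting family $\{P_T,T,T^*\}$, hence it commutes with its adjoint $P_T-T^*$. For a normal matrix spectral radius and spectral norm coincide, so $\rho(P_T-T)=\|P_T-T\|$, and the convergence criterion $\rho(P_T-T)<1$ becomes exactly $\|P_T-T\|<1$, as claimed.

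For item (b) I would first dispatch the two convergence statements. The splitting $T=P_T-V_2$ converges by item (a), since $\|P_T-T\|<1$ is assumed (and then $V_2=P_T-T$ is Hermitian with $\rho(V_2)=\|P_T-T\|<1$). For $T=U_T-V$ note $U_T^*V=P_{T^*}-|T|=P_T-|T|$, which is the computation already made in Proposition \ref{splittingpolar}; this matrix is positive semidefinite by hypothesis, so in particular $|T|\le P_T$ gives $\|T\|=\||T|\|\le\|P_T\|=1$, and Proposition \ref{polar splitting} yields convergence. It remains to compare the two spectral radii. Using that $T$ is Hermitian, both $U_T^*V=P_T-|T|$ and $V_2=P_T-T$ are Hermitian and commute with $T$; exploiting $P_T|T|=|T|$, $P_TT=T$ and $|T|^2=T^2$ one computes
\begin{equation*}
(P_T-T)^2-(P_T-|T|)^2=2(|T|-T)\ge 0 .
\end{equation*}
Since $|T|\ge T$ for Hermitian $T$, this gives $0\le (P_T-|T|)^2\le (P_T-T)^2$; monotonicity of the spectral norm on the positive semidefinite cone then yields $\|P_T-|T|\|^2\le\|P_T-T\|^2$. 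As both matrices are Hermitian, $\rho=\|\cdot\|$, whence $\rho(U_T^*V)=\|P_T-|T|\|\le\|P_T-T\|=\rho(V_2)<1$.

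The commutation identities are routine; the one genuinely delicate step is the comparison in (b). The trick will be to pass to squares, so as to exploit the operator inequality $|T|\ge T$ together with the monotonicity of the norm on the positive cone, rather than trying to order $P_T-|T|$ and $P_T-T$ directly (these are \emph{not} comparable in the L\"owner order in general). An equivalent route, if one prefers to work diagonally, is to invoke the spectral decomposition of the normal $T$ and the scalar inequality $|1-|\lambda||\le|1-\lambda|$ valid for every real eigenvalue $\lambda$, which reproduces the same bound on the spectral radii.
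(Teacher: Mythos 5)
Your proof is correct, and for item (a) and for both convergence claims in (b) it runs essentially along the paper's own lines: the iteration matrix is $P_T^\dagger V_2=P_TV_2=V_2$, which is normal, so $\rho(V_2)=\|P_T-T\|$ gives (a); and $P_T-|T|\geq 0$ forces $|T|\leq P_T$, hence $\|T\|\leq 1$, so Proposition \ref{polar splitting} yields convergence of $T=U_T-V$. The one step where you genuinely deviate is the comparison in (b), and there your stated motivation is mistaken even though your computation is valid. You assert that $P_T-|T|$ and $P_T-T$ are ``not comparable in the L\"owner order in general,'' but in the setting of (b) the matrix $T$ is Hermitian, and then $T\leq |T|$ always holds (diagonalize: $|\lambda|-\lambda\geq 0$ for every real eigenvalue), so that
\begin{equation*}
0\leq P_T-|T|\leq P_T-T
\end{equation*}
directly; this is exactly the paper's one-line argument, which concludes $\rho(U_T^*V)=\|P_T-|T|\|\leq\|P_T-T\|=\rho(V_2)$ by monotonicity of the norm on the positive semidefinite cone. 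Your detour through squares, $(P_T-T)^2-(P_T-|T|)^2=2(|T|-T)\geq 0$ (using $P_TT=T$, $P_T|T|=|T|$, $|T|^2=T^2$), together with $\|A^2\|=\|A\|^2$ for Hermitian $A$, is sound, and it even has the mild advantage of not using the hypothesis $P_T-|T|\in M^n_+(\mathbb{C})$ at this particular step (that hypothesis is still needed for the convergence of the polar splitting); your alternative eigenvalue route via the scalar bound $|1-|\lambda||\leq |1-\lambda|$ for real $\lambda$ is also fine and is arguably the shortest argument of all. But you should strike the parenthetical claim that the direct L\"owner comparison fails: it does not fail here, and the paper's proof consists precisely of that comparison.
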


\begin{proof}
 $a)$ Observe that if $T$ is normal then  $V_2=P_T-T$ is also normal. Therefore, $\rho(P_TV_2)=\rho(V)=\|P_T-T\|$. Thus,  $T=P_T-V$ is convergent if and only if $\|P_T-T\|<1$.

 $b)$ Let $T\in M_H^n(\mathbb{C})$ and consider the proper splittings $T=P_T-V=U_T-V_1$. Since $\|P_T-T\|<1$ and $U_T^*V=P_T-|T|\in M_+^n(\mathbb{C})$ then both splittings are convergent.  Moreover, as $T\in M_H^n(\mathbb{C})$ then $T\leq |T|$. Thus,  $0\leq U_T^*V=U_T^*(U_T-T)=P_T-|T|\leq P_T-T=V_2=P_TV_2$. Therefore, $\rho(U_T^*V)=\|P_T-|T|\|\leq \|P_T-T\|=\rho(V_2)<1$. 
\end{proof}

Given a normal matrix $T\in M^{n}(\mathbb{C})$, then a natural proper splitting to be considered is: $T=|T|-V_3$.  However, it can be proved that if $\rho(|T|^\dagger V_3)<1$ then $T\in M^n_+(\mathbb{C})$. Thus, $T=|T|$ and the proposed splitting has not any sense.

%
%
%

\section{Conclusions}\label{conclusion}
Proper splitting of matrices was introduced by Berman and Plemmons with the aim of developing an iterative process to get the best least square approximate solution of $Tx = w$.
In this article we extended the usefulness of proper splittings by proposing an iterative process to approximate a reduced solution of the matrix equation $TX=W.$ We observed that given a proper splitting $T=U-V\in M^{m\times n}(\mathbb{C})$, the proposed iterative process converges if and only if $\rho(U^\dagger V)<1$. Moreover, we provided equivalent conditions to $\rho(U^\dagger V)<1$ and some speed comparison results by using the L\"owner order of matrices. Furthermore, we presented and studied some examples of proper splittings. We believe that these results and examples extend the knowledge about splitting of matrices and its applications. In addition,  we think that this new approach leaves room for additional research on this topic.    

%
%

\end{document}